\documentclass[12pt,reqno]{amsart}

\newcommand\version{December 17, 2024}


\usepackage{amsmath,amsfonts,amsthm,amssymb,amsxtra}
\usepackage{bbm} 
\usepackage{hyperref}	


\usepackage{xfrac}
\usepackage{todonotes, relsize, pgfplots, tikz, float}
\usetikzlibrary{calc,decorations.markings}
\tikzset{
	> = stealth,
	every pin/.style = {pin edge = {}},
	flow/.style = {decoration = {markings, mark=at position #1 with {\arrow{>}}},
		postaction = {decorate}
	},
	flow/.default = 0.5,
	main/.style = {color=#1, line width=0.5pt, line cap=round, line join=round},
	main/.default = black,
	fontscale/.style={font=\relsize{#1}},
}


\setlength{\voffset}{-.7truein}
\setlength{\textheight}{8.8truein}
\setlength{\textwidth}{6.7truein}
\setlength{\hoffset}{-.85truein}


\newtheorem{theorem}{Theorem}
\newtheorem{proposition}[theorem]{Proposition}
\newtheorem{lemma}[theorem]{Lemma}

\theoremstyle{definition}

\theoremstyle{remark}

\newtheorem{remarks}[theorem]{Remarks}


\numberwithin{equation}{section}


\renewcommand{\epsilon}{\varepsilon}

\newcommand{\loc}{{\rm loc}}

\newcommand{\N}{\mathbb{N}}

\renewcommand{\phi}{\varphi}
\newcommand{\R}{\mathbb{R}}

\newcommand{\Sph}{\mathbb{S}}

\newcommand{\lo}{\mathrm{lo}}
\newcommand{\me}{\mathrm{med}}
\newcommand{\hi}{\mathrm{hi}}

\usepackage{mathtools, todonotes, thmtools}\usepackage{tikz}
\usetikzlibrary{datavisualization}
\usetikzlibrary{datavisualization.formats.functions}
\usetikzlibrary{patterns}

\setcounter{tocdepth}{2}
\setcounter{secnumdepth}{3}
\let\oldtocsection=\tocsection

\let\oldtocsubsection=\tocsubsection

\let\oldtocsubsubsection=\tocsubsubsection

\renewcommand{\tocsection}[2]{\hspace{0em}\oldtocsection{#1}{#2}}
\renewcommand{\tocsubsection}[2]{\hspace{1em}\oldtocsubsection{#1}{#2}}
\renewcommand{\tocsubsubsection}[2]{\hspace{2em}\oldtocsubsubsection{#1}{#2}}

\usepackage{todonotes}
\setuptodonotes{color=white, bordercolor=white, textcolor=blue, size=\normalsize}
\setlength{\marginparwidth}{1.8cm}

\newif\ifTodo
\Todotrue


\begin{document}

	\begin{titlepage}
		\huge \title[Quantitative $\sigma_2$-curvature inequality]{The sharp $\sigma_2$-curvature inequality on the sphere\\ in quantitative form}
		\vspace{7cm}
	\end{titlepage}
		
	\author{Rupert L. Frank}
	\address[Rupert L. Frank]{Mathe\-matisches Institut, Ludwig-Maximilians-Universit\"at M\"unchen, The\-resienstr.~39, 80333 M\"unchen, Germany, and Munich Center for Quantum Science and Technology, Schel\-ling\-str.~4, 80799 M\"unchen, Germany, and Mathematics 253-37, Caltech, Pasa\-de\-na, CA 91125, USA}
	\email{r.frank@lmu.de}
	
	\author{Jonas W.~Peteranderl}
	
	\address[Jonas W.~ Peteranderl]{Mathematisches Institut, Ludwig-Maximilians-Universit\"at M\"unchen, Theresienstr.~39, 80333 M\"unchen, Germany}	\email{peterand@math.lmu.de}
	
	\begin{abstract}
		Among all metrics on $\mathbb S^d$ with $d>4$ that are conformal to the standard metric and have positive scalar curvature, the total $\sigma_2$-curvature, normalized by the volume, is uniquely (up to M\"obius transformations) minimized by the standard metric. We show that if a metric almost minimizes, then it is almost the standard metric (up to M\"obius transformations). This closeness is measured in terms of Sobolev norms of the conformal factor, and we obtain the optimal stability exponents for two different notions of closeness. This is a stability result for an optimization problem whose Euler--Lagrange equation is fully nonlinear.
\end{abstract}

\date{\version}
\thanks{\copyright\, 2024 by the authors. This paper may be reproduced, in its entirety, for non-commercial purposes.\\
Partial support through US National Science Foundation grant DMS-1954995 (R.L.F.), as well as through the German Research Foundation through EXC-2111-390814868 (R.L.F.) and TRR 352-Project-ID 470903074 (R.L.F.~\&~J.W.P.) is acknowledged.}
	
	\maketitle
	\setcounter{page}{1}
	
\section{Introduction and main result}
	
\subsection{The sharp $\sigma_2$-curvature inequality of Guan--Viaclovsky--Wang} 

We consider Riemannian metrics $g$ on the $d$-dimensional unit sphere $\Sph^d$, $d\geq 5$, that are conformally equivalent to the standard metric $g_*$ of constant sectional curvature one. We are interested in their $\sigma_1$- and $\sigma_2$-curvatures, which are defined, in terms of their Ricci curvature $\operatorname{Ric}^{g}$ and their scalar curvature $R^g$, by
$$
\sigma^g_1\coloneqq \frac{1}{2(d-1)} \, R^g
\qquad\text{and}\qquad
\sigma^g_2\coloneqq \frac{1}{2(d-2)^2}\left(\frac{d}{4(d-1)} (R^{g})^2-|\operatorname{Ric}^g|^2\right).
$$
These are particular cases of $\sigma_k$-curvatures, $k=1,\ldots,d$, which were introduced by Viaclovsky \cite{Viaclovsky2000} via elementary symmetric functions of the Schouten tensor.

Of fundamental importance in conformal geometry is the Yamabe (or Einstein--Hilbert) functional 
	$$
	\mathcal F_1[g] := \frac{1}{\operatorname{vol}(g)^{\frac{d-2}{d}}}\int_{\Sph^d} \sigma_1^{g} \, \mathrm d\operatorname{vol}_g \,.
	$$
	In the present paper we study the $\sigma_2$-analogue of the Yamabe functional, given by 
	$$
	\mathcal F_2[g] := \frac{1}{\operatorname{vol}(g)^{\frac{d-4}{d}}}\int_{\Sph^d} \sigma_2^{g} \, \mathrm d\operatorname{vol}_g \,.
	$$
	
An important feature of both functionals is their conformal invariance. That is, if $\Psi$ is a M\"obius transformation (that is, a conformal diffeomorphism of $(\Sph^d,g_*)$), then
$$
\mathcal F_1[\Psi^*g] = \mathcal F_1[g]
\qquad\text{and}\qquad
\mathcal F_2[\Psi^*g] = \mathcal F_2[g] \,.
$$

We are interested in minimizing the functionals $\mathcal F_1$ and $\mathcal F_2$ over certain classes of metrics on $\Sph^d$. A well-known result, due to Rodemich \cite{Rodemich1966}, Aubin \cite{Aubin1976}, and Talenti \cite{Talenti1976} (and, up to the existence of an optimizer, also Obata \cite{Obata1971}), states that among all metrics on $\Sph^d$ that are conformal to $g_*$ the functional $\mathcal F_1$ attains its minimum precisely at those metrics that are obtained from $g_*$ by a M\"obius transformation. In particular, we have the sharp inequality
	\begin{equation}
		\label{eq:sharp1intro}
		\frac{1}{\operatorname{vol}(g)^{\frac{d-2}{d}}}\int_{\Sph^d} \sigma_1^{g} \, \mathrm d\operatorname{vol}_g \geq S_d^{(1)}
	\end{equation}
	for any metric $g$ on $\Sph^d$ that is conformally equivalent to $g_*$, where
	$$
	S_d^{(1)} := \frac{1}{\operatorname{vol}(g_*)^{\frac{d-2}{d}}}\int_{\Sph^d} \sigma_1^{g_*} \, \mathrm d\operatorname{vol}_{g_*} = \frac d2 \, |\Sph^d|^\frac 2d
	$$
	with $\operatorname{vol}(g_*)=|\Sph^d|$ and $\sigma_1^{g_*}=d/2$. It is well known and often referred to as Yamabe inequality. As will be recalled below, \eqref{eq:sharp1intro} is in fact a restatement of the sharp Sobolev inequality.
	
	The corresponding result is valid for $\mathcal F_2$ as well and is due to Guan, Viaclovsky, and Wang \cite{Guan2003b}. Namely, among all metrics on $\Sph^d$ that are conformally equivalent to $g_*$ and have positive scalar curvature the functional $\mathcal F_2$ attains its minimum precisely at those metrics that are obtained from $g_*$ by a M\"obius transformation. In particular, we have the sharp inequality
	\begin{equation}
		\label{eq:sharpintro}
		\frac{1}{\operatorname{vol}(g)^{\frac{d-4}{d}}}\int_{\Sph^d} \sigma_2^{g} \, \mathrm d\operatorname{vol}_g \geq S_d^{(2)}
	\end{equation}
	for any metric $g$ on $\Sph^d$ that is conformally equivalent to $g_*$ and has positive scalar curvature, where
	$$
	S_d^{(2)} := \frac{1}{\operatorname{vol}(g_*)^{\frac{d-4}{d}}}\int_{\Sph^d} \sigma_2^{g_*} \, \mathrm d\operatorname{vol}_{g_*} = \frac{d(d-1)}{8} \, |\Sph^d|^\frac 4d
	$$ with $\sigma_2^{g_*}=d(d-1)/8$. Originally, this theorem was proved for metrics having, in addition, positive $\sigma_2$-curvature, but using a result of Ge and Wang \cite{Ge2013} (see inequality \eqref{eq:gewang} below) this additional constraint can be omitted. We also refer to the recent proof of the sharp $\sigma_2$-curvature inequality due to Case \cite{Case2020}, which extends a method of \cite{Frank2012a,Frank2012b} to the fully nonlinear setting; see also \cite{Case2020a}.

In the present paper we are interested in the \emph{stability} of the sharp inequality \eqref{eq:sharpintro}. That is, we consider positive scalar curvature metrics $g$ on $\Sph^d$ that are conformally equivalent to $g_*$ and investigate to which extent closeness of $\mathcal F_2[g]$ to $S_d^{(2)}$ implies closeness of $g$ to $\Psi^*g_*$ for some M\"obius transformation $\Psi$. Of course, we still need to specify the notion of closeness that we are using.
	
	The corresponding question for the $\mathcal F_1$-minimization problem has been asked, in an equivalent form, by Brezis and Lieb \cite{BREZIS198573} and was answered in a celebrated paper of Bianchi and Egnell \cite{Bianchi1991}.
	
	In order to formulate the stability result of Bianchi and Egnell, and ours as well, we phrase the optimization problems for $\mathcal F_1$ and $\mathcal F_2$ in an analytic manner. The metric $g$ that is conformally equivalent to $g_*$ can be written as a positive, smooth function times $g_*$. We choose two different parametrizations in the two different cases, namely,
	$$
	g = w^\frac{4}{d-2} g_*
	\qquad\text{and}\qquad
	g = u^\frac{8}{d-4} g_*
	$$
	in the $\mathcal F_1$- and $\mathcal F_2$-problems, respectively. Here $w$ and $u$ are smooth, positive functions on $\Sph^d$. Then
	$$
	\operatorname{vol}\left(w^\frac{4}{d-2} g_*\right) = \int_{\Sph^d} w^\frac{2d}{d-2}\,\mathrm d\omega
	\qquad\text{and}\qquad
	\operatorname{vol}\left(u^\frac{4}{d-4} g_*\right) = \int_{\Sph^d} u^\frac{4d}{d-4}\,\mathrm d\omega \,,
	$$
	where $\mathrm d\omega$ denotes the volume form on $\Sph^d$ with respect to the standard metric $g_*$. Moreover, it is well known that
	$$
	\int_{\Sph^d} \sigma_1^{w^\frac{4}{d-2} g_*} \, \mathrm d\operatorname{vol}_{w^\frac{4}{d-2} g_*} = \frac{2}{d-2} \int_{\Sph^d} \left( |\nabla w|^2 + \frac{d(d-2)}{4} w^2 \right) \mathrm d\omega \,,
	$$
	where $\nabla$ denotes covariant differentiation. Thus, 
	$$
	\mathcal F_1[w^\frac{4}{d-2} g_*] = \frac{2}{d-2} \frac{\int_{\Sph^d} \left( |\nabla w|^2 + \frac{d(d-2)}{4} w^2 \right) \mathrm d\omega}{\left( \int_{\Sph^d} w^\frac{2d}{d-2}\,\mathrm d\omega \right)^\frac{d-2}d} =: F_1[w] \,,
	$$
	and \eqref{eq:sharp1intro} is equivalent to the sharp Sobolev inequality
	$$
	F_1[w] \geq S_d^{(1)} \,.
	$$
	The conformal invariance of the $\mathcal F_1$-minimization problem means that, if for a function $w$ on $\Sph^d$ and a M\"obius transformation $\Psi$ with Jacobian denoted by $J_\Psi$ we set
	\begin{equation}\label{eq:conf2}
		[w]_\Psi\coloneqq J_\Psi^\frac{d-2}{2d}\ w\circ \Psi\,,
	\end{equation}
	then
	$$
	F_1[[w]_\Psi] = F_1[w] \,.
	$$
	The stability theorem of Bianchi and Egnell \cite{Bianchi1991} states that there is a constant $c_d>0$ such that
	\begin{equation}
		\label{eq:beintro}
		F_1[w] - S_d^{(1)} \geq c_d \inf_{ \lambda, \Psi}\|\lambda [w]_{\Psi}-1\|_{W^{1,2}(\mathbb S^{d})}^2 \,,
	\end{equation}
	where the infimum is taken over all $\lambda\in \R$ and all M\"obius transformations $\Psi$. Note that $\lambda [w]_{\Psi}=1$ for some $\lambda$ and $\Psi$ if and only if $F_1[w] = S_d^{(1)}$. In this sense, inequality \eqref{eq:beintro} gives a quantitative form of the sharp Sobolev inequality. Moreover, the right side shares the same conformal invariance as the left side. (A translation to $\Sph^d$ of the inequality in \cite{Bianchi1991} gives a slightly different but equivalent inequality. For our discussion here, the above form seems more natural.)
	
The right side of inequality \eqref{eq:beintro} is optimal with respect to the exponent $2$, in the sense that the inequality does not hold with an exponent that is smaller than $2$.
	
	\emph{Our main result in this paper is an analogue of inequality \eqref{eq:beintro} for the $\mathcal F_2$-minimization problem.} We first express the total $\sigma_2$-curvature of $g$ in terms of the function $u$ such that $g= u^\frac{8}{d-4}g_*$. We define
	\begin{equation*}
		\sigma_1(u)\coloneqq -\frac{d-4}{8}\Delta (u^2)-|\nabla u|^2+\frac{d}{2} \left(\frac{d-4}{4}\right)^2u^2
	\end{equation*}
	and note that, by a short computation,
	\begin{equation*}
		\sigma_1(u) = \left(\frac{d-4}{4}\right)^2u^{\frac{2d}{d-4}} \ \sigma_1^{u^\frac8{d-4} g_*}\,.
	\end{equation*}
	Next, we set
	\begin{equation}\label{eq:density}
		e_2 (u)\coloneqq \left(\frac{4}{d-4}\right)^3 \left(\sigma_1(u)+\frac{1}{2} |\nabla u|^2+\frac{d-2}{2}\left(\frac{d-4}{4}\right)^2u^2\right) |\nabla u|^2+\frac{d(d-1)}{8} u^4 \,.
	\end{equation}	
	It was observed by Case \cite{Case2020} that
	$$
	\int_{\Sph^d} \sigma_2^{u^\frac 8{d-4}g_*}\, \mathrm d \operatorname{vol}_{u^\frac 8{d-4}g_*} = \int_{\Sph^d} e_2(u)\,\mathrm d\omega \,.
	$$
	We emphasize that we do \emph{not} claim that $e_2(u)$ is equal to $u^{-\frac{4d}{d-4}} \ \sigma_2^{u^\frac 8{d-4}g_*}$. This identity holds only up to terms that vanish when integrated over $\Sph^d$. The advantage of the `energy density' $e_2(u)$ is that it is a sum of nonnegative terms. 
	
	This discussion shows that
	$$
	\mathcal F_2[u^\frac 8{d-4}g_*] = \frac{\int_{\Sph^d} e_2(u)\,\mathrm d\omega}{\left( \int_{\Sph^d} u^\frac{4d}{d-4}\,\mathrm d\omega \right)^\frac{d-4}d} =: F_2[u] \,,
	$$
	and that the sharp inequality \eqref{eq:sharpintro} is equivalent to the sharp Sobolev-type inequality
	\begin{equation}
		\label{eq:sharpintrofcn}
		F_2[u] \geq S_d^{(2)} \,,
	\end{equation}
	valid for all positive, smooth functions $u$ on $\Sph^d$ with $\sigma_1(u)> 0$.
	
	As a final preliminary, for a M\"obius transformation $\Psi$ of $\Sph^d$ we set	
	\begin{equation*}
		(u)_{\Psi}\coloneqq J_\Psi^{\frac{d-4}{4d}} \ u\circ \Psi\,.
	\end{equation*}
	This is similar but different from definition \eqref{eq:conf2}. Then the conformal invariance of the $\mathcal F_2$-minimization problem implies that
	$$
	F_2[(u)_\Psi] = F_2[u] \,.
	$$
	Moreover, the assertion that equality in \eqref{eq:sharpintro} holds precisely for metrics that are obtained from $g_*$ by M\"obius transformations is equivalent to the assertion that equality in \eqref{eq:sharpintrofcn} holds if and only if $\lambda (u)_\Psi=1$ for some $\lambda\in\R$ and some M\"obius transformation $\Psi$.
	
	
\subsection{Our main stability result}

We are now ready to formulate our main result, which states that if $g$ is a positive scalar curvature metric on $\Sph^d$ that is conformal to $g_*$ and has $\mathcal F_2[g]$ close to $\mathcal F_2[g_*]$, then $g$ or a M\"obius transformation thereof is close to $g_*$ in a quantitative sense. We phrase this in the functional reformulation \eqref{eq:sharpintrofcn} of \eqref{eq:sharpintro}.

\begin{theorem}[Quantitative stability]\label{thm}
	Let $d>4$. Then there is a constant $c_d>0$ such that for all $u\in C^\infty(\mathbb S^d)$ with $u>0$ and $\sigma_1(u)>0$ we have
	\begin{equation}
		\label{eq:quantstab}
		F_2[u] - S_d^{(2)} \geq c_d
		\inf_{\lambda,\Psi} \left( \|\lambda \, (u)_{\Psi}-1\|_{W^{1,2}(\mathbb S^{d})}^2 + \|\lambda \, (u)_{\Psi}-1\|_{W^{1,4}(\mathbb S^{d})}^4 \right),
	\end{equation}
	where the infimum is taken over all $\lambda\in \R$ and M\"obius transformations $\Psi:\mathbb S^d\to\mathbb S^d$.
\end{theorem}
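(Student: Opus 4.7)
The plan is to adapt the Bianchi-Egnell strategy to this fully nonlinear setting, combining a local stability analysis near the manifold of extremals with a global-to-local reduction by compactness. By the conformal invariance of both sides of \eqref{eq:quantstab}, one may choose $\lambda\in\R$ and $\Psi$ that nearly optimize the right-hand side, and set $v := \lambda(u)_{\Psi}$. A standard M\"obius gauge-fixing argument further permits the normalizations $\int_{\Sph^d} v^{4d/(d-4)}\,\mathrm d\omega = |\Sph^d|$ and $\int_{\Sph^d} X\, v^{4d/(d-4)}\,\mathrm d\omega = 0$, where $X$ denotes the position vector of $\Sph^d\subset\R^{d+1}$. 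Together these conditions kill the scaling and M\"obius directions, which span the kernel of the Hessian of $F_2$ at $v\equiv 1$.

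For the local part, write $v = 1 + \phi$. Because $e_2(u)$ is a polynomial of degree exactly four in $u$ and its derivatives, Taylor expansion yields an exact decomposition
\[
F_2[1+\phi] - S_d^{(2)} = Q_2[\phi] + C[\phi] + Q_4[\phi] + R[\phi],
\]
where $Q_2$ is quadratic, $C$ cubic, $Q_4$ quartic, and $R$ collects the terms from the geometric-series expansion of the quartic denominator $\bigl(\int v^{4d/(d-4)}\,\mathrm d\omega\bigr)^{(d-4)/d}$. Diagonalizing $Q_2$ on spherical harmonics shows that its kernel is precisely $\mathrm{span}\{1,X_1,\ldots,X_{d+1}\}$ and that its first nonzero eigenvalue is positive; together with the gauge-fixing this yields $Q_2[\phi]\geq \gamma_d\|\phi\|_{W^{1,2}(\Sph^d)}^2$. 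The quartic piece $Q_4$ contains the genuinely fully nonlinear $|\nabla\phi|^4$ contribution from the $|\nabla u|^4$ term of $e_2(u)$, and after integrating by parts mixed terms such as $\int\phi\Delta\phi|\nabla\phi|^2\,\mathrm d\omega$ one obtains $Q_4[\phi]\geq \gamma_d\|\phi\|_{W^{1,4}(\Sph^d)}^4$, at the cost of an arbitrarily small multiple of $Q_2[\phi]$. The cubic and higher-order terms $C[\phi]+R[\phi]$ are absorbed via H\"older-Young estimates provided $\|\phi\|_{W^{1,2}\cap W^{1,4}}$ is small. This establishes the local version of \eqref{eq:quantstab}.

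To reach the global statement I argue by contradiction. Assume \eqref{eq:quantstab} fails along a sequence $u_n$ with $\delta_n := F_2[u_n]-S_d^{(2)}\to 0$, and let $v_n := \lambda_n(u_n)_{\Psi_n}$ be the gauge-fixed sequence, so $F_2[v_n]\to S_d^{(2)}$. The main obstacle, and the most delicate part of the proof, is to show that $v_n\to 1$ strongly in $W^{1,2}(\Sph^d)\cap W^{1,4}(\Sph^d)$, as then the local result forces a contradictory upper bound on the distance. Since $F_2$ is critical under the M\"obius group, $(v_n)$ could a priori develop bubbles, but the centering $\int X v_n^{4d/(d-4)}\,\mathrm d\omega=0$ together with a Struwe-type profile decomposition adapted to $F_2$ rules this out. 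Maintaining the positivity constraint $\sigma_1(v_n)>0$ in the limit relies on the Ge-Wang inequality, which controls the Dirichlet energy of $v_n$ by the $\sigma_2$-energy. By the Guan-Viaclovsky-Wang rigidity the weak limit must be constant, and by the first normalization it equals $1$. Strong convergence in both $W^{1,2}$ and $W^{1,4}$ simultaneously follows from the convergence $\int e_2(v_n)\,\mathrm d\omega\to\int e_2(1)\,\mathrm d\omega$, using that $e_2$ dominates both $|\nabla v_n|^2$ (through its $\sigma_1(v_n)|\nabla v_n|^2$ component, once $\sigma_1(v_n)\geq c>0$ is secured) and $|\nabla v_n|^4$ (through its quartic gradient piece). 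This dual coercivity of the energy density $e_2$ is the structural reason the natural stability statement involves the double-norm $W^{1,2}\cap W^{1,4}$.
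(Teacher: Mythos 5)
Your proposal correctly identifies the overall Bianchi--Egnell architecture (global-to-local reduction plus local Taylor expansion near the extremal $1$), the kernel of the Hessian, the role of the Ge--Wang inequality, and the structural reason for the double-norm stability, namely the coexistence of $|\nabla u|^2$ and $|\nabla u|^4$ terms in the energy density $e_2$. However, the two places where you gloss over the difficulty are precisely where the new ideas of the paper live, and the gaps there are genuine.

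\textbf{The cubic term cannot be absorbed by a plain H\"older--Young argument.} The cubic piece of the expansion, essentially $\int r|\nabla r|^2\,\mathrm d\omega$, is bounded by $\|r\|_{W^{1,2}}\|r\|_{W^{1,4}}^2$. To absorb this into $\gamma_d\|r\|_{W^{1,2}}^2 + c_d\|r\|_{W^{1,4}}^4$ by Young's inequality you need the product $\gamma_d\,c_d$ to dominate the square of the cubic constant. But the quartic coefficient from $e_2$ scales like $(d-4)^{-3}$ while the cubic coefficient is $O(1)$, so the naive absorption fails (at least for $d$ large), no matter how small $\|r\|$ is: smallness of $r$ does not help because both sides are homogeneous of the same total degree. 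The paper's remedy is a high/medium/low decomposition in spherical harmonics with a cutoff parameter $L$; for frequencies $>L$ the spectral gap of the Hessian is of order $L^2$ relative to $\|r^{\mathrm{hi}}\|_2^2$, and this excess is what absorbs the error $C\epsilon^{-1}\|r^{\mathrm{hi}}\|_2^2$ coming from Young. For frequencies $\leq L$ one uses that all norms on the finite-dimensional spherical-harmonic blocks are equivalent, so cubic and quartic corrections there are $o(\|r\|_{W^{1,2}}^2)$. Without this frequency splitting the local argument does not close.

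\textbf{The fully nonlinear term is not handled correctly.} The dangerous piece of the expansion is
$\int(\sigma_1(1+r)-\sigma_1(1))|\nabla r|^2\,\mathrm d\omega$, which after expanding $\sigma_1$ contains $\int(\Delta r)|\nabla r|^2\,\mathrm d\omega$ and $\int\Delta(r^2)|\nabla r|^2\,\mathrm d\omega$. Your claim that integrating by parts produces a term controllable by $\|r\|_{W^{1,4}}^4$ is not accurate: integration by parts moves one derivative off $\Delta r$ but leaves $\nabla|\nabla r|^2$, which again contains second derivatives; the product-rule lemma in the paper shows this can be tamed up to terms involving second derivatives of the \emph{low}-frequency (finite-dimensional, hence smooth) part, but not for the full $r$. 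The paper's actual resolution of this is different and uses the constraint $\sigma_1(u)>0$ in an essential way: one keeps $\int\sigma_1(1+r)|\nabla r^{\mathrm{hi}}|^2\,\mathrm d\omega\geq 0$ as a nonnegative term and drops it, and then the residual $-\sigma_1(1)\int|\nabla r^{\mathrm{hi}}|^2\,\mathrm d\omega$ is absorbed into the quadratic form, which requires precisely the modified spectral-gap computation on frequencies $>L$. In your proposal the constraint $\sigma_1(u)>0$ is invoked only to ``maintain positivity in the limit'' during the compactness step, which is not where it is needed; it is needed in the local step to make the Taylor expansion coercive at all.

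\textbf{The profile decomposition for $F_2$ is exactly what the paper avoids.} You invoke ``a Struwe-type profile decomposition adapted to $F_2$.'' The paper explicitly states this is not available because the pointwise constraint $\sigma_1(u)>0$ does not localize and because $\sigma_1(u)$ is not controlled by energy norms. The authors instead observe that the Ge--Wang comparison between the $\sigma_2$- and $\sigma_1$-quotients lets one transfer compactness to the semilinear Yamabe functional, where Lions's concentration-compactness is available; the $W^{1,4}$ convergence of $r_j$ then follows a posteriori from the nonnegativity of the energy density $e_2$. Treating the profile decomposition for $F_2$ as a black box is therefore not a small omission but bypasses a central obstacle.

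Finally, a smaller issue: your gauge-fixing by the $L^{4d/(d-4)}$-weighted center of mass is reasonable in principle, but the paper gauge-fixes by minimizing the $W^{1,2}$-distance and then needs a separate nontrivial lemma (comparability of the $W^{1,2}$- and $W^{1,4}$-optimal choices of $\Psi$) to conclude smallness in $W^{1,4}$. Your version would require an analogous comparability argument, which is not supplied.
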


Here $W^{1,p}=W^{1,p}(\mathbb S^d)$ denotes the Sobolev space with norm $\|\cdot\|_{W^{1,p}}\coloneqq (\|\nabla\cdot\|^p_p+\|\cdot \|_p^p)^{1/{p}}$.

\begin{remarks}
	We make a number of short remarks, which will be substantiated later.\\
	(a) As far as we know, this is the first stability result for a functional whose corresponding Euler--Lagrange equation is fully nonlinear. This should be contrasted with the stability for $\mathcal F_1$, whose Euler--Lagrange equation is semilinear, and the stability for the $p$-Sobolev inequality, whose Euler--Lagrange equation is quasilinear. The fully nonlinear nature is reflected in the appearance of the term involving $\sigma_1(u)$ in the energy density $e_2(u)$ in \eqref{eq:density}, which involves the Laplacian of $u$ and cannot be controlled by first derivatives. Also the appearance of the pointwise constraint $\sigma_1(u)>0$, involving again the Laplacian, makes \eqref{eq:quantstab} rather different from other existing stability inequalities.\\
	(b) On the right side of \eqref{eq:quantstab} two different norms appear, namely that in $W^{1,2}$ and that in $W^{1,4}$. Of course, $W^{1,4}$-stability is stronger than $W^{1,2}$-stability, but this stronger norm comes at the expense of a weaker power, namely, 4 instead of 2. Our result is optimal in the sense that $W^{1,4}$-stability does not hold with a smaller exponent than 4, and $W^{1,2}$-stability does not hold with a smaller exponent than 2. We emphasize that the two norms capture different notions of `small perturbations' of an optimizer. In particular, the scenarios showing optimality of the exponents 2 and 4 of the $W^{1,2}$ and $W^{1,4}$ are distinct. This will be further discussed in Section~\ref{sec:4}.\\
	(c) The appearance of the exponent 4 in the $W^{1,4}$ distance shares some similarities with the Sobolev inequality in the homogeneous Sobolev space $\dot W^{1,4}(\R^d)$ in quantitative form, due to Figalli and Zhang \cite{Figalli2022}. This is not a coincidence, given that the energy density $e_2(u)$ involves the term $|\nabla u|^4$. This resemblance becomes even clearer when the sharp $\sigma_2$-curvature inequality is written via stereographic projection on $\R^d$, where it becomes
	$$
	\left( \frac{4}{d-4}\right)^3 \int_{\R^d} \left( I(u) |\nabla u|^2 + \frac12 |\nabla u|^4 \right)\mathrm dx \geq S_d^{(2)} \left( \int_{\R^d} u^\frac{4d}{d-4}\,\mathrm dx \right)^\frac{d-4}{d}
	$$
	for $0<u\in C^\infty(\R^d)\cap \dot W^{1,4}(\R^d)$ such that $I(u)>0$ with
	$$
	I(u) := -\frac{d-4}8 \Delta (u^2)  - |\nabla u|^2 \,;
	$$ see \cite[Remark 2.5]{Case2020}. However, the latter inequality is conformally invariant, like the Sobolev inequality in $\dot W^{1,2}(\R^d)$ and \emph{unlike} the Sobolev inequality in $\dot W^{1,4}(\R^d)$. Our proof uses some ideas from \cite{Figalli2015,Figalli2022}, but at the same time we observe that the result in these papers can be strengthened; see Appendix \ref{app:figallizhang}. This answers a question that Robin Neumayer, to whom we are grateful, asked after a conference talk on the results in the present paper.\\
	(d) As we have noted before, the sharp $\sigma_2$-curvature inequality \eqref{eq:sharpintrofcn} is conformally invariant. The right side of \eqref{eq:quantstab} shares the same conformal invariance. This dictates our choice of the notion of distance to the family of optimizers. Among other variants that suggest themselves, the quantity $\inf_{\lambda,\Psi} \| u - \lambda (1)_\Psi\|_{W^{1,k}}$ with $k=2,4$ does not share the conformal invariance, and the quantity $\inf_{\lambda,\Psi} F_2[u-\lambda(1)_\Psi]$ is not necessarily nonnegative.\\
	(e) Our proof follows the Bianchi--Egnell strategy, that is, it consists in two steps, namely, in a global-to-local reduction and in a local analysis close to the set of optimizers. Both steps are somewhat different from other applications of this strategy. We discuss this further in Subsection \ref{sec:strategy}.
\end{remarks}


\subsection{Relation to other works}

Our analysis lies at the intersection of conformal geometry and the stability theory of functional inequalities. Let us provide some background and give some references.

\subsubsection*{Background from conformal geometry}	
Let $(M,g)$ be a Riemannian manifold of dimension $d\geq 3$ with Schouten tensor
\begin{equation*}
	A^g\coloneqq \frac{1}{d-2} \left(\operatorname{Ric}^g-\frac{1}{2(d-1)} R^g g\right).
\end{equation*}
The $\sigma_k^g$-curvature is defined as the $k$-th elementary symmetric polynomial of the eigenvalues of $A^g$ with respect to the metric $g$. In particular, for $k=1$ and $k=2$, this leads to the same formulas as at the beginning of the introduction. In the special case of $M=\Sph^d$, we have $A^{g_*}=g_*/2$, giving the values of $\sigma_1^{g_*}$ and $\sigma_2^{g_*}$ mentioned above.

Due to its connection to the celebrated Chern--Gauss--Bonnet formula in dimension $d=4$ \cite{Chang2002a} and its conformal properties \cite{Viaclovsky2000, Li2003}, the $\sigma_k^g$-curvature has been the subject of many studies over the last quarter century, in particular, in form of the $\sigma_k$-Yamabe problem, which can be formulated as follows. 
Find a metric $g$ conformal to a given metric $g_0$ such that the equation
\begin{equation}\label{eq:sigma2}
	\sigma_k^g = c
\end{equation} 
holds for some constant $c>0$. In case $k\geq 2$, this is a fully nonlinear equation for the conformal factor. As is common practice, \eqref{eq:sigma2} is considered under the constraints $\sigma_1^{g},\sigma_2^{g}>0$, which assure ellipticity of the resulting equation; we refer to  \cite{Viaclovsky2000} for more details.

For $k=1$ this is the famous Yamabe problem (in the positive case), which was solved in \cite{Yamabe1960, Trudinger1968, Aubin1976, Schoen1984}; see also the review \cite{Lee1987}. The $\sigma_2$-Yamabe problem was completely solved; see \cite{Gursky2007} for $d=3$, \cite{Chang2002, Chang2002a} for $d=4$, and \cite{Sheng2007} for $d\geq5$; see also the preliminary results in \cite{Li2003,Ge2005}. Some of these works also contain results about the $\sigma_k$-Yamabe problem, in particular when it is variational. For a review, we refer to \cite{Sheng2012}.

Analogously as for the usual Yamabe problem, in the resolution of the $\sigma_2$-Yamabe problem an important role is played by the functional $\mathcal F_2[g]$, defined for metrics $g$ on $M$ similarly as in the special case $M=\Sph^d$, and its infimum $Y_2(M,[g_0])$ over all metrics $g$ in the conformal class $[g_0]$ satisfying $\sigma_1^g>0$ and $\sigma_2^g>0$. In \cite{Sheng2007}, concerning $d\geq 5$, it is shown that the strict inequality $Y_2(M,[g_0]) < Y_2(\Sph^d,[g_*])$ implies the solvability of the $\sigma_2$-Yamabe problem, and then the strict inequality is shown whenever $(M,g_0)$ is not conformal to $(\Sph^d,g_*)$.

\subsubsection*{Background from stability of functional inequalities}
In recent years, there has been a huge interest in the problem of establishing stability estimates for functional and geometric inequalities. In the context of the Sobolev inequality in the homogeneous Sobolev space $\dot W^{1,2}(\R^d)$, the question for quantitative stability was first raised by Brezis and Lieb \cite{BREZIS198573}. They were looking for a `natural' way to bound the distance to the set of optimizers from above by the nonnegative difference between both sides of the Sobolev inequality. Bianchi and Egnell  \cite{Bianchi1991} gave an affirmative answer to this problem by showing that the $\dot W^{1,2}(\R^d)$-norm can be taken as the notion of distance and that the square of this distance can be controlled by the deficit in the Sobolev inequality. Their strategy is quite robust and has been applied to many other functional inequalities, including for instance the Sobolev inequality in $\dot W^{s,2}(\R^d)$, $s<d/2$; see \cite{Chen2013}.
	
This result has been extended to the Sobolev inequality in $\dot W^{1,p}(\R^d)$. The final answer, with the distance in $\dot W^{1,p}(\R^d)$ and with the optimal stability exponent, was obtained by Figalli and Zhang \cite{Figalli2022}, following earlier work in \cite{Cianchi2009,Figalli2015,Neumayer2019}. Interestingly, Figalli and Zhang found that the optimal stability exponent depends on $p$ and is given by $\max\{2,p\}$. This has the same reason as the appearance of the power $4$ of the $W^{1,4}$-norm in Theorem \ref{thm}.

Returning to the case $p=2$, we recall that the corresponding Sobolev inequality has the geometric meaning that among all unit-volume metrics on $\Sph^d$ that are conformal to the standard metric, the latter and its conformal images minimize the total scalar curvature. Stability for the corresponding Yamabe inequality on general closed Riemannian manifolds was shown by Engelstein, Neumayer, and Spolaor \cite{Engelstein2020}. Interestingly, they also showed the appearance of a stability exponent strictly larger than $2$. In the special case of $M=\Sph(R)\times\Sph^{d-1}$, the sharp stability exponent was found in \cite{Frank2023a} to be $4$ for the critical value of $R$; see also \cite{Frank2023} and, for another related model, \cite{Frank2024}. We emphasize that the reason for the appearance of a stability exponent different from $2$ in these cases is different from its appearance in the $p$-Sobolev inequality. In the setting of \cite{Frank2023a} the quartic behavior can be upgraded to a quadratic behavior away from a finite-dimensional subspace, as pointed out in \cite{BrDoSi}.

We think it would be interesting to investigate to which extent the stability for the Yamabe functional in \cite{Engelstein2020} has an analogue for the $\sigma_2$-curvature inequality on general closed Riemannian manifolds. This question might be related to studying the convergence rates of the $\sigma_2$-Yamabe flow. For the usual Yamabe flow, see \cite{Brendle2005,Brendle2007,Carlotto2015}.


\subsection{Strategy of the proof}\label{sec:strategy}
	
In this subsection we will describe our strategy and reduce the proof of Theorem \ref{thm} to the proof of two results, Propositions \ref{prop:glob2loc} and \ref{prop:loc} below.

Our overall strategy follows that of Bianchi and Egnell \cite{Bianchi1991} and consists of two steps. The first one is a global-to-local reduction, meaning that the inequality is valid for general functions, once it has been proved for functions close to the set of optimizers. This is the content of the upcoming Proposition \ref{prop:glob2loc}. The second step then is the proof of the inequality for functions close to the set of optimizers. This is accomplished in Proposition \ref{prop:loc}. 

The precise formulation of these results is the following.
	
\begin{proposition}\label{prop:glob2loc}
	Let $(u_j)\subset C^\infty(\Sph^d)$ be a sequence of positive functions with $\sigma_1(u_j)>0$ for all $j$ and satisfying, as $j\to\infty$, 
	$$
	F_2[u_j] \to S_d^{(2)}
	\qquad\text{and}\qquad
	\| u_j \|_{\frac{4d}{d-4}} \to \| 1 \|_{\frac{4d}{d-4}} \,.
	$$
	Then
	$$
	\inf_{\Psi} \| (u_j)_{\Psi} - 1 \|_{W^{1,4}(\Sph^d)} \to 0
	\qquad\text{as}\ j\to\infty \,.
	$$
\end{proposition}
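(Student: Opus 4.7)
My plan is to argue by contradiction along the lines of the classical Bianchi--Egnell concentration-compactness scheme, exploiting the conformal invariance of $F_2$ to renormalize the sequence. Suppose the conclusion fails, so that along a subsequence
\begin{equation*}
	\inf_{\Psi} \|(u_j)_\Psi - 1\|_{W^{1,4}(\Sph^d)} \geq \delta > 0
\end{equation*}
for some $\delta > 0$. Since $F_2[(u_j)_\Psi] = F_2[u_j]$ and $\|(u_j)_\Psi\|_{4d/(d-4)} = \|u_j\|_{4d/(d-4)}$, I am free to replace $u_j$ by any renormalization $v_j := (u_j)_{\Psi_j}$ without affecting the hypotheses, and the goal is to choose $\Psi_j$ so that a subsequence of $v_j$ converges strongly to $1$ in $W^{1,4}$.

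The first step is to produce uniform a priori bounds. Under the standing assumption $\sigma_1(u_j) > 0$, the definition \eqref{eq:density} of $e_2$ gives the pointwise inequality
\begin{equation*}
	e_2(u) \geq \tfrac{1}{2}\bigl(\tfrac{4}{d-4}\bigr)^{3} |\nabla u|^4 + c \, u^2 |\nabla u|^2 + \tfrac{d(d-1)}{8}u^4,
\end{equation*}
so that $F_2[u_j] \to S_d^{(2)}$ together with the normalization of $\|u_j\|_{4d/(d-4)}$ furnishes uniform bounds on $u_j$ in $W^{1,4}(\Sph^d) \cap L^{4d/(d-4)}(\Sph^d)$. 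I then invoke a concentration-compactness alternative of Lions type for the $L^{4d/(d-4)}$-densities, adapted to the non-compact Möbius group action on $\Sph^d$. Vanishing is incompatible with the $L^{4d/(d-4)}$-normalization. Dichotomy is ruled out by strict subadditivity: two well-separated bubbles of masses $m_1, m_2$ would force
\begin{equation*}
	F_2[v_j] \geq S_d^{(2)} \, \frac{m_1^{(d-4)/d} + m_2^{(d-4)/d}}{(m_1+m_2)^{(d-4)/d}} > S_d^{(2)}
\end{equation*}
by strict concavity of $t \mapsto t^{(d-4)/d}$, contradicting $F_2[v_j] \to S_d^{(2)}$. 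Hence, for a suitable choice of $\Psi_j$ pinned down by centering first and second moments of the density $v_j^{4d/(d-4)}$, a subsequence satisfies $v_j \rightharpoonup v$ weakly in $W^{1,4}(\Sph^d)$ with $v \not\equiv 0$.

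By Fatou, weak lower semicontinuity of the nonnegative summands of $e_2$, and Rellich compactness of $W^{1,4} \hookrightarrow L^q$ for $q < 4d/(d-4)$, one deduces $F_2[v] \leq S_d^{(2)}$ and, a posteriori via the equality case of \eqref{eq:sharpintrofcn}, $v = \lambda_\infty (1)_{\Psi_\infty}$ for some $\lambda_\infty > 0$ and some Möbius $\Psi_\infty$ (which in particular forces $\sigma_1(v) > 0$). Writing $v_j = v + r_j$ with $r_j \rightharpoonup 0$ and applying a Brezis--Lieb decomposition tailored to $e_2$, the convergence $\int e_2(v_j)\,\mathrm d\omega \to \int e_2(v)\,\mathrm d\omega$ upgrades $r_j \rightharpoonup 0$ to $r_j \to 0$ strongly in $W^{1,4}$. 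Composing $\Psi_j$ with $\Psi_\infty^{-1}$ yields a sequence $\widetilde\Psi_j$ of Möbius transformations with $(u_j)_{\widetilde\Psi_j} \to 1$ in $W^{1,4}$, contradicting $\delta > 0$.

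The main obstacle is the Brezis--Lieb decomposition of the fully nonlinear density $e_2$. The cross term $(4/(d-4))^{3}\sigma_1(u)|\nabla u|^2$ contains the contribution $-\tfrac{d-4}{8}\Delta(u^2)|\nabla u|^2$, which does not split cleanly under $u = v + r_j$ with $r_j \rightharpoonup 0$; handling it requires integration by parts and careful exploitation of the compact embeddings $W^{1,4} \hookrightarrow L^q$ for $q < 4d/(d-4)$ to absorb the mixed terms. A related subtlety is that the pointwise sign constraint $\sigma_1(v_j) > 0$ is not obviously preserved under weak $W^{1,4}$-convergence, since $\sigma_1$ involves second derivatives that are not controlled; this is resolved a posteriori once the limit is identified as a positive minimizer. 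Finally, the non-compactness of the Möbius group means the choice of $\Psi_j$ must be made quantitatively, normalizing enough moments of $v_j^{4d/(d-4)}$ to rule out both dilational concentration at a point and translational escape in stereographic coordinates.
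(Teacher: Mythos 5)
Your proposal attempts a direct concentration-compactness argument for the fully nonlinear functional $F_2$ on $W^{1,4}(\Sph^d)$. The paper explicitly identifies this route as obstructed and deliberately avoids it: the key ingredient in the paper's proof is the Ge--Wang inequality \eqref{eq:gewang}, which reduces compactness for the $\mathcal F_2$-problem to compactness for the classical Yamabe functional $\mathcal F_1$ in $W^{1,2}$, where Lions's method applies cleanly. Your sketch does not invoke Ge--Wang at all, and as a result several steps do not go through.

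Two gaps are decisive. First, your dichotomy argument asserts the subadditivity bound
$F_2[v_j] \geq S_d^{(2)} \bigl(m_1^{(d-4)/d} + m_2^{(d-4)/d}\bigr)/(m_1+m_2)^{(d-4)/d}$,
which requires applying the sharp inequality \eqref{eq:sharpintrofcn} to each localized bubble. But \eqref{eq:sharpintrofcn} is only available under the pointwise constraint $\sigma_1 > 0$, and $\sigma_1$ involves $\Delta(u^2)$: multiplying by a cutoff $\chi$ produces uncontrolled terms $\Delta\chi$, $\nabla\chi\cdot\nabla u$, so there is no reason the truncated pieces satisfy $\sigma_1(\chi v_j) > 0$. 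Second, your claim of weak lower semicontinuity of $\int e_2$ under weak $W^{1,4}$-convergence is unsubstantiated for the summand $\sigma_1(u)|\nabla u|^2$. This term contains $\Delta(u^2)|\nabla u|^2$, and weak convergence in $W^{1,4}$ gives no control whatsoever on $\Delta u_j$; the positivity of the integrand does not rescue Fatou here because the integrand is not a fixed pointwise function of $(u_j,\nabla u_j)$. These are precisely the two obstructions the paper names in Section 1.4 (``the pointwise constraint $\sigma_1(u)>0$, which seems difficult to localize'' and ``the noncontrollability of $\sigma_1(u)$ through energy norms''). You acknowledge both as ``subtleties'' to be handled ``a posteriori'' and by ``integration by parts,'' but neither is actually resolved, and they are the crux of the proposition.

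The paper's route is: (i) $F_2[u_j]\to S_d^{(2)}$ plus Ge--Wang forces $F_1[w_j]\to S_d^{(1)}$ with $w_j^{4/(d-2)}=u_j^{8/(d-4)}$; (ii) Lions for the semilinear $F_1$ gives $[w_j]_{\Psi_j}\to 1$ in $W^{1,2}$, hence $r_j=(u_j)_{\Psi_j}-1\to 0$ in $L^{4d/(d-4)}$; (iii) then, from the decomposition $e_2(1+r_j)=f_j+\tfrac{d(d-1)}{8}(1+r_j)^4$ with $f_j\geq 0$ containing $\tfrac12(\tfrac{4}{d-4})^3|\nabla r_j|^4$, one reads off $\int f_j\to 0$ and hence $\|\nabla r_j\|_4\to 0$. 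The nonnegativity of $f_j$ is used only after the $L^{4d/(d-4)}$-convergence of $r_j$ has been secured via the Yamabe reduction, not to run a concentration-compactness alternative for $F_2$ directly.
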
	

\begin{proposition}\label{prop:loc}
	There is a constant $c_d>0$ with the following property: Let $(u_j)\subset C^\infty(\Sph^d)$ be a sequence of positive functions with $\sigma_1(u_j)>0$ for all $j$, with $\| u_j \|_\frac{4d}{d-4} = \| 1\|_\frac{4d}{d-4}$ for all $j$ and with $\inf_{\Psi} \| (u_j)_{\Psi} - 1 \|_{W^{1,4}(\Sph^d)} \to 0$ as $j\to\infty$. Then
	$$
	\liminf_{j\to\infty} \frac{F_2[u_j]-S_d^{(2)}}{\inf_{\Psi} \left( \| (u_j)_{\Psi} - 1 \|_{W^{1,2}(\Sph^d)}^2 + \| (u_j)_{\Psi} - 1 \|_{W^{1,4}(\Sph^d)}^4 \right)} \geq c_d \,.
	$$
\end{proposition}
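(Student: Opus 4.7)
I would follow the local step of the Bianchi--Egnell strategy, adapted to the presence of both a quadratic Hessian and a genuine quartic gradient term in $F_2$. For each $j$, I would first select a M\"obius transformation $\Psi_j$ nearly realizing the infimum in the denominator of the quotient; by conformal invariance of $F_2$, one may then replace $u_j$ by $(u_j)_{\Psi_j}$. Writing $v_j := (u_j)_{\Psi_j}-1$, the assumption gives $\|v_j\|_{W^{1,4}(\Sph^d)}\to 0$. First-order stationarity of the infimum with respect to M\"obius transformations produces, up to small errors, the orthogonality of $v_j$ to the $(d+1)$-dimensional tangent space at $\Psi_j$ of the orbit $\Psi\mapsto (1)_\Psi$, which consists of the first spherical harmonics on $\Sph^d$. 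The normalization $\|u_j\|_{4d/(d-4)}=\|1\|_{4d/(d-4)}$, expanded to first order, additionally forces $\int_{\Sph^d}v_j\,\mathrm d\omega\to 0$, so that $v_j$ is asymptotically orthogonal to the full $(d+2)$-dimensional candidate kernel of the Hessian identified below.

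Next, I would Taylor expand $F_2[1+v]$ around $v=0$. Using the explicit formulas for $\sigma_1(1+v)$ and $(1+v)^4$, together with an expansion of the denominator $\bigl(\int_{\Sph^d}(1+v)^{4d/(d-4)}\,\mathrm d\omega\bigr)^{(d-4)/d}$, one obtains
\[
F_2[1+v]-S_d^{(2)} \;=\; Q(v) \;+\; \kappa\int_{\Sph^d}|\nabla v|^4\,\mathrm d\omega \;+\; R(v),
\]
where $Q$ is the second variation of $F_2$ at $u=1$ (a quadratic form in $v$ and $\nabla v$), $\kappa>0$ is an explicit constant arising from the $\tfrac12|\nabla u|^2$ summand inside the coefficient of $|\nabla u|^2$ in \eqref{eq:density} multiplied by the outer $|\nabla u|^2$, and $R(v)$ collects cubic and remaining quartic-or-higher terms; the linear part vanishes because $1$ is a critical point. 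Conformal and scaling invariance of $F_2$ along $\lambda(1)_\Psi$ force $\ker Q$ to contain the constants and the first spherical harmonics. A direct computation of $Q$ (using integration by parts to eliminate the $\Delta v$ coming from $\sigma_1(1+v)$) yields a quadratic form diagonal in the spherical harmonic basis, and a spectral count shows that this inclusion is an equality and that $Q(v)\geq c\,\|v\|_{W^{1,2}(\Sph^d)}^2$ on the orthogonal complement of this kernel. The Poincar\'e inequality on $\Sph^d$, applied to functions with (approximately) vanishing mean, then upgrades the quartic term to $\kappa\int_{\Sph^d}|\nabla v|^4\,\mathrm d\omega\geq c'\|v\|_{W^{1,4}(\Sph^d)}^4$.

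Finally, I would estimate the remainder $R(v_j)$. Cubic contributions such as $\int_{\Sph^d}v\,|\nabla v|^2\,\mathrm d\omega$ are controlled by H\"older's inequality through $\|v\|_{L^2}\|\nabla v\|_{L^4}^2\leq \|v\|_{W^{1,2}}\|v\|_{W^{1,4}}^2$, which by AM--GM and the $W^{1,4}$-smallness of $v_j$ is $o(1)\bigl(\|v_j\|_{W^{1,2}}^2+\|v_j\|_{W^{1,4}}^4\bigr)$. The more delicate higher-order terms, in particular those involving $\Delta v\cdot|\nabla v|^2$ produced by $\sigma_1(1+v)$, would be handled using the pointwise constraint $\sigma_1(u_j)>0$, which couples $\Delta(u_j^2)$ with $|\nabla u_j|^2$ and compensates for the embedding $W^{1,4}\hookrightarrow L^\infty$ that fails when $d\geq 5$. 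I expect the hardest part to be precisely this coordinated control of the two stability norms under the fully nonlinear constraint, together with the explicit spectral identification of $\ker Q$; with both in hand, collecting the inequalities yields the claim.
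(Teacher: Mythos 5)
Your overall structure (pick a near-optimal $\Psi_j$, write $v_j = (u_j)_{\Psi_j}-1$, derive approximate orthogonality conditions from stationarity of the infimum and from the normalization, Taylor expand $F_2[1+v]$, identify $\ker Q$, use the constraint $\sigma_1(u_j)>0$ to control the otherwise-unbounded $\Delta v\cdot|\nabla v|^2$ term) is sound and close to the paper's. However, there is one genuine gap, and it sits precisely at the crux of the problem: your treatment of the cubic terms.

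You claim that a cubic contribution such as $\int_{\Sph^d}v\,|\nabla v|^2\,\mathrm d\omega$, bounded by $\|v\|_{W^{1,2}}\,\|v\|_{W^{1,4}}^2$, is ``by AM--GM and the $W^{1,4}$-smallness of $v_j$'' equal to $o(1)\bigl(\|v_j\|_{W^{1,2}}^2+\|v_j\|_{W^{1,4}}^4\bigr)$. This is false. Young's inequality gives, for any $\lambda>0$,
\[
\|v\|_{W^{1,2}}\,\|v\|_{W^{1,4}}^2 \;\leq\; \frac{1}{2\lambda}\,\|v\|_{W^{1,2}}^2 \;+\; \frac{\lambda}{2}\,\|v\|_{W^{1,4}}^4 \,,
\]
and there is no way to make both coefficients go to zero along the sequence. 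Indeed, in the regime $\|v_j\|_{W^{1,4}}^2\sim\|v_j\|_{W^{1,2}}$ (which is exactly the scenario responsible for the sharpness of the quartic exponent, see Section~5 of the paper) the cubic term is of the \emph{same} order as the target $\|v_j\|_{W^{1,2}}^2+\|v_j\|_{W^{1,4}}^4$, not smaller. The $W^{1,4}$-smallness of $v_j$ does not help here because $\|v\|_{W^{1,4}}^2\gg\|v\|_{W^{1,4}}^4$ when $\|v\|_{W^{1,4}}\to 0$. So the cubic term must be \emph{absorbed} into the positive terms, not discarded as an error.

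The paper resolves this with a spherical-harmonic frequency decomposition $r_j = r_j^{\mathrm{lo}}+r_j^{\mathrm{med}}+r_j^{\mathrm{hi}}$ at a fixed cut-off $L$. For the medium/low part all norms are equivalent, so all higher-order terms there really are negligible; for the high-frequency part the cubic term $\mathcal I_3[r_j^{\mathrm{hi}}]\gtrsim -\epsilon\|r_j^{\mathrm{hi}}\|_{W^{1,4}}^4 - C\epsilon^{-1}\|r_j^{\mathrm{hi}}\|_2^2$ is absorbed: the $-\epsilon\|r_j^{\mathrm{hi}}\|_{W^{1,4}}^4$ into the quartic coercivity $\mathcal I_4[r_j^{\mathrm{hi}}]\gtrsim\|r_j^{\mathrm{hi}}\|_{W^{1,4}}^4$, and the $-C\epsilon^{-1}\|r_j^{\mathrm{hi}}\|_2^2$ into the excess spectral gap $\gtrsim (L+1)(L+d)\|r_j^{\mathrm{hi}}\|_2^2$ from the quadratic form at frequencies $>L$, by taking $L$ large. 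Without the high-frequency cut-off, there is no spare large factor multiplying $\|v\|_2^2$, so the $C\epsilon^{-1}\|v\|_2^2$ term cannot be absorbed and your argument stalls. This is the central technical device of the paper's local analysis, and your proposal does not contain a substitute for it.
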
	

Before discussing the difficulties in proving these two propositions, let us give the brief argument that shows that they imply our main stability theorem.

\begin{proof}[Proof of Theorem \ref{thm}]
	We argue by contradiction, assuming there is a sequence $(u_j)\subset C^\infty(\Sph^d)$ of positive functions with $\sigma_1(u_j)>0$ for all $j$ and
	\begin{equation}
		\label{eq:thmproofass}
		\frac{F_2[u_j]-S_d^{(2)}}{\inf_{\lambda,\Psi} \left( \| \lambda\, (u_j)_{\Psi} - 1 \|_{W^{1,2}(\Sph^d)}^2 + \| \lambda\, (u_j)_{\Psi} - 1 \|_{W^{1,4}(\Sph^d)}^4 \right)} \to 0 \,.
	\end{equation}
	By homogeneity we may assume that $\| u_j \|_\frac{4d}{d-4} = \| 1\|_\frac{4d}{d-4}$ for all $j$. Since
	$$
	\inf_{\lambda,\Psi} \left( \| \lambda\, (u_j)_{\Psi} - 1 \|_{W^{1,2}(\Sph^d)}^2 + \| \lambda\, (u_j)_{\Psi} - 1 \|_{W^{1,4}(\Sph^d)}^4 \right) \leq \| 1 \|_{W^{1,2}(\Sph^d)}^2 + \| 1 \|_{W^{1,4}(\Sph^d)}^4 \,,
	$$
	we deduce from \eqref{eq:thmproofass} that $F_2[u_j] \to S_d^{(2)}$ as $j\to\infty$. Thus, $\inf_{\Psi} \| (u_j)_{\Psi} - 1 \|_{W^{1,4}(\Sph^d)} \to 0$ as $j\to\infty$ by Proposition \ref{prop:glob2loc}. Therefore, we can apply Proposition \ref{prop:loc}, and, since
	\begin{align*}
		& \inf_{\lambda,\Psi} \left( \| \lambda\, (u_j)_{\Psi} - 1 \|_{W^{1,2}(\Sph^d)}^2 + \| \lambda\, (u_j)_{\Psi} - 1 \|_{W^{1,4}(\Sph^d)}^4 \right) \\
		& \leq \inf_{\Psi} \left( \| (u_j)_{\Psi} - 1 \|_{W^{1,2}(\Sph^d)}^2 + \| (u_j)_{\Psi} - 1 \|_{W^{1,4}(\Sph^d)}^4 \right),
	\end{align*}
	we obtain a contradiction to \eqref{eq:thmproofass}.
\end{proof}

Propositions \ref{prop:glob2loc} and \ref{prop:loc} will be proved in Sections \ref{sec:2} and \ref{sec:3}, respectively. Let us explain some of the ingredients that go into these proofs.

The global-to-local reduction in Proposition \ref{prop:glob2loc} is a compactness theorem for optimizing sequences. Bianchi and Egnell \cite{Bianchi1991}, as well as essentially all follow-up works, have deduced corresponding compactness theorems using Lions's method of concentration compactness \cite{Lions1985,Lions1985a} or variants thereof. We are not aware of any application of this method to $\mathcal F_2$- or related minimization problems. The obstacles we face are on the one hand the pointwise constraint $\sigma_1(u)>0$, which seems difficult to localize, and, on the other hand, the noncontrollability of $\sigma_1(u)$ through energy norms.

Our way of circumventing this problem is the use of the monotonicity of quotient functionals, due to Ge and Wang \cite{Ge2013}. This inequality essentially allows us to reduce the compactness for minimizers for $\mathcal F_2$ to the compactness of minimizers for $\mathcal F_1$, that is, for the usual Yamabe functional, to which Lions's method is applicable. This necessitates some technical work due to the different parametrizations $g=w^{4/(d-2)}g_*$ and $g=u^{8/(d-4)}g_*$ in the $\mathcal F_1$- and $\mathcal F_2$-minimization problems, respectively. We hope that this way of obtaining a compactness theorem is useful in other problems as well.

The local stability result in Proposition \ref{prop:loc} involves different notions of smallness, namely, in $W^{1,2}$ and $W^{1,4}$, and reflects a different behavior of the functional on functions that are small in either of the two senses. Bianchi and Egnell \cite{Bianchi1991}, and many derivative works, obtained their local stability theorems by a rather straightforward Taylor expansion. The works \cite{Figalli2015,Figalli2022} concerning the Sobolev inequality in $\dot W^{1,p}(\R^d)$ presented a breakthrough, where much more subtle Taylor expansions are used. However, in contrast to our work, they only consider one notion of smallness, namely, in $\dot W^{1,p}(\R^d)$.
	
The way we capture simultaneously both notions of smallness is through a decomposition into spherical harmonics. They are commonly used to effectively exploit orthogonality conditions \cite{Figalli2015,guerra2023sharp,Frank2024}, to facilitate a higher order expansion \cite{Frank2023, Frank2024,Koenig2024} on the sphere, or to make use of improved regularity for finite-dimensional projections \cite{Dolbeault2023}. In our context it is crucial that for spherical harmonics of a bounded degree the $W^{1,2}$- and $W^{1,4}$-norms are equivalent, while for spherical harmonics of diverging degree terms that are naively quartic can, in fact, behave like quadratic terms and are therefore relevant for the stability functional. 

Finally, we mention that recently there have been attempts to obtain explicit constants for functional inequalities in quantitative form; see, for instance, \cite{Bonforte2025,Bonforte2023,Dolbeault2023,BrDoSi}. It is conceivable that a modification of our proof also leads to an explicit stability constant, possibly even with the optimal dependence on the dimension as in \cite{Dolbeault2023}. We have refrained from exploring this since the necessary additional work would overshadow the conceptual ideas that are needed to deal with the new fully nonlinear setting.


\section{Global-to-local reduction}\label{sec:2}

Our goal in this section is to prove Proposition \ref{prop:glob2loc}. That is, we want to show that if $F_2[u]$ is close to $S_d^{(2)}$, then $\lambda (u)_\Psi$ is close to $1$ in $W^{1,4}(\Sph^d)$ for some $\lambda\in\R$ and some M\"obius transformation $\Psi$. (In fact, we will restrict ourselves to appropriately normalized $u$'s and show that we may take $\lambda=1$.) We emphasize that this is a qualitative assertion, as opposed to the quantitative assertion in our main theorem, Theorem \ref{thm}.

The key ingredient in the proof of Proposition \ref{prop:glob2loc} will be the fact that for any metric $g$ that is conformally equivalent to $g_*$ and has $\sigma_1^g>0$, one has
\begin{equation}
	\label{eq:gewang}
	\left( \frac{\mathcal F_2[g]}{S_d^{(2)}} \right)^\frac1{d-4} \geq \left( \frac{\mathcal F_1[g]}{S_d^{(1)}} \right)^\frac1{d-2} \geq 1 \,.
\end{equation}
Let us show how the inequality between $\mathcal F_1$ and $\mathcal F_2$ can be deduced from the works of Guan and Wang \cite{Guan2004} and Ge and Wang \cite{Ge2013}. Indeed, the above inequality appears in \cite[Theorem 1]{Guan2004} under the additional assumption $\sigma_2^g>0$. Thus,
$$
\inf_{\sigma_1(g)>0,\, \sigma_2(g)>0} \frac{\mathcal F_2[g]}{\left( \mathcal F_1[g] \right)^\frac{d-4}{d-2}} = \frac{S_d^{(2)}}{\left( S_d^{(1)} \right)^\frac{d-4}{d-2}} \,.
$$
Meanwhile, it follows from \cite[Theorem 1]{Ge2013} that
$$
\inf_{\sigma_1(g)>0,\, \sigma_2(g)>0} \frac{\mathcal F_2[g]}{\left( \mathcal F_1[g] \right)^\frac{d-4}{d-2}} = \inf_{\sigma_1(g)>0} \frac{\mathcal F_2[g]}{\left( \mathcal F_1[g] \right)^\frac{d-4}{d-2}} \,.
$$
Combining these two facts, we arrive at \eqref{eq:gewang}.

\begin{proof}[Proof of Proposition \ref{prop:glob2loc}]
	Let $(u_j)\subset C^\infty(\Sph^d)$ be a sequence of positive functions with $\sigma_1(u_j)>0$ for all $j$ and satisfying, as $j\to\infty$,
	$$
	F_2[u_j] \to F_2[1]
	\qquad\text{and}\qquad
	\| u_j \|_{\frac{4d}{d-4}} \to \| 1\|_{\frac{4d}{d-4}}\,.
	$$
	Our goal is to show that there is a sequence $(\Psi_j)$ of M\"obius transformations such that
	\begin{equation}
		\label{eq:glob2locgoal}
		(u_j)_{\Psi_j}\to 1
		\qquad\text{in}\ W^{1,4}(\Sph^d) \,.
	\end{equation}
	This will clearly imply the proposition. Also note that it suffices to prove this convergence for a subsequence (because if every subsequence has a further subsequence along which this convergence holds, then it holds for the full sequence). Moreover, by passing to normalized $u_j/\|u_j\|_{\frac{4d}{d-4}}$, we may assume $\|u_j\|_{\frac{4d}{d-4}}=\|1\|_{\frac{4d}{d-4}}$ without loss of generality.
	
	Applying inequality \eqref{eq:gewang} to the metrics $g_j:= u_j^\frac{8}{d-4}g_*$ and noting that $\mathcal F_2[g_j] = F_2[u_j] \to S^{(2)}_d$, we deduce that $\mathcal F_1[g_j] \to S^{(1)}_d$. Thus, if we define the positive functions $w_j$ on $\Sph^d$ by
	$$
	w_j^\frac{4}{d-2} = u_j^\frac 8{d-4} \,,
	$$
	then $F_1[w_j] = \mathcal F_1[g_j] \to S^{(1)}_d$. Moreover, note that
	$$
	\int_{\Sph^d} w_j^\frac{2d}{d-2}\,\mathrm d\omega = \int_{\Sph^d} u_j^\frac{4d}{d-4}\,\mathrm d\omega = |\Sph^d|
	\qquad\text{for all}\ j \,.
	$$
	It follows from Lions's theorem \cite{Lions1985,Lions1985a}, translated from $\R^d$ to $\Sph^d$, that, after passing to a subsequence, there is a sequence of M\"obius transformations $(\Psi_j)$ such that $[w_j]_{\Psi_j}\to 1$ in $W^{1,2}(\Sph^d)$. Let us set
	$$
	s_j := [w_j]_{\Psi_j} - 1
	\qquad\text{and}\qquad
	r_j := (s_j+1)^\frac{d-4}{2(d-2)} - 1 \,.
	$$
	Note that $r_j$ is well-defined since $s_j+1 = [w_j]_{\Psi_j}> 0$. It follows that
	$$
	(u_j)_{\Psi_j} = \left( [w_j]_{\Psi_j} \right)^\frac{d-4}{2(d-2)} = (1+s_j)^\frac{d-4}{2(d-2)} = 1 + r_j \,.
	$$
	We know that $s_j\to 0$ in $W^{1,2}(\Sph^d)$ and need to show that $r_j\to 0$ in $W^{1,4}(\Sph^d)$. To do so, we note that, by conformal invariance,
	\begin{equation}
		\label{eq:glob2locproof}
		S_d^{(2)} + o(1) = F_2[u_j] = F_2[(u_j)_{\Psi_j}] = |\Sph^d|^{-\frac{d-4}d} \int_{\Sph^d} e_2(1+r_j)\,\mathrm d\omega \,.
	\end{equation}
	We have
	$$
	e_2(1+r_j) = f_j +\frac{d(d-1)}{8} (1+r_j)^4 \,.
	$$
	with
	$$
	f_j := \left(\frac{4}{d-4}\right)^3 \left(\sigma_1(1+r_j)+\frac{1}{2} |\nabla r_j|^2+\frac{d-2}{2}\left(\frac{d-4}{4}\right)^2 (1+r_j)^2\right) |\nabla r_j|^2 \,.
	$$
	By the Sobolev inequality, we have $s_j\to 0$ in $L^\frac{2d}{d-2}(\Sph^d)$ and therefore also $r_j\to 0$ in $L^\frac{4d}{d-4}(\Sph^d)$. In particular, we have $(1+r_j)^4\to 1$ in $L^1(\Sph^d)$ and, consequently, by the explicit expression for $S_d^{(2)}$,
	$$
	|\Sph^d|^{-\frac{d-4}d} \int_{\Sph^d} \frac{d(d-1)}{8} (1+r_j)^4\,\mathrm d\omega = S^{(2)}_d + o(1) \,.
	$$
	Combined with \eqref{eq:glob2locproof}, we find that
	$$
	\int_{\Sph^d} f_j \,\mathrm d\omega = o(1) \,.
	$$
	Since $f_j$ is a sum of nonnegative terms, we deduce that $|\nabla r_j|\to 0$ in $L^4(\Sph^d)$, which proves \eqref{eq:glob2locgoal} along a subsequence, as desired.
\end{proof}


\section{Closeness in $W^{1,2}(\Sph^d)$ vs. closeness in $W^{1,4}(\Sph^d)$}

In the previous section we have shown that normalized optimizing sequences $(u_j)$ are relatively compact in $W^{1,4}(\Sph^d)$ up to M\"obius transformations. In this section we would like to bring them into a canonical form $(u_j)_{\Psi_j} = 1 + r_j$, where the remainders $r_j$ not only tend to zero in $W^{1,4}(\Sph^d)$ but also satisfy some (almost-)orthogonality conditions. The latter come from the normalization and an optimal choice of $\Psi_j$.

More precisely, our goal in this section is to prove the following result.

\begin{proposition} \label{prop:qualstab} 
	Let $(u_j)\subset W^{1,4}(\Sph^d)$ with $\| u_j \|_{\frac{4d}{d-4}} = \|1\|_{\frac{4d}{d-4}}$ for all $j$ and $\inf_\Psi \|(u_j)_\Psi -1\|_{W^{1,4}}\to 0$ as $j\to\infty$. Then there is a sequence $(\Psi_j)$ of M\"obius transformations such that
	$$
	r_j := (u_j)_{\Psi_j} - 1
	$$
	satisfies, for all sufficiently large $j$,
	\begin{equation}
		\label{eq:distancebounds}
			\| r_j \|_{W^{1,2}} = \inf_\Psi \|(u_j)_\Psi -1\|_{W^{1,2}}
		\qquad\text{and}\qquad
		\| r_j \|_{W^{1,4}} \lesssim \inf_\Psi \|(u_j)_\Psi -1\|_{W^{1,4}}
	\end{equation}
	as well as
	\begin{equation}\label{eq:ortho}
		\left|	\int_{\mathbb S^d} r_j \, \mathrm d \omega \right| \lesssim \|r_j\|^{2}_2+\|r_j\|^{\frac{4d}{d-4}}_{\frac{4d}{d-4}}  \qquad	\text{and} \qquad
		\left| \int_{\mathbb S^d} \omega_i \, r_j \, \mathrm d \omega \right| \lesssim \|r_j\|^2_{W^{1,2}} \,, \ i=1,\dots,d+1\,.
	\end{equation}
\end{proposition}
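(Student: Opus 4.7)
The plan is to take $\Psi_j$ to be a minimizer of $\Psi\mapsto\|(u_j)_\Psi-1\|_{W^{1,2}}$ over the M\"obius group, which automatically yields the first equality in \eqref{eq:distancebounds}. To see that the infimum is attained, start from any sequence $\tilde\Psi_j$ with $\tilde v_j:=(u_j)_{\tilde\Psi_j}\to 1$ in $W^{1,4}$, reparametrize $\Psi=\tilde\Psi_j\circ\Phi$ (using the composition rule $((u)_A)_B=(u)_{A\circ B}$), and minimize over $\Phi$. Since the $L^{4d/(d-4)}$-norm is conformally invariant, a H\"older computation based on $\|(f)_\Phi\|_2^2=\int J_{\Phi^{-1}}^{(d+4)/(2d)}|f|^2$ gives $\|(\tilde v_j-1)_\Phi\|_2\lesssim\|\tilde v_j-1\|_{4d/(d-4)}\to 0$ \emph{uniformly} in $\Phi$; meanwhile $\Phi\mapsto\|(1)_\Phi-1\|_{W^{1,2}}$ is coercive as $\Phi$ escapes in the M\"obius group modulo isometries (which leave all Sobolev norms invariant), so the minimum must sit in a bounded region of this quotient, where continuity yields a minimizer $\Phi_j$ close to the identity for $j$ large. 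We set $\Psi_j:=\tilde\Psi_j\circ\Phi_j$.

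The first orthogonality in \eqref{eq:ortho} comes purely from the normalization. Since the $L^p$-norm with $p=4d/(d-4)$ is M\"obius-invariant, $\|1+r_j\|_p^p=\|u_j\|_p^p=|\Sph^d|$. Combined with the elementary pointwise inequality $|(1+t)^p-1-pt|\lesssim t^2+|t|^p$ (valid for $p>2$ on all of $\R$, by Taylor near $0$ and direct comparison for large $|t|$), integration yields $p\bigl|\int_{\Sph^d}r_j\bigr|\lesssim\|r_j\|_2^2+\|r_j\|_p^p$.

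The second orthogonality in \eqref{eq:ortho} is the Euler--Lagrange equation at $\Psi_j$. Representatives for the directions in $\mathrm{Conf}(\Sph^d)/\mathrm{Iso}(\Sph^d)$ are the gradient conformal Killing vector fields $X_i:=\nabla\omega_i$, $i=1,\dots,d+1$, with $\operatorname{div} X_i=\Delta\omega_i=-d\,\omega_i$. The infinitesimal variation of $(u_j)_{\Psi_j\circ\exp(tX_i)}=(1+r_j)_{\exp(tX_i)}$ at $t=0$ equals
\[
v_i=\tfrac{d-4}{4d}(\operatorname{div} X_i)(1+r_j)+X_i\cdot\nabla r_j=-\tfrac{d-4}{4}\omega_i+\mathcal R_i(r_j),
\]
with $\mathcal R_i(r_j):=-\tfrac{d-4}{4}\omega_i\,r_j+\nabla\omega_i\cdot\nabla r_j$ linear in $r_j$. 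Combining the critical point condition $\langle r_j,v_i\rangle_{W^{1,2}}=0$ with the identity $\langle r_j,\omega_i\rangle_{W^{1,2}}=(1+d)\int\omega_i r_j$ (from $-\Delta\omega_i=d\,\omega_i$) gives
\[
-\tfrac{(d-4)(1+d)}{4}\int_{\Sph^d}\omega_i r_j+\langle r_j,\mathcal R_i(r_j)\rangle_{W^{1,2}}=0.
\]
A routine integration by parts absorbing the second derivatives of $r_j$ arising in $\nabla r_j\cdot\nabla(\nabla\omega_i\cdot\nabla r_j)$ shows $|\langle r_j,\mathcal R_i(r_j)\rangle_{W^{1,2}}|\lesssim\|r_j\|_{W^{1,2}}^2$, and since the leading coefficient is nonzero (as $d>4$) this yields the second bound in \eqref{eq:ortho}.

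Finally, for the $W^{1,4}$-comparison in \eqref{eq:distancebounds}, choose $\Psi_j^\sharp$ almost realizing $d_4:=\inf_\Psi\|(u_j)_\Psi-1\|_{W^{1,4}}$, set $r_j^\sharp:=(u_j)_{\Psi_j^\sharp}-1$, and write $\Psi_j=\Psi_j^\sharp\circ\Phi_j$. The Sobolev embedding $W^{1,4}\hookrightarrow W^{1,2}$ together with the $W^{1,2}$-minimality of $\Psi_j$ gives $\|r_j\|_{W^{1,2}}\le\|r_j^\sharp\|_{W^{1,2}}\lesssim d_4$. Decomposing $r_j=\bigl((1)_{\Phi_j}-1\bigr)+(r_j^\sharp)_{\Phi_j}$ and invoking (i) the bi-Lipschitz character of the M\"obius action on $W^{1,2}$ and $W^{1,4}$ uniformly for $\Phi_j$ in a bounded neighborhood of the identity, and (ii) the fact that $\Phi\mapsto(1)_\Phi$ is a bi-Lipschitz embedding of $\mathrm{Conf}/\mathrm{Iso}$ into $W^{1,2}$ near $I$, the triangle inequality produces $d(\Phi_j,I)\lesssim\|(1)_{\Phi_j}-1\|_{W^{1,2}}\lesssim\|r_j\|_{W^{1,2}}+\|r_j^\sharp\|_{W^{1,2}}\lesssim d_4$. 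At this scale $\|(1)_{\Phi_j}-1\|_{W^{1,4}}\lesssim d(\Phi_j,I)\lesssim d_4$ and $\|(r_j^\sharp)_{\Phi_j}\|_{W^{1,4}}\lesssim\|r_j^\sharp\|_{W^{1,4}}\lesssim d_4$, so $\|r_j\|_{W^{1,4}}\lesssim d_4$. I expect the principal obstacle to lie here, in establishing the quantitative bi-Lipschitz estimates for the M\"obius action on $W^{1,4}$ at the conformal scale and in verifying the coercivity needed for the existence step; the orthogonality conditions are essentially by now routine Taylor expansions once the minimizer has been constructed.
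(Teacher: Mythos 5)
Your treatment of the two orthogonality conditions \eqref{eq:ortho} matches the paper's exactly: the first follows from the $L^{4d/(d-4)}$-normalization combined with the elementary Taylor inequality, and the second from the Euler--Lagrange equation at the $W^{1,2}$-minimizer together with an integration by parts that trades second derivatives of $r_j$ for derivatives of the smooth vector field $\nabla\omega_i$ (this is the content of Lemma~\ref{lem:productrule}). Those parts are sound.

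The real issues lie in the construction of $\Psi_j$ and the $W^{1,4}$-comparison, and you correctly flag them as the principal obstacles, but two of your asserted tools are not quite the right statements.

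First, for the attainability of the $W^{1,2}$-minimizer, you only control the $L^2$-part of $\|(\tilde v_j - 1)_\Phi\|_{W^{1,2}}$ uniformly in $\Phi$ via H\"older. The gradient part also needs a uniform bound. Such a bound does hold, but it degrades across Sobolev scales: one has $\sup_\Phi \|(f)_\Phi\|_{W^{1,2}} \lesssim \|f\|_{W^{1,4}}$ (by a Hardy-plus-H\"older argument of the type used in Lemma~\ref{lem:confbdd}), and since $\tilde v_j - 1 \to 0$ in $W^{1,4}$ this would suffice. The paper bypasses this bookkeeping altogether by a Fatou-type lower bound on $\|(u)_{\Psi_\xi}-1\|_{W^{1,2}}$ as $|\xi|\to 1$ (Lemma~\ref{lem:mobcomp}), which gives coercivity directly.

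Second, and more seriously, the ``bi-Lipschitz character of the M\"obius action on $W^{1,2}$'' does not exist. The $W^{1,2}$-norm is not scale-invariant under the conformal action: on $\R^d$, under $(f)_{\Phi_\lambda}(x)=\lambda^{(d-4)/4} f(\lambda x)$ one has $\|(f)_{\Phi_\lambda}\|_{\dot W^{1,2}} = \lambda^{-d/4}\|f\|_{\dot W^{1,2}}$, which blows up as $\lambda\to 0$. Thus the estimate $\|(r_j^\sharp)_{\Phi_j}\|_{W^{1,2}}\lesssim \|r_j^\sharp\|_{W^{1,2}}$ is false uniformly in $\Phi_j$; the correct uniform statement is again $\|(r_j^\sharp)_{\Phi_j}\|_{W^{1,2}}\lesssim \|r_j^\sharp\|_{W^{1,4}}$. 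Moreover, as written your chain of estimates assumes $\Phi_j$ lies in a bounded neighborhood of the identity in order to conclude that $\Phi_j$ lies near the identity --- a circularity. A repaired version of your idea would first deduce $\|(1)_{\Phi_j}-1\|_{W^{1,2}}\lesssim d_4\to 0$ from the uniform $W^{1,4}\to W^{1,2}$ bound, then invoke coercivity (Lemma~\ref{lem:mobcomp}) to force $\Phi_j\to I$, and only then apply near-identity expansions. The paper avoids all of this by a compactness-by-contradiction argument (Step 2 of the proof of Lemma~\ref{lem:att}): one normalizes the difference by $\|(u_j)_{\Psi_j'}-1\|_{W^{1,4}}$ and distinguishes whether the relative M\"obius parameter $\xi_j''$ stays bounded or escapes, using Lemma~\ref{lem:subcritical} and Lemma~\ref{lem:confbdd} in the latter case. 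That route is more indirect but sidesteps the scale-dependent bi-Lipschitz estimates your sketch would need to quantify.
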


Here and below we use the symbol $\lesssim$ to denote that the left side is bounded by the right side times a constant that only depends on $d$.

Note that in the almost-orthogonality conditions \eqref{eq:ortho} we bound linear forms of $r_j$ in terms of norms that vanish faster than linearly. This should be interpreted as saying that the linear forms are `almost' zero.

The proof of Proposition \ref{prop:qualstab} will be given in Subsection \ref{sec:qualstabproof}, after we have established some auxiliary results in Subsections \ref{sec:mobius}, \ref{sec:distances}, and \ref{sec:almostortho}. The discussion in this section is valid for general functions in $W^{1,4}(\Sph^d)$, not necessarily smooth nor positive.


\subsection{Preliminaries about M\"obius transformations}\label{sec:mobius}

This subsection contains some preparations for the proof of Proposition \ref{prop:qualstab}. The first result says that, while the $W^{1,4}$-norm is not invariant under M\"obius transformations, it is so up to a constant that only depends on $d$.

\begin{lemma}\label{lem:confbdd}
	There is a constant $C$ such that for all $u\in W^{1,4}(\Sph^d)$,
\begin{equation*}
		\sup_\Psi \| (u)_{\Psi}\|_{W^{1,4}}\leq C \| u\|_{W^{1,4}}\,.
	\end{equation*}
\end{lemma}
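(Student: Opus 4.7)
The plan is to decompose $(u)_\Psi = b_\Psi\,(u\circ\Psi)$ with $b_\Psi := J_\Psi^{(d-4)/(4d)} = (1)_\Psi$, and to estimate the $L^4$-part and gradient-part of the $W^{1,4}$-norm separately. For the $L^4$-part, the conformal invariance $\|(u)_\Psi\|_{L^{4d/(d-4)}(\Sph^d)} = \|u\|_{L^{4d/(d-4)}(\Sph^d)}$ (which is precisely what the weight exponent $(d-4)/(4d)$ is designed to give), combined with H\"older's inequality on the compact manifold $\Sph^d$ and the Sobolev embedding $W^{1,4}(\Sph^d)\hookrightarrow L^{4d/(d-4)}(\Sph^d)$ (valid for $d>4$), immediately yields $\|(u)_\Psi\|_{L^4}\lesssim \|u\|_{W^{1,4}}$.

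For the gradient, the Leibniz rule gives $\nabla(u)_\Psi = b_\Psi\,\nabla(u\circ\Psi) + (u\circ\Psi)\,\nabla b_\Psi$. The first summand integrates explicitly: since $\Psi$ is conformal with factor $a_\Psi = J_\Psi^{1/d}$, we have $|\nabla(u\circ\Psi)|^2 = a_\Psi^2\,|\nabla u|^2\circ\Psi$, and the identity $b_\Psi^4 a_\Psi^4 = a_\Psi^d = J_\Psi$ combined with the change of variables $\eta=\Psi(\omega)$ yields
\begin{equation*}
\int_{\Sph^d} b_\Psi^4\,|\nabla(u\circ\Psi)|^4\,\mathrm d\omega = \int_{\Sph^d}|\nabla u|^4\,\mathrm d\omega.
\end{equation*}
For the second summand, the same change of variables, together with the identity $b_\Psi\circ\Psi^{-1} = 1/b_{\Psi^{-1}}$ (a consequence of the cocycle relation $(b_\Psi)_{\Psi^{-1}}=1$) and the conformal scaling of the gradient, rewrites
\begin{equation*}
\int_{\Sph^d}(u\circ\Psi)^4\,|\nabla b_\Psi|^4\,\mathrm d\omega = \int_{\Sph^d} u^4\,|\nabla\log b_{\Psi^{-1}}|^4\,\mathrm d\eta .
\end{equation*}

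The main obstacle is closing this last estimate. A naive H\"older bound with exponents $(d/4,d/(d-4))$ would require $\sup_\Psi \|\nabla\log b_\Psi\|_{L^d(\Sph^d)}<\infty$, but an explicit stereographic computation shows that this $L^d$-norm is logarithmically divergent as $\Psi$ degenerates. The fix is to work in Lorentz spaces. Since every M\"obius transformation admits a KAK-type decomposition $\Psi=R_1\Psi_s R_2$ as a product of rotations and a one-parameter dilation, and rotations are isometries preserving Lorentz norms of gradients, a direct stereographic computation for the dilations $\Psi_s$ (the relevant quantity has a peak of height $\sim 1/s$ on a $g_*$-region of volume $\sim s^d$) yields the uniform bound $\sup_\Psi\|\nabla\log b_\Psi\|_{L^{d,\infty}(\Sph^d)}<\infty$. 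Combining this with the Sobolev--Lorentz embedding $W^{1,4}(\Sph^d)\hookrightarrow L^{4d/(d-4),4}(\Sph^d)$ and H\"older's inequality in Lorentz spaces then gives
\begin{equation*}
\int_{\Sph^d} u^4\,|\nabla\log b_{\Psi^{-1}}|^4\,\mathrm d\eta
\lesssim \|\nabla\log b_{\Psi^{-1}}\|_{L^{d,\infty}}^4\,\|u\|_{L^{4d/(d-4),4}}^4
\lesssim \|u\|_{W^{1,4}}^4,
\end{equation*}
with constants depending only on $d$. Summing the two gradient contributions and the $L^4$-bound yields the lemma.
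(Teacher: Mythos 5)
Your proof is correct and follows the paper's first two steps exactly (the $L^4$ bound via conformal invariance and Sobolev embedding, and the Leibniz split with the first term reducing to $\|\nabla u\|_4$ by change of variables), but it departs from the paper at the key step of controlling the weight term. The paper bounds the weight $|\nabla J_\Psi^{(d-4)/(4d)}|$ pointwise by $J_\Psi^{1/4}/|\omega-\xi/|\xi||$, changes variables to reduce to $\sup_{\omega_0}\||\omega-\omega_0|^{-1}u\|_4$, and then proves this Hardy-type quantity is $\lesssim\|u\|_{W^{1,4}}$ by a partition of unity near $\omega_0$ and the classical Hardy inequality on $\R^d$. You instead change variables to $\int u^4|\nabla\log b_{\Psi^{-1}}|^4\,\mathrm d\eta$, establish the uniform weak-$L^d$ bound $\sup_\Psi\|\nabla\log b_\Psi\|_{L^{d,\infty}}\lesssim 1$, and close the estimate via the Sobolev--Lorentz embedding $W^{1,4}\hookrightarrow L^{4d/(d-4),4}$ and O'Neil's inequality. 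The two arguments are, at bottom, encoding the same scaling information --- the weight has the same weak-$L^d$ profile as $|\omega-\omega_0|^{-1}$, and Hardy's inequality in $L^p$ is essentially Lorentz duality against a weak-$L^d$ weight --- but the paper's route is more elementary (only the textbook Hardy inequality and a cutoff are needed), whereas yours is slightly more general in that it transparently applies whenever the weight is uniformly weak-$L^d$, without needing to exhibit an explicit pointwise majorant. Minor point: you correctly observe that $\sup_\Psi\|\nabla\log b_\Psi\|_{L^d}$ diverges (logarithmically in $1-|\xi|$), which is precisely why the paper's pointwise reduction to $|\omega-\omega_0|^{-1}$ cannot be upgraded to a plain $L^d$ estimate either and why some form of the Hardy/Lorentz refinement is genuinely necessary.
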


For the proof of this lemma, we will use an explicit parametrization of M\"obius transformations by elements $\xi$ in $B_1(0)$, the unit ball in $\R^{d+1}$. This is well known. One way of obtaining this is through the image under stereographic projection of the transformations on $\R^d\cup\{\infty\}$ generated by the Euclidean group, scaling, and inversion; see, e.g., \cite[Proof of Lemma 8]{Frank2023} or \cite[Subsection 2.2]{guerra2023sharp}. More specifically, we set
\begin{equation*}
	\Psi_\xi(\omega)
	:= \frac{(1-|\xi|^2)\omega -2(1-\xi\cdot\omega)\xi}{1-2\xi\cdot\omega + |\xi|^2}
	\,,\qquad \omega \in \mathbb S^d \,.
\end{equation*}
A short computation shows that
\begin{equation}
	\label{eq:psijac}
	(J_{\Psi_\xi}(\omega))^\frac 1d = \frac{1-|\xi|^2}{1-2\xi\cdot\omega + |\xi|^2}
\end{equation}
as well as 
\begin{equation*}
	\Psi_\xi^{-1}(\omega) = \Psi_{-\xi}(\omega)
	= \frac{(1-|\xi|^2)\omega + 2(1+\xi\cdot\omega)\xi}{1+2\xi\cdot\omega + |\xi|^2}
	\,,\qquad \omega \in \mathbb S^d \,.
\end{equation*}
We note that the above expressions are slightly different from those in \cite{Frank2023}. Setting $\zeta = 2\xi/(1+|\xi|^2)$ in our formulas, we obtain those in \cite{Frank2023}, where $\zeta$ plays the role of what is denoted $\xi$ there. Note that $\xi\mapsto\zeta$ is a bijection of $B_1(0)$.

\begin{proof}
	We note that
	$$
	\| (u)_{\Psi}\|_4 \lesssim \| (u)_{\Psi}\|_\frac{4d}{d-4} = \| u \|_\frac{4d}{d-4} \lesssim \| u \|_{W^{1,4}} \,,
	$$
	so we only need to show that
	$$
	\sup_\Psi \| \nabla (u)_{\Psi}\|_4 \lesssim \| u \|_{W^{1,4}} \,.
	$$
	We write $(u)_{\Psi} = J_\Psi^\frac{d-4}{4d} (u\circ\Psi)$ and bound
	$$
	\|\nabla (u)_{\Psi}\|_{4} \leq \left\| \left(\nabla J_\Psi^\frac{d-4}{4d}\right) (u\circ\Psi) \right\|_{4} + \left\|J_\Psi^\frac{d-4}{4d} (d\Psi)^{\rm T}((\nabla u)\circ \Psi) \right\|_{4} \,.
	$$
	(Here $(d\Psi_\omega)^{\rm T}:T_{\Psi(\omega)}\Sph^d \to T_\omega\Sph^d$ is the adjoint of the map $d\Psi_\omega:T_\omega\Sph^d \to T_{\Psi(\omega)}\Sph^d$ with respect to the given inner products on these spaces.) It follows from the fact that $\Psi$ is a conformal transformation that
	$$
	|(d\Psi)^{\rm T}((\nabla u)\circ \Psi_\xi)| = J_\Psi^\frac{1}{d} |(\nabla u)\circ \Psi_\xi| \,,
	$$
	and therefore, by a change of variables,
	$$
	\left\| J_\Psi^\frac{d-4}{4d} (d\Psi)^{\rm T}((\nabla u)\circ \Psi_\xi) \right\|_{4} = \left\| J_\Psi^\frac{1}{4} |(\nabla u)\circ \Psi_\xi | \right\|_{4} = \left\|\nabla u \right\|_4 \,.
	$$

	Thus, to complete the proof of the lemma, it remains to prove
	$$
	\sup_\Psi \left\| \left(\nabla J_\Psi^\frac{d-4}{4d}\right) (u\circ\Psi) \right\|_{4} \lesssim \| u \|_{W^{1,4}} \,.
	$$
	To this end, we parametrize $\Psi$ as $\Psi_\xi$, so by \eqref{eq:psijac},
	\begin{align*}
		\left|\nabla J_\Psi^\frac{d-4}{4d}(\omega)\right| & = \frac{d-4}2 (1-|\xi|^2)^\frac{d-4}4 (1-2\xi\cdot\omega+|\xi|^2)^{-\frac d4} \sqrt{|\xi|^2 -(\xi\cdot\omega)^2} \\
		& = \frac{d-4}2 \, \frac{\sqrt{|\xi|^2 - (\xi\cdot\omega)^2}}{1-|\xi|^2} \, 
		J_\Psi^\frac{1}{4}(\omega) \,.
	\end{align*}
	By a change of variables,
	\begin{align*}
		\left\| \left(\nabla J_\Psi^\frac{d-4}{4d}\right) (u\circ\Psi_\xi) \right\|_{4} & = \frac{d-4}2 \left\| \frac{\sqrt{|\xi|^2 - (\xi\cdot\omega)^2}}{1-|\xi|^2} \, J_\Psi^\frac{1}{4} \, (u\circ\Psi_\xi) \right\|_4 \\
		& = \frac{d-4}2 \left\| \frac{\sqrt{|\xi|^2 - (\xi\cdot \Psi_\xi^{-1}(\omega))^2}}{1-|\xi|^2} \,  u \, \right\|_4 \,.
	\end{align*}
	Using the explicit form of $\Psi_\xi^{-1}$, we find 
	\begin{align*}
		\frac{\sqrt{|\xi|^2 - (\xi\cdot \Psi_\xi^{-1}(\omega))^2}}{1-|\xi|^2} & = \frac{\sqrt{|\xi|^2 - (\xi\cdot\omega)^2}}{1+2\omega\cdot\xi + |\xi|^2}
		= \frac{\sqrt{|\omega + \xi|^2 - (1+\xi\cdot\omega)^2}}{|\omega + \xi|^2}
		\leq \frac{1}{|\omega+\xi|} \leq \frac{1}{|\omega +\xi/|\xi||} \,.
	\end{align*}
	Thus,
	$$
	\sup_\Psi \left\| \left(\nabla J_\Psi^\frac{1}{4}\right) (u\circ\Psi) \right\|_{4} \lesssim \sup_{\omega_0\in\Sph^d} \| |\omega-\omega_0|^{-1} u \|_4 \,,
	$$
and consequently the lemma will follow if we prove the bound
	$$
	\sup_{\omega_0\in\Sph^d} \| |\omega-\omega_0|^{-1} u \|_4 \lesssim \| u \|_{W^{1,4}} \,.
	$$
	
	To do so, by rotation invariance we may assume that $\omega_0 = e_{d+1}$ is the north pole. We choose smooth functions $\chi_0, \chi_1$ on $\Sph^d$ with $\chi_0^4 + \chi_1^4 =1$ such that $\chi_0$ is equal to $1$ in a neighborhood of $e_{d+1}$ and vanishes outside another, larger neighborhood of $e_{d+1}$. We set $u_j := \chi_j u$ for $j=0,1$. Since $|\omega - e_{d+1}|^{-1}$ is nonsingular on the support of $\chi_1$, we have
	$$
	\| |\omega - e_{d+1}|^{-1} u_1 \|_4 \lesssim \| u_1 \|_4 \leq \| u_1 \|_{W^{1,4}} \,.
	$$
	To control $u_0$, we use Hardy's inequality on $\R^d$, $d>4$, which says that
	$$
	\int_{\R^d} \frac{|v|^4}{|x|^4}\,\mathrm dx \lesssim \int_{\R^d} |\nabla v|^4\,\mathrm dx \,.
	$$
	By going to local coordinates, we deduce that
	$$
	\| |\omega - e_{d+1}|^{-1} u_0 \|_4 \lesssim \| \nabla u_0 \|_4 \leq \| u_0 \|_{W^{1,4}} \,.
	$$
	The claimed bound now follows from the fact that
	$$
	\| u_0 \|_{W^{1,4}}^4 + \| u_1 \|_{W^{1,4}}^4 \lesssim \| u \|_{W^{1,4}}^4 \,,
	$$
	which is easy to see. This completes the proof of the lemma.
\end{proof}

In contrast to the critical case, the singularity of conformal images of $u$ under $\Psi_{\xi}$ as $|\xi|\to 1$ is negligible for subcritical exponents, which can be observed in the next lemma, at least in the special case $u=1$.

\begin{lemma} \label{lem:subcritical}
We have
\begin{equation*}
	\lim_{|\xi|\to 1}\|(1)_{\Psi_{\xi}}\|_q = 0 \quad\text{for}\quad q<\frac{4d}{d-4}
	\qquad\text{and}\qquad
	\lim_{|\xi|\to 1}\|\nabla (1)_{\Psi_{\xi}}\|_p = 0 \quad\text{for}\quad p<4 \,.
\end{equation*}
\end{lemma}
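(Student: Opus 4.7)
I would prove Lemma \ref{lem:subcritical} by combining a.e.\ pointwise convergence to zero with equi-integrability, thereby avoiding any explicit integral computation.

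First, the pointwise vanishing. From \eqref{eq:psijac},
$$
(1)_{\Psi_\xi}(\omega) = J_{\Psi_\xi}^{(d-4)/(4d)}(\omega) = \left(\frac{1-|\xi|^2}{1-2\xi\cdot\omega+|\xi|^2}\right)^{(d-4)/4}.
$$
Given any sequence $(\xi_n)$ with $|\xi_n|\to 1$, after passing to a subsequence we may assume $\xi_n/|\xi_n|\to \eta_0$ for some $\eta_0\in\Sph^d$. For every $\omega\ne\eta_0$ the denominator converges to $2(1-\eta_0\cdot\omega)>0$ while the numerator vanishes, so $(1)_{\Psi_{\xi_n}}(\omega)\to 0$ pointwise a.e. For the gradient, the formula derived inside the proof of Lemma \ref{lem:confbdd}, namely
$$
|\nabla (1)_{\Psi_\xi}(\omega)| = \tfrac{d-4}{2}\cdot\frac{\sqrt{|\xi|^2-(\xi\cdot\omega)^2}}{1-|\xi|^2}\cdot J_{\Psi_\xi}^{1/4}(\omega),
$$
shows that for $\omega\ne\pm\eta_0$ the middle factor grows like $(1-|\xi_n|^2)^{-1}$ whereas $J_{\Psi_{\xi_n}}^{1/4}(\omega)=O((1-|\xi_n|^2)^{d/4})$. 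Their product vanishes like $(1-|\xi_n|^2)^{(d-4)/4}$, which tends to zero because $d>4$.

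Next I would extract equi-integrability from the critical bounds. Conformal invariance of the critical Lebesgue norm yields $\|(1)_{\Psi_\xi}\|_{4d/(d-4)}=\|1\|_{4d/(d-4)}$, and applying Lemma \ref{lem:confbdd} to $u=1$ gives $\|\nabla(1)_{\Psi_\xi}\|_4\lesssim 1$. Thus for $q<4d/(d-4)$ the family $\{(1)_{\Psi_\xi}^{\,q}\}$ is bounded in $L^{4d/((d-4)q)}(\Sph^d)$ with exponent strictly greater than $1$, and the analogous statement holds for $\{|\nabla(1)_{\Psi_\xi}|^p\}$ in $L^{4/p}(\Sph^d)$ when $p<4$. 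On the finite-measure space $\Sph^d$, such an $L^r$-bound with $r>1$ implies equi-integrability, so by Vitali's convergence theorem the a.e.\ convergence to $0$ upgrades to $L^1$ convergence of these powers, which is exactly $\|(1)_{\Psi_\xi}\|_q\to 0$ and $\|\nabla(1)_{\Psi_\xi}\|_p\to 0$.

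The only delicate step is the a.e.\ vanishing of $|\nabla(1)_{\Psi_\xi}|$, where the $(1-|\xi|^2)^{-1}$ blow-up of the angular prefactor must be overcome by the concentration of $J_{\Psi_\xi}^{1/4}$; the cancellation leaves the net exponent $(d-4)/4$, which is precisely where the hypothesis $d>4$ becomes essential.
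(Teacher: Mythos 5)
Your argument is correct, and it takes a genuinely different route from the paper's. The paper proves the lemma by direct computation: it writes out $(1)_{\Psi_\xi}$ and its gradient explicitly in terms of $(1-|\xi|^2)/|\omega-\xi|^2$ and invokes the quantitative estimate $\int_{\Sph^d} |\omega-\xi|^{-\alpha}\,\mathrm d\omega \lesssim (1-|\xi|^2)^{d-\alpha}$ (equation \eqref{eq:elementaryintegral}), which it establishes by a short one-dimensional integral bound; this gives an explicit polynomial rate of decay in $1-|\xi|^2$. You instead combine a.e.\ pointwise vanishing (along subsequences with $\xi_n/|\xi_n|\to\eta_0$) with uniform integrability, which you extract from the conformal invariance of the critical $L^{4d/(d-4)}$-norm and the uniform $W^{1,4}$-bound of Lemma~\ref{lem:confbdd}, and then close the argument with Vitali's theorem and a subsequence-of-subsequence step. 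Your approach is softer and arguably cleaner, avoiding any integral estimate, at the cost of not producing a rate; the paper's approach is more quantitative and, importantly, the estimate \eqref{eq:elementaryintegral} that it establishes in this proof is reused later (in Section~\ref{sec:4}), so the extra computation serves double duty there. Both the pointwise-cancellation analysis (where the $(1-|\xi|^2)^{-1}$ blow-up of the angular factor is beaten by the $(1-|\xi|^2)^{d/4}$ decay of $J_{\Psi_\xi}^{1/4}$, leaving a net positive exponent $(d-4)/4$) and the equi-integrability step in your write-up are sound.
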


\begin{proof}
	Formula \eqref{eq:psijac} gives an explicit expression for $(1)_{\Psi_\xi}$ and its gradient; see also the previous proof. Given these explicit expressions, the assertion is a consequence of the elementary estimate
	\begin{equation}
		\label{eq:elementaryintegral}
		\int_{\Sph^d} \frac{\mathrm d\omega}{|\omega-\xi|^\alpha} \lesssim (1-|\xi|^2)^{-\alpha+d}
	\end{equation} for $\alpha>d$.
	Let us quickly justify this bound. The integral is equal to $|\Sph^{d-1}|$ times
	$$
	\int_0^\pi \frac{\sin^{d-1}\theta}{(1- 2|\xi|\cos\theta + |\xi|^2)^{\alpha/2}} \,\mathrm d\theta = \int_{-1}^1 \frac{(1-t^2)^{(d-2)/2}}{(1-2|\xi| t + |\xi|^2)^{\alpha/2}}\,\mathrm d t \,.
	$$
	We may clearly assume that $|\xi|\geq 1/2$. Then we bound $1-2|\xi| t + |\xi|^2$ in the denominator from below by $\gtrsim 1-t$ if $1-t\geq (1-|\xi|)^2$ and by $\gtrsim (1-|\xi|)^2$ if $1-t<(1-|\xi|)^2$, and perform the resulting integrals. This leads to \eqref{eq:elementaryintegral}.
\end{proof}

\begin{lemma}\label{lem:mobcomp}
	Let $u\in W^{1,4}(\Sph^d)$. Then for all $p\leq 4$,
	$$
	\liminf_{|\xi|\to 1} \|(u)_{\Psi_\xi}-1\|_{W^{1,p}} \geq \| 1 \|_{W^{1,p}} \,.
	$$
\end{lemma}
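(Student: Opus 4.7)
The plan is to reduce the claim to the simple observation that the $L^p$-norm of $(u)_{\Psi_\xi}$ itself vanishes as $|\xi|\to 1$, for every $p$ strictly below the critical exponent $4d/(d-4)$. Since $p\leq 4<4d/(d-4)$, this will cover both subcases.

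First, since $\nabla 1 =0$ we have $\|1\|_{W^{1,p}}=\|1\|_p$. The reverse triangle inequality in $L^p$ then gives
\begin{equation*}
\|(u)_{\Psi_\xi}-1\|_{W^{1,p}} \geq \|(u)_{\Psi_\xi}-1\|_p \geq \|1\|_p - \|(u)_{\Psi_\xi}\|_p = \|1\|_{W^{1,p}} - \|(u)_{\Psi_\xi}\|_p \,.
\end{equation*}
Thus the lemma reduces to proving $\|(u)_{\Psi_\xi}\|_p\to 0$ as $|\xi|\to 1$ for every $p\leq 4$. This is the step where the gradient behavior of $(u)_{\Psi_\xi}$ is irrelevant, so we do not have to face the critical case $p=4$ for the gradient (where the $L^4$-norm of $\nabla (u)_{\Psi_\xi}$ is conformally invariant and therefore does \emph{not} tend to zero).

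For bounded $u$, the pointwise inequality $|(u)_{\Psi_\xi}(\omega)| \leq \|u\|_\infty \, J_{\Psi_\xi}^{(d-4)/(4d)}(\omega)=\|u\|_\infty \, (1)_{\Psi_\xi}(\omega)$ and the subcritical decay from Lemma \ref{lem:subcritical} immediately yield $\|(u)_{\Psi_\xi}\|_p\to 0$. For general $u\in W^{1,4}(\Sph^d)$ we use the Sobolev embedding $W^{1,4}(\Sph^d)\hookrightarrow L^{4d/(d-4)}(\Sph^d)$ and density of bounded functions in $L^{4d/(d-4)}$: pick $u_n\in L^\infty(\Sph^d)$ with $u_n\to u$ in $L^{4d/(d-4)}$. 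The conformal invariance of the $L^{4d/(d-4)}$-norm, namely $\|(v)_{\Psi}\|_{4d/(d-4)}=\|v\|_{4d/(d-4)}$, which follows from a change of variables using \eqref{eq:psijac}, combined with H\"older's inequality on the compact sphere $\Sph^d$, yields
\begin{equation*}
\|(u-u_n)_{\Psi_\xi}\|_p \lesssim \|(u-u_n)_{\Psi_\xi}\|_{\frac{4d}{d-4}} = \|u-u_n\|_{\frac{4d}{d-4}} \,,
\end{equation*}
which is uniformly small in $\xi$ for $n$ large. Combined with the bounded case applied to $u_n$, this gives $\limsup_{|\xi|\to 1}\|(u)_{\Psi_\xi}\|_p\lesssim \|u-u_n\|_{4d/(d-4)}$ for every $n$, and letting $n\to\infty$ concludes the proof.

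The only subtlety is this last density step: one must avoid trying to dominate $\nabla (u)_{\Psi_\xi}$, because in the borderline case $p=4$ its $L^4$-norm equals $\|\nabla u\|_4$ by conformal invariance and does not decay. The triangle inequality in $L^p$ alone sidesteps this obstacle, and the remainder is routine.
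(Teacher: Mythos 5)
Your proof is correct, and it takes a genuinely different route from the paper. The paper argues by contradiction: given a sequence $\xi_j$ with $|\xi_j|\to 1$, it extracts a subsequence with $\xi_j\to\omega_0\in\Sph^d$, shows that both $(u)_{\Psi_{\xi_j}}$ and $\nabla(u)_{\Psi_{\xi_j}}$ tend to $0$ pointwise a.e.\ on $\Sph^d\setminus\{\omega_0\}$, and concludes via Fatou's lemma. You instead observe that $\nabla 1=0$, so $\|1\|_{W^{1,p}}=\|1\|_p$, and that the $L^p$-part of the $W^{1,p}$-norm dominates the target $\|1\|_p$ via the reverse triangle inequality; the claim then reduces to $\|(u)_{\Psi_\xi}\|_p\to 0$ for subcritical $p$, which you prove for bounded $u$ by pointwise domination and Lemma~\ref{lem:subcritical}, and for general $u\in W^{1,4}(\Sph^d)$ by a density argument in $L^{4d/(d-4)}$ using the conformal invariance of the critical norm $\|(v)_\Psi\|_{4d/(d-4)}=\|v\|_{4d/(d-4)}$. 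What each approach buys: the paper's Fatou argument treats the gradient and non-gradient parts of the $W^{1,p}$-norm symmetrically, but the pointwise a.e.\ convergence $u(\Psi_{\xi_j}(\omega))\cdot J_{\Psi_{\xi_j}}^{(d-4)/(4d)}(\omega)\to 0$ for a merely Sobolev (non-continuous) $u$ tacitly requires an approximation step of the kind you make explicit. Your reduction to the $L^p$-norm alone sidesteps the gradient entirely (which, as you rightly stress, would be delicate at $p=4$ since $\|\nabla(u)_{\Psi_\xi}\|_4$ is conformally invariant and does not decay), and the density-plus-conformal-invariance step cleanly handles the lack of continuity of $u$. Overall yours is somewhat more elementary and more explicitly rigorous, at the modest cost of invoking the Sobolev embedding and a two-step approximation.
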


\begin{proof}
	Fix $p\leq 4$ and assume to the contrary that there is a sequence $(\xi_j)\subset B_1(0)$ such that $|\xi_j|\to 1$ and such that
	$$
	\lim_{j\to \infty} \|(u)_{\Psi_{\xi_j}}-1\|_{W^{1,p}} 
	\qquad\text{exists and is}\ < \| 1 \|_{W^{1,p}} \,.
	$$
	Passing to a subsequence, if necessary, we may assume that there is an $\omega\in\Sph^d$ such that $\xi_j\to\omega$. Then the computations in the previous proof show that
	$$
	\nabla (u)_{\Psi_{\xi_j}} \to 0
	\qquad\text{and}\qquad
	(u)_{\Psi_{\xi_j}} \to 0 \qquad\text{pointwise a.e.\,in}\ \Sph^d \setminus\{\omega\}\,.
	$$
	It follows from Fatou's lemma that
	$$
	\lim_{j\to \infty} \|(u)_{\Psi_{\xi_j}}-1\|_{W^{1,p}} \geq \| 1 \|_{W^{1,p}} \,.
	$$
	This is a contradiction.
\end{proof}


\subsection{Comparability of distances}\label{sec:distances}

We recall that the goal of this section is to find, for a given normalized $u$ with $\inf_{\Psi'} \| (u)_{\Psi'} - 1 \|_{W^{1,4}}$ small, a representation $(u)_\Psi = 1+r$ where $r$ is small in $W^{1,4}$ and satisfies almost-orthogonality conditions. These almost-orthogonality conditions are relatively easy to derive when $\Psi$ is chosen to minimize not the $W^{1,4}$- but the $W^{1,2}$-distance $\inf_{\Psi''} \| (u)_{\Psi''} - 1 \|_{W^{1,2}}$, as shown in the next subsection. However, when $\Psi$ is chosen in this manner, it is not obvious that the corresponding $r$ is small in $W^{1,4}$. That it is small in $W^{1,4}$ is the content of the following result, which has some similarities with \cite[Proposition 4.1]{Figalli2015}.

\begin{lemma}\label{lem:att} 
	There is a constant $C$ such that for any $u\in W^{1,4}(\Sph^d)$ with $\inf_{\Psi} \| (u)_{\Psi} - 1 \|_{W^{1,2}}<\| 1 \|_{W^{1,2}}$, there is a M\"obius transformation $\Psi'$ such that
	$$
	\| (u)_{\Psi'} - 1 \|_{W^{1,2}} = \inf_{\Psi} \| (u)_{\Psi} - 1 \|_{W^{1,2}}
	$$
	and
	\begin{equation}
		\label{eq:bdd}
		\| (u)_{\Psi'} - 1 \|_{W^{1,4}} \leq C \inf_{\Psi} \| (u)_{\Psi} - 1 \|_{W^{1,4}} \,.
	\end{equation}
\end{lemma}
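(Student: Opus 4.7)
The plan rests on a parametrization and a comparison between the two Sobolev norms via a finite-dimensional correction. Writing any M\"obius transformation as $\Psi=\Psi_\xi\circ R$ with $\xi\in B_1(0)$ and $R\in O(d+1)$, the chain rule gives $(u)_{\Psi_\xi\circ R}=(u)_{\Psi_\xi}\circ R$, so by rotation invariance of $\|\cdot\|_{W^{1,p}(\Sph^d)}$ one has $\|(u)_\Psi-1\|_{W^{1,p}}=\|(u)_{\Psi_\xi}-1\|_{W^{1,p}}$. Both infima in the statement therefore reduce to infima over $\xi\in B_1(0)$; for attainment, the continuous function $\xi\mapsto \|(u)_{\Psi_\xi}-1\|_{W^{1,2}}$ has $\liminf\geq\|1\|_{W^{1,2}}$ as $|\xi|\to 1$ by Lemma~\ref{lem:mobcomp}, and since the hypothesis places the infimum strictly below $\|1\|_{W^{1,2}}$, any sufficient sublevel set is a compact subset of $B_1(0)$, so the infimum is attained at some $\xi^*\in B_1(0)$. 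Set $\Psi'\coloneqq \Psi_{\xi^*}$.

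Toward~\eqref{eq:bdd}, set $I\coloneqq\inf_\Psi\|(u)_\Psi-1\|_{W^{1,4}}$, let $\Psi^\#=\Psi_{\xi^\#}$ be a M\"obius transformation with $\|v-1\|_{W^{1,4}}\leq 2I$ where $v\coloneqq (u)_{\Psi^\#}$, and set $\Phi\coloneqq (\Psi^\#)^{-1}\circ\Psi'$. The composition rule $(u)_{\Psi'}=((u)_{\Psi^\#})_\Phi$ and additivity give
\begin{equation*}
(u)_{\Psi'}-1=(v-1)_\Phi+\bigl((1)_\Phi-1\bigr),
\end{equation*}
and Lemma~\ref{lem:confbdd} bounds the first summand by $\|(v-1)_\Phi\|_{W^{1,4}}\lesssim \|v-1\|_{W^{1,4}}\lesssim I$. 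The delicate term is $(1)_\Phi-1$, since $\Psi'$ is optimal only in $W^{1,2}$ and the $W^{1,2}$-norm is not conformally bounded on $\Sph^d$.

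To handle it, I first bound $\|(1)_\Phi-1\|_{W^{1,2}}$ via the triangle inequality and the optimality of $\xi^*$:
\begin{equation*}
\|(1)_\Phi-1\|_{W^{1,2}}\leq \|(u)_{\Psi'}-1\|_{W^{1,2}}+\|(v-1)_\Phi\|_{W^{1,2}}\leq \|v-1\|_{W^{1,2}}+\|(v-1)_\Phi\|_{W^{1,2}},
\end{equation*}
and I would estimate each term by $\lesssim \|v-1\|_{W^{1,4}}\lesssim I$ using H\"older on $\Sph^d$, the Sobolev embedding $W^{1,4}\hookrightarrow L^{4d/(d-4)}$, and Lemma~\ref{lem:confbdd} applied to $\|\nabla(v-1)_\Phi\|_2\leq |\Sph^d|^{1/4}\|\nabla(v-1)_\Phi\|_4$. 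Writing $\Phi=\Psi_{\tilde\xi}\circ\tilde R$, the first-paragraph identity applied to $u=1$ shows that $\|(1)_\Phi-1\|_{W^{1,p}}=\|(1)_{\Psi_{\tilde\xi}}-1\|_{W^{1,p}}$ depends only on $\tilde\xi$. If $I$ is smaller than a threshold $\eta_0>0$ depending only on $d$, the preceding bound forces $\|(1)_{\Psi_{\tilde\xi}}-1\|_{W^{1,2}}<\tfrac12\|1\|_{W^{1,2}}$, and Lemma~\ref{lem:mobcomp} (with $u=1$) then confines $\tilde\xi$ to a fixed compact subset $K\subset B_1(0)$. On $K$ the two continuous functions $\tilde\xi\mapsto \|(1)_{\Psi_{\tilde\xi}}-1\|_{W^{1,p}}$ for $p=2,4$ are comparable---both vanish only at $\tilde\xi=0$ and do so linearly in $|\tilde\xi|$ by a Taylor expansion of~\eqref{eq:psijac}---so $\|(1)_\Phi-1\|_{W^{1,4}}\lesssim \|(1)_\Phi-1\|_{W^{1,2}}\lesssim I$. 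If instead $I\geq \eta_0$, the crude bound $\|(1)_\Phi-1\|_{W^{1,4}}\lesssim \|1\|_{W^{1,4}}\lesssim_{\eta_0} I$ from Lemma~\ref{lem:confbdd} suffices, completing the proof.

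The principal obstacle is exactly the mismatch just mentioned: since $W^{1,2}$ is not conformally bounded on $\Sph^d$, the $W^{1,2}$-optimality of $\Psi'$ cannot be transferred to a $W^{1,4}$-estimate directly. The resolution is the observation that $(1)_\Phi$ depends on a finite-dimensional parameter, where compactness from Lemma~\ref{lem:mobcomp} and the linear scaling of $(1)_{\Psi_\xi}-1$ near $\xi=0$ make the two Sobolev norms comparable on the relevant $\tilde\xi$-compactum.
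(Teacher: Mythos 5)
Your proposal is correct, and while it relies on the same three underlying ingredients as the paper's proof of \eqref{eq:bdd} --- the conformal boundedness of the $W^{1,4}$-norm (Lemma~\ref{lem:confbdd}), the loss of compactness as $|\xi|\to1$ (Lemma~\ref{lem:mobcomp}), and the comparability of the $W^{1,2}$- and $W^{1,4}$-norms of $(1)_{\Psi_\xi}-1$ on compact subsets of $B_1(0)$ --- it organizes them into a genuinely different, direct argument. The paper proceeds by contradiction: it supposes the ratio $\|(u_j)_{\Psi_j'}-1\|_{W^{1,4}}/\inf_\Psi\|(u_j)_\Psi-1\|_{W^{1,4}}$ diverges, normalizes to produce $v_j,\tilde v_j,\Delta_j$ with $\|\Delta_j\|_{W^{1,4}}\to1$ and $\|\Delta_j\|_{W^{1,2}}\to0$, recognizes $\Delta_j$ as a scalar multiple of $(1)_{\Phi_j}-1$, and then splits into the cases $\liminf|\xi_j''|<1$ (norm comparability on a compactum gives a contradiction) and $\lim|\xi_j''|=1$ (Lemma~\ref{lem:subcritical} gives a contradiction). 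You instead write the decomposition $(u)_{\Psi'}-1=(v-1)_\Phi+\bigl((1)_\Phi-1\bigr)$ explicitly, bound the first summand by $I$ via Lemma~\ref{lem:confbdd}, and for the second summand first derive the $W^{1,2}$-smallness from the optimality of $\Psi'$ and then upgrade to $W^{1,4}$-smallness by confining the parameter to a compactum (small-$I$ case) or trivially (large-$I$ case). This buys a constructive proof with an in-principle explicit constant $C=C(d)$, at the cost of having to introduce the threshold $\eta_0$ and handle the two cases separately; the paper's sequential argument is shorter to write down but gives no handle on the constant. One small point worth making explicit in your write-up: the comparability of the two norms on the compactum $K$ uses that the leading-order term of $(1)_{\Psi_{\tilde\xi}}-1$ as $\tilde\xi\to0$ is $\tfrac{d-4}{2}\,\tilde\xi\cdot\omega$, whose $W^{1,2}$- and $W^{1,4}$-norms are both exactly proportional to $|\tilde\xi|$ with nonzero constants, so the ratio of the two norm functions extends continuously (and positively) to $\tilde\xi=0$ and is therefore bounded on $K$.
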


\begin{proof}
	\emph{Step 1.} We begin by proving that for all $u\in W^{1,4}(\Sph^d)$ with $\inf_\Psi \| (u)_\Psi - 1\|_{W^{1,2}}<\| 1 \|_{W^{1,2}}$, the latter infimum is attained.
	
	We use the parametrization of M\"obius transformations $\Psi_\xi$ with $\xi\in B_1(0)$ as in Lemma \ref{lem:mobcomp}. It is easy to see that $\xi\mapsto (u)_{\Psi_\xi}$ is continuous as a map from $B_1(0)$ to $W^{1,2}(\Sph^d)$. Consequently, $\xi\mapsto \| (u)_{\Psi_\xi} - 1\|_{W^{1,2}}$ is continuous in $B_1(0)$. By Lemma \ref{lem:mobcomp} its limit inferior for $|\xi|\to 1$ is $\geq \| 1 \|_{W^{1,2}}$, while by assumption it assumes a value $<\| 1 \|_{W^{1,2}}$ in the interior. Therefore, by compactness the infimum is attained in $B_1(0)$.
	
	\medskip
	
	\emph{Step 2.}
	To prove \eqref{eq:bdd}, we assume by contradiction that there is a sequence $(u_j)\subset W^{1,4}(\Sph^d)$ with $\inf_\Psi \|(u_j)_\Psi - 1 \|_{W^{1,2}} < \|1\|_{W^{1,2}}$ for all $j$ as well as a sequence $(\Psi_j')$ of M\"obius transformations such that
	$$
	\|(u_j)_{\Psi_j'} - 1 \|_{W^{1,2}} = \inf_\Psi \|(u_j)_\Psi - 1 \|_{W^{1,2}}
	\qquad\text{and}\qquad
	\|(u_j)_{\Psi_j'} - 1 \|_{W^{1,4}} > j \inf_\Psi \|(u_j)_\Psi - 1 \|_{W^{1,4}} \,.
	$$
	
	By definition of the infimum $\inf_\Psi \|(u_j)_\Psi - 1 \|_{W^{1,4}}$, there is a sequence $(\Psi_j'')$ of M\"obius transformations such that
	\begin{equation}\label{eq:bdd1}
		\|(u_j)_{\Psi_j'} - 1 \|_{W^{1,4}} > j \|(u_j)_{\Psi_j''} - 1 \|_{W^{1,4}} \,.
	\end{equation}
	
	Thanks to the invariance (up to constants) of the $W^{1,4}$-norm under M\"obius transformations (Lemma \ref{lem:confbdd}), we deduce that
	\begin{equation}\label{eq:bdd2}
		\|(u_j)_{\Psi'_j}-1\|_{W^{1,4}} \gtrsim j 	\|(u_j)_{\Psi'_j}-(1)_{(\Psi''_j)^{-1}\circ \Psi'_j}\|_{W^{1,4}} \,. 
	\end{equation}
	
	Employing \eqref{eq:bdd2} and the triangle inequality, we observe that
	$$
	v_j\coloneqq \frac{(u_j)_{\Psi'_j}-1}{\|(u_j)_{\Psi'_j}-1\|_{W^{1,4}}}\,, \qquad\tilde v_j\coloneqq \frac{(u_j)_{\Psi'_j}-(1)_{(\Psi''_j)^{-1}\circ \Psi'_j}}{\|(u_j)_{\Psi'_j}-1\|_{W^{1,4}}}\,, \qquad\Delta_j\coloneqq v_j-\tilde v_j 
	$$ 
	satisfy, as $j\to\infty$,
	\begin{equation}\label{eq:phiconv}
		\|v_j\|_{W^{1,4}}\to 1\,,\qquad \|\tilde v_j\|_{W^{1,4}}\to 0\,, \qquad\|\Delta_j\|_{W^{1,4}}\to 1 \,.
	\end{equation} Applying the Sobolev inequality and \eqref{eq:bdd1}, we find 
	$$
	\|v_j\|_{W^{1,2}} = \frac{\inf_\Psi \|(u_j)_\Psi - 1 \|_{W^{1,2}}}{\|(u_j)_{\Psi'_j}-1\|_{W^{1,4}}} 
	\leq \frac{\|(u_j)_{\Psi''_j}-1\|_{W^{1,2}}}{\|(u_j)_{\Psi'_j}-1\|_{W^{1,4}}}\lesssim \frac{\|(u_j)_{\Psi''_j}-1\|_{W^{1,4}}}{\|(u_j)_{\Psi'_j}-1\|_{W^{1,4}}}\leq \frac{1}{j}\to 0\,,
	$$ 
	and thus
	\begin{equation*}
		\|\tilde v_j\|^2_{W^{1,2}}+2\langle \tilde v_j, \Delta_j\rangle_{W^{1,2}}+\|\Delta_j\|^2_{W^{1,2}}=	\| v_j\|^2_{W^{1,2}}\to 0 
	\end{equation*} as $j\to\infty$. By \eqref{eq:phiconv}, we have $\|\tilde v_j\|^2_{W^{1,2}}+2\langle \tilde v_j, \Delta_j\rangle_{W^{1,2}} \to 0$ and therefore
	$$
	\|\Delta_j\|^2_{W^{1,2}} \to 0 \,.
	$$
	
	Up to the factor of $\|(u_j)_{\Psi'_j}-1\|_{W^{1,4}}^{-1}$, the function $\Delta_j$ is given by $(1)_{(\Psi''_j)^{-1}\circ \Psi'_j} - 1$. We can think of the latter function as being a function of a variable $\xi_j''\in B_1(0)$ parametrizing the M\"obius transformation $(\Psi''_j)^{-1}\circ \Psi'_j$. We distinguish between two behaviors of the sequence $(\xi_j'')$.
	
	If $\liminf_{j\to\infty}|\xi_j''|<1$, then along the corresponding subsequence all norms of such functions of $\xi_j''$ are equivalent. As a consequence, the properties $\|\Delta_j\|_{W^{1,4}}\to 1$ and $\|\Delta_j\|^2_{W^{1,2}} \to 0$ contradict each other.
	
	If $\lim_{j\to\infty}|\xi_j''|=1$, then Lemma \ref{lem:subcritical} implies that $$\|(1)_{\Psi_{\xi_j''}}-1\|_{W^{1,2}}\to \|1\|_{W^{1,2}}\gtrsim 1\,.$$ 
	However, we know by Lemma \ref{lem:confbdd} that
	$$\|(1)_{\Psi_{\xi_j''}}-1\|_{W^{1,4}}\lesssim 1\,,$$ which leads to a contradiction as 
	$$\frac{\|(1)_{\Psi_{\xi_j''}}-1\|_{W^{1,2}}}{\|(1)_{\Psi_{\xi_j''}}-1\|_{W^{1,4}}}=\frac{\|\Delta_j\|_{W^{1,2}}}{\|\Delta_j\|_{W^{1,4}}}\to 0\,.$$
	
	Both cases together prove the second part of the proposition.
\end{proof}


\subsection{Almost-orthogonality conditions}\label{sec:almostortho}

We show that choosing the M\"obius transformation $\Psi$ optimal in the $W^{1,2}$-sense yields almost-orthogonality conditions for $r=(u)_\Psi -1$.

\begin{lemma}\label{lem:almostortho}
	Let $u\in W^{1,2}(\Sph^d)$ be such that $\inf_\Psi \| (u)_\Psi - 1 \|_{W^{1,2}}$ is attained at $\Psi=\operatorname{id}$. Then $r:= u-1$ satisfies
	$$
	\left| \int_{\Sph^d} \omega_i \, r \,\mathrm d\omega \right| \lesssim \| r \|_{W^{1,2}}^2 \,,
	\qquad i = 1,\ldots, d+1 \,.
	$$
\end{lemma}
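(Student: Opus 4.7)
The plan is to derive the almost-orthogonality to first spherical harmonics from the first-order optimality condition of the map $\xi \mapsto \|(u)_{\Psi_\xi} - 1\|^2_{W^{1,2}}$ at $\xi = 0$, using the parametrization $\Psi_\xi$, $\xi \in B_1(0)$, introduced in Subsection~\ref{sec:mobius}, with $\Psi_0 = \mathrm{id}$. Using the explicit formula \eqref{eq:psijac} and the chain rule, a direct computation gives
$$\phi_i := \partial_{\xi_i}\big|_{\xi=0} (u)_{\Psi_\xi} = \tfrac{d-4}{2}\,\omega_i u - 2 \,\partial_{i,*} u,\qquad i = 1,\ldots,d+1,$$
where $\partial_{i,*} u := \nabla u \cdot e_i$ is the $i$-th extrinsic component of the tangential gradient of $u$ on $\Sph^d$. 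Assuming enough regularity of $u$ that the map $\xi\mapsto (u)_{\Psi_\xi}\in W^{1,2}(\Sph^d)$ is differentiable at $\xi=0$ (the general $W^{1,2}$-case following by approximation by smooth functions), the minimality at $\xi=0$ yields the Euler--Lagrange equations $\langle r, \phi_i\rangle_{W^{1,2}} = 0$ for each $i$.

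Writing $u = 1 + r$ and splitting $\phi_i = \tfrac{d-4}{2}\omega_i + \phi_i^{(1)}$ with $\phi_i^{(1)} := \tfrac{d-4}{2}\omega_i r - 2\partial_{i,*} r$, one integration by parts with $-\Delta\omega_i = d\omega_i$ gives
$$\langle r, \tfrac{d-4}{2}\omega_i\rangle_{W^{1,2}} = \frac{(d-4)(d+1)}{2}\int_{\Sph^d} \omega_i r\,d\omega.$$
Since $d>4$, the prefactor is strictly positive, so the lemma reduces to showing $|\langle r, \phi_i^{(1)}\rangle_{W^{1,2}}| \lesssim \|r\|^2_{W^{1,2}}$. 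The multiplicative piece $\langle r, \omega_i r\rangle_{W^{1,2}}$ is handled by expanding $\nabla(\omega_i r) = r\nabla\omega_i + \omega_i\nabla r$ and using $\int r\nabla r\cdot\nabla\omega_i = \tfrac{d}{2}\int \omega_i r^2$, producing a sum of $\int \omega_i r^2$ and $\int \omega_i |\nabla r|^2$ terms that are manifestly controlled by $\|r\|^2_{W^{1,2}}$. The $L^2$ part $\int r\,\partial_{i,*} r\,d\omega = \tfrac12 \int \partial_{i,*}(r^2)\,d\omega$ of the other inner product is evaluated via the auxiliary identity $\int_{\Sph^d}\partial_{i,*} g\,d\omega = d\int_{\Sph^d}\omega_i g\,d\omega$, which I would prove by applying the Euclidean divergence theorem to the $0$-homogeneous extension of $g$ to $\R^{d+1}\setminus\{0\}$.

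The critical step is the gradient part $\int \nabla r \cdot \nabla(\partial_{i,*} r)\,d\omega$, since naive integration by parts brings in second derivatives of $r$ that we do not control. The key structural fact is that $V_i := \nabla\omega_i$ is a gradient conformal Killing field on $(\Sph^d, g_*)$: since $\mathrm{Hess}(\omega_i) = -\omega_i g_*$, one has $\mathcal L_{V_i} g_* = -2\omega_i g_*$. This yields the commutator identity $[V_i, \Delta] f = 2\omega_i \Delta f - (d-2) V_i f$, which I would verify directly on a spherical-harmonic basis or via Bochner's formula. Combining this identity with two integrations by parts (for $-\Delta$ as well as for the vector field $V_i$, whose formal $L^2$-adjoint is $-V_i + d\omega_i$), the higher-order terms telescope to give the clean expression
$$\int_{\Sph^d} \nabla r \cdot \nabla(\partial_{i,*} r)\,d\omega = \frac{d-2}{2} \int_{\Sph^d} \omega_i |\nabla r|^2\,d\omega,$$
bounded by $\tfrac{d-2}{2}\|\nabla r\|^2_2$. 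Dividing the rearranged Euler--Lagrange equation by $(d-4)(d+1)/2$ then yields the desired estimate. I expect this conformal-Killing cancellation to be the main obstacle: it is precisely the special structure of the first spherical harmonics that allows a first-order optimality condition to produce a bound by a $W^{1,2}$-quadratic quantity, without needing $r \in W^{2,2}$.
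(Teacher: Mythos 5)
Your proposal is correct and follows essentially the same strategy as the paper: differentiate $\xi\mapsto\|(u)_{\Psi_\xi}-1\|_{W^{1,2}}^2$ at the minimizer $\xi=0$, identify $\partial_{\xi_i}\big|_{\xi=0}(u)_{\Psi_\xi}=\tfrac{d-4}{2}\omega_i u - 2\,\nabla\omega_i\cdot\nabla u$ (your $\partial_{i,*}u=e_i\cdot\nabla u$ equals $(e_i-\omega_i\omega)\cdot\nabla u=\nabla\omega_i\cdot\nabla u$ since $\omega\cdot\nabla u=0$), peel off the linear-in-$r$ term, and show the remainder is $O(\|r\|_{W^{1,2}}^2)$ without invoking second derivatives of $r$.

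The one place where you genuinely differ is in how you defuse the dangerous term $\int_{\Sph^d}\nabla r\cdot\nabla(\nabla\omega_i\cdot\nabla r)\,\mathrm d\omega$. The paper packages the cancellation as the general pointwise inequality of Lemma~\ref{lem:productrule}, valid for any gradient field $X$ and arbitrary vector field $Y$, then integrates together with $\int(e_i-\omega_i\omega)\cdot\nabla(|\nabla r|^2)\,\mathrm d\omega = d\int\omega_i|\nabla r|^2\,\mathrm d\omega$. You instead exploit the specific conformal Killing structure of $V_i=\nabla\omega_i$, namely $\mathrm{Hess}(\omega_i)=-\omega_i g_*$, to obtain the \emph{exact} identity $\int\nabla r\cdot\nabla(\nabla\omega_i\cdot\nabla r)\,\mathrm d\omega=\tfrac{d-2}{2}\int\omega_i|\nabla r|^2\,\mathrm d\omega$. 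Both rest on the same observation (symmetry of the Hessian of $r$ cancels the would-be second-order terms), and indeed your identity can be read off directly by running the proof of Lemma~\ref{lem:productrule} with $Y=\nabla\omega_i$ and noting that the ``error'' terms assemble exactly into $\mathrm{Hess}(\omega_i)(\nabla r,\nabla r)=-\omega_i|\nabla r|^2$; the paper's lemma trades this sharpness for generality. One small remark: your alternative route via the commutator $[V_i,\Delta]f=2\omega_i\Delta f-(d-2)V_i f$ temporarily introduces $\Delta r$, so it genuinely needs the smoothness-plus-density reduction you flagged at the outset; the pointwise Hessian argument (yours, or the paper's Lemma~\ref{lem:productrule}) avoids any appearance of second derivatives of $r$ before integration and is a bit more economical on this point. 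All constants you record are correct.
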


\begin{proof}
	In the proof we may assume that $u$ is smooth (which is the only case needed later in this paper), for the more general assertion then follows by approximation. By assumption,
	$$
	\xi \mapsto \| (u)_{\Psi_\xi} - 1 \|_{W^{1,2}}^2
	$$
	assumes its infimum at $\xi=0$. Thus, its gradient with respect to $\xi$ vanishes there, which means that
	$$
	\left\langle r, \partial_{\xi_i}|_{\xi=0} (u)_{\Psi_\xi} \right\rangle_{W^{1,2}} = 0 \,,
	\qquad i = 1,\ldots, d+1 \,.
	$$
	
	In the following we fix $i\in\{1,\ldots,d+1\}$ and look for a more explicit form of the corresponding orthogonality condition. We have $(u)_{\Psi_\xi} = J_{\Psi_\xi}^\frac{d-4}{4d} (u\circ\Psi_\xi)$, so
	$$
	\partial_{\xi_i}|_{\xi=0} (u)_{\Psi_\xi} = \tfrac{d-4}4 \left( \partial_{\xi_i}|_{\xi=0} J_{\Psi_\xi}^\frac{1}{d} \right) \ u
	+ \partial_{\xi_i}|_{\xi=0} (u\circ\Psi_\xi) \,.
	$$
	Since
	$$
	J_{\Psi_\xi}(\omega)^\frac1d = 1+ 2\xi\cdot\omega + \mathcal O(|\xi|^2) \qquad \text{and}\qquad
	\Psi_\xi(\omega) = \omega - 2 (\xi-\xi\cdot\omega\,\omega) + \mathcal O(|\xi|^2) \,,
	$$we find
	$$
	\partial_{\xi_i}|_{\xi=0} (u)_{\Psi_\xi} = \tfrac{d-4}2 \,\omega_i \, u
	- 2 (e_i - \omega_i\,\omega)\cdot\nabla u 
	= \tfrac{d-4}2 \,\omega_i \, (1+r)
	- 2 (e_i - \omega_i\,\omega)\cdot\nabla r \,.
	$$
	Therefore, the $i$-th orthogonality condition becomes
	$$
	\left\langle r, \tfrac{d-4}2 \,\omega_i \, (1+r)
	- 2 (e_i - \omega_i\,\omega)\cdot\nabla r \right\rangle_{W^{1,2}} = 0 \,,
	$$
	or, equivalently,
	\begin{align}
		\label{eq:almostorthoproof}
			\left\langle r, \,\omega_i \right\rangle_{W^{1,2}}
		= \left\langle r, \tfrac4{d-4} (e_i - \omega_i\,\omega)\cdot\nabla r-\omega_i \, r   \right\rangle_{W^{1,2}} \,.
	\end{align}
	Note that the left side is linear in $r$, while the right side is quadratic in $r$. Using $-\Delta \omega_i = d \omega_i$ we see that the left side is equal to
	\begin{equation}
		\label{eq:almostorthoproof1}
		\left\langle r, \,\omega_i \right\rangle_{W^{1,2}} = (d+1) \int_{\Sph^d} \omega_i \, r \,\mathrm d\omega \,.
	\end{equation}
	Thus, the lemma will follow if we can bound the right side of \eqref{eq:almostorthoproof} by $\| r \|_{W^{1,2}}^2$. This is clear for the term containing $\omega_i \, r$. An argument is needed, however, for the term containing $(e_i - \omega_i\,\omega)\cdot\nabla r$. Since the $W^{1,2}$-norm involves a derivative, it seems like two derivatives of $r$ are needed, which is why we assumed that $u$ is smooth. We show now, however, that it can be controlled by a single derivative. The idea will be to integrate by parts. Indeed, the following lemma shows that
	$$
	\left| \nabla r \cdot \nabla \left(\left(e_i - \omega_i\,\omega \right) \cdot\nabla r \right) - \tfrac12 \left(e_i - \omega_i\,\omega \right)\cdot\nabla (|\nabla r|^2) \right| \lesssim |\nabla r|^2 \,.
	$$
	Since
	$$
	\int_{\Sph^d} \left(e_i - \omega_i \,\omega \right)\cdot\nabla (|\nabla r|^2) \,\mathrm d\omega = d \int_{\Sph^d} \omega_i |\nabla r|^2\,\mathrm d\omega \,,
	$$
	which is controlled by $\| \nabla r\|_2^2$, we deduce that
	$$
\left| \left\langle r, \tfrac4{d-4} (e_i - \omega_i\,\omega)\cdot\nabla r-  \omega_i \, r  \right\rangle_{W^{1,2}} \right| \lesssim \| r \|_{W^{1,2}}^2 \,.
	$$
	Therefore, \eqref{eq:almostorthoproof} and \eqref{eq:almostorthoproof1} imply the assertion of the lemma.
\end{proof}

The following simple observation was used in the proof of the previous lemma.

\begin{lemma}\label{lem:productrule}
	Let $X$ and $Y$ be (tangent) vector fields on $\Sph^d$ with $X$ being a gradient field. Then pointwise on $\Sph^d$,
	$$
	\left| X \cdot \nabla(X\cdot Y) - \tfrac12 Y \cdot \nabla ( |X|^2) \right| \lesssim |X|^2 (|\nabla Y| +  |Y| ) \,.
	$$ 
\end{lemma}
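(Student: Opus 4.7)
The plan rests on the single structural fact that a gradient field has a symmetric covariant derivative: writing $X = \nabla\phi$ for some scalar function $\phi$, one has
\[
\nabla_i X_j = \nabla_i\nabla_j \phi = \nabla_j\nabla_i \phi = \nabla_j X_i
\]
pointwise on $\Sph^d$, since the covariant Hessian of a scalar function is symmetric (the Riemann contribution drops out at this order). Everything else is a product-rule expansion plus a minor curvature bookkeeping step.

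Concretely, I would expand, using the product rule for covariant differentiation,
\begin{align*}
X \cdot \nabla(X \cdot Y) &= X^i \nabla_i (X^j Y_j) = X^i X^j \nabla_i Y_j + X^i Y^j \nabla_i X_j, \\
\tfrac12\, Y \cdot \nabla |X|^2 &= Y^i X^j \nabla_i X_j,
\end{align*}
and subtract. After relabeling dummy indices in the last term one obtains
\[
X \cdot \nabla(X \cdot Y) - \tfrac12\, Y \cdot \nabla |X|^2 = X^i X^j \nabla_i Y_j + X^i Y^j \bigl(\nabla_i X_j - \nabla_j X_i\bigr),
\]
and the bracketed antisymmetrization vanishes by the Hessian symmetry noted above. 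The surviving term $X^i X^j \nabla_i Y_j$ is pointwise bounded by $|X|^2 |\nabla Y|$, which already implies the stated inequality.

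The additional $|X|^2 |Y|$ term on the right-hand side is present only because, in the companion Lemma~\ref{lem:almostortho}, the computations are phrased in ambient coordinates on $\Sph^d\subset \R^{d+1}$ (e.g.\ $Y = e_i - \omega_i\omega$ is treated as an $\R^{d+1}$-valued map). Converting between the ambient directional derivative of a tangent vector field and its intrinsic covariant derivative introduces second-fundamental-form corrections; on the round sphere these are linear in $Y$ with uniformly bounded coefficient, yielding precisely a term of order $|X|^2 |Y|$. I do not anticipate any genuine obstacle here: the proof reduces to the symmetry of $\nabla^2\phi$ followed by a careful accounting of these curvature corrections so that they land in the $|Y|$-factor rather than being swept into $|\nabla Y|$.
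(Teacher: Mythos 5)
Your intrinsic computation is correct and in fact proves a slightly \emph{stronger} statement than the lemma: working in abstract index notation, the product rule gives
\[
X\cdot\nabla(X\cdot Y)=X^iY^j\nabla_iX_j+X^iX^j\nabla_iY_j,\qquad
\tfrac12\,Y\cdot\nabla|X|^2=X^iY^j\nabla_jX_i,
\]
so after subtraction the cross terms combine into $X^iY^j(\nabla_iX_j-\nabla_jX_i)$, which vanishes because $X=\nabla\phi$ has symmetric covariant Hessian. One is left with the \emph{exact identity}
\[
X\cdot\nabla(X\cdot Y)-\tfrac12\,Y\cdot\nabla|X|^2=X^iX^j\nabla_iY_j=(\nabla_XY)\cdot X,
\]
hence $|X\cdot\nabla(X\cdot Y)-\tfrac12 Y\cdot\nabla|X|^2|\le|X|^2|\nabla Y|$, with no $|Y|$-term needed. (A small quibble: the Riemann tensor plays no role at all for the scalar Hessian; its symmetry is simply the torsion-free condition, not a matter of ``dropping out at this order''.) The paper proceeds differently: it expands in a local orthonormal frame, where the frame derivatives $E_nY_m$ and the connection coefficients $\Gamma_{nm}^k$, $c_{nm}^k$ appear explicitly. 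The paper does not carry out the cancellations among the $\Gamma$- and $c$-terms (which, as your calculation shows, must occur), but simply bounds them pointwise by $|X|^2|Y|$ using boundedness of the connection coefficients; this is where the extra $|Y|$ on the right-hand side comes from. Your approach buys a cleaner, coordinate-free argument and a sharper constant-free identity.

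One correction to your final paragraph: the $|Y|$-term in the statement is \emph{not} an artifact of an ambient-versus-intrinsic distinction. The entire lemma, as stated and as applied in Lemma~\ref{lem:almostortho}, is phrased intrinsically: $X\cdot Y$ and $|X|^2$ are scalars, their gradients are tangent fields, and $|\nabla Y|$ is the norm of the covariant derivative. No second-fundamental-form correction is involved. The $|Y|$-term is purely a bookkeeping artifact of the frame computation in the paper's proof, as explained above, and your intrinsic argument shows it can be dispensed with entirely. This makes no difference to the applications, since in both uses of the lemma the field $Y$ satisfies $|Y|\lesssim|\nabla Y|$ anyway.
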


The main point of this inequality is that on the right side we do not have derivatives of $X$.

\begin{proof}
	In a smooth local orthonormal frame $(E_n)$ we compute, writing $X_n:= E_n\cdot X$, $Y_n:= E_n\cdot Y_n$,
	$$
	X \cdot \nabla(X\cdot Y) = \sum_n X_n \nabla_{E_n} (X\cdot Y) = \sum_n X_n \left( \nabla_{E_n} X \cdot Y + X \cdot \nabla_{E_n} Y \right).
	$$
	Since $\nabla_{E_n} Z = \sum_m (E_n Z_m) E_m + \sum_{m,k} \Gamma_{nm}^k Z_m E_k$ for a vector field $Z$ on $\mathbb S^d$ with the Christoffel symbols $\Gamma_{nm}^k$, we obtain
	$$
	X \cdot \nabla(X\cdot Y) = \sum_{n,m} X_n \left( (E_n X_m) Y_m + (E_n Y_m) X_m \right) + \sum_{n,m,k} \Gamma_{nm}^k X_n \left( X_m Y_k + Y_m X_k \right).
	$$
	We now use the assumption that $X=\nabla f$ is a gradient field. Then $X_n = E_n f$, and consequently
	\begin{align*}
		X_n (E_nX_m) & = (E_n f) (E_n E_m f) = \tfrac12 E_m ( (E_n f)^2) + (E_n f) ([E_n,E_m]f) \\
		& = \tfrac12 E_m ( (E_n f)^2) + \sum_k c_{nm}^k (E_n f) (E_k f) \,,
	\end{align*}
	where $[E_n,E_m] = \sum_k c_{nm}^k E_k$ for some $c_{nm}^k$. To summarize, for gradient fields $X$ we have shown that
	\begin{align*}
		X \cdot \nabla(X\cdot Y) & = \tfrac12 Y \cdot \nabla ( |X|^2) +  \sum_{n,m} (E_n Y_m) X_n X_m \\
		& \quad + \sum_{n,m,k} c_{nm}^k X_n X_k Y_m
		+ \sum_{n,m,k} \Gamma_{nm}^k X_n \left( X_m Y_k + Y_m X_k \right).
	\end{align*}
	This implies the assertion.
\end{proof}


\subsection{Proof of Proposition \ref{prop:qualstab}}\label{sec:qualstabproof}
We are now in position to prove the main result of this section.

\begin{proof}[Proof of Proposition \ref{prop:qualstab}]
	Since $\| \cdot \|_{W^{1,2}} \lesssim \| \cdot \|_{W^{1,4}}$ and since $\inf_\Psi \|(u_j)_\Psi -1 \|_{W^{1,4}}\to 0$, we may assume, after discarding finitely many $j$, that $\inf_\Psi \|(u_j)_\Psi -1 \|_{W^{1,2}}<\| 1\|_{W^{1,2}}$. Consequently, by Lemma \ref{lem:att}, for every $j$ there is a M\"obius transformation $\Psi_j$ such that
	$$
	\| (u_j)_{\Psi_j} - 1 \|_{W^{1,2}} =  \inf_\Psi \|(u_j)_\Psi -1 \|_{W^{1,2}} \,.
	$$
	This proves the equality in \eqref{eq:distancebounds}, and the inequality there follows from Lemma \ref{lem:att} as well.
	
	Using the elementary inequality, valid for all $\rho\in\R$,
	$$
	\left| | 1+ \rho |^\frac{4d}{d-4} - 1 - \frac{4d}{d-4} \rho \right| \lesssim \rho^2 + |\rho|^\frac{4d}{d-4} \,,
	$$
	we deduce that
	\begin{equation}
		\label{eq:orthoproof1}
		\left| \int_{\Sph^d} |1+r_j|^\frac{4d}{d-4} \,\mathrm d\omega - |\Sph^d| - \frac{4d}{d-4} \int_{\Sph^d} r_j \,\mathrm d\omega \right| \lesssim \|r_j\|_2^2 + \|r_j\|_\frac{4d}{d-4}^\frac{4d}{d-4} \,.
	\end{equation}
	Since
	$$
	\int_{\Sph^d} |1+r_j|^\frac{4d}{d-4} \,\mathrm d\omega = \int_{\Sph^d} |(u_j)_{\Psi_j}|^\frac{4d}{d-4} \,\mathrm d\omega = \int_{\Sph^d} |u_j|^\frac{4d}{d-4} \,\mathrm d\omega = |\Sph^d| \,,
	$$
	the first two terms on the left side of \eqref{eq:orthoproof1} cancel, and we obtain the first almost-orthogonality property in \eqref{eq:ortho}.
	
	The remaining almost-orthogonality properties in \eqref{eq:ortho} follow from Lemma \ref{lem:almostortho} applied to $u = (u_j)_{\Psi_j}$.
\end{proof}


\section{Local analysis}\label{sec:3}

In this section we shall prove Proposition \ref{prop:loc}, which states the validity of the stability inequality for functions close to the set of optimizers.

\subsection{Outline of the proof}

Throughout this section we assume that $(u_j)\subset W^{1,4}(\Sph^d)$ satisfies $\|u_j\|_{\frac{4d}{d-4}} = \|1\|_\frac{4d}{d-4}$ and $\inf_\Psi \| (u_j)_\Psi -1 \|_{W^{1,4}}\to 0$. According to Proposition \ref{prop:qualstab}, there are M\"obius transformations $(\Psi_j)$ such that we can write
$$
(u_j)_{\Psi_j} = 1 + r_j \,,
$$
where $r_j$ satisfies
$$
\| r_j \|_{W^{1,4}} \to 0
$$
as well as
\begin{equation}
	\label{eq:almostortholoc}
	\left| \int_{\Sph^d} r_j \, \mathrm d\omega \right| \lesssim \| r_j \|_2^2 + \|r_j\|_\frac{4d}{d-4}^\frac{4d}{d-4} \qquad \text{and} \qquad 
	\left| \int_{\Sph^d} \omega_i \, r_j \, \mathrm d\omega \right| \lesssim \| r_j \|_{W^{1,2}}^2 \,,\ i=1,\ldots,d+1 \,.
\end{equation}
Our goal will be to bound $F_2[u_j] - S_d^{(2)}$ from below in terms of $\|r_j\|_{W^{1,2}}^2 + \|r_j\|_{W^{1,4}}^4$.

In order to state our initial remainder bound, we introduce the functionals
\begin{align*}
	E_2[u] & \coloneqq \int_{\Sph^d} e_2(u)\,\mathrm d\omega \,,\\
	\mathcal I_2[r]&\coloneqq \frac{4(d-1)}{d-4}\int_{\mathbb S^d}\left(|\nabla r|^2
	- d \, r^2\right)\mathrm d \omega\,, \\ 
	\mathcal I_3[r]&\coloneqq	\int_{\mathbb S^d}  \left(\frac{4(d-2)}{d-4}r|\nabla r|^2+ \frac{d(d-1)}{2} \, r^3\right) \mathrm d\omega\,,\\	
	\mathcal I_4[r]&\coloneqq \int_{\mathbb S^d}  \left(\frac{1}{2}\left(\frac{4}{d-4}\right)^3|\nabla r|^4+\frac{2(d-2)}{d-4} \, r^2|\nabla r|^2+ \frac{d(d-1)}8\, r^4\right) \mathrm d\omega\,.
\end{align*}	
In terms of these functions the following bound is valid.
	
\begin{lemma}\label{lem:locinitial}
	We have, as $j\to\infty$,
	\begin{align*}
		E_2[u_j] - S_d^{(2)} \|u_j\|_{\frac{4d}{d-4}}^4
		& \geq \mathcal I_2[r_j] + \mathcal I_3[r_j] + \mathcal I_4[r_j]  
		+ \left( \frac{4}{d-4} \right)^3 \int_{\Sph^d} \left( \sigma_1(1+r_j) - \sigma_1(1) \right) |\nabla r_j|^2 \,\mathrm d\omega \\
		& \quad + o(\| r_j \|_{W^{1,2}}^2 + \| r_j \|_{W^{1,4}}^4) \,.
	\end{align*}
\end{lemma}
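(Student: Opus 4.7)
The plan is to combine conformal invariance, an exact algebraic expansion of the energy density, and the $L^{4d/(d-4)}$-normalization to eliminate the linear contribution. First, since both $F_2$ and the $L^{4d/(d-4)}$-norm are conformally invariant, I would use $(u_j)_{\Psi_j} = 1 + r_j$ together with $\|u_j\|_{4d/(d-4)} = \|1\|_{4d/(d-4)}$ to reduce the left-hand side to $E_2[1+r_j] - E_2[1]$, where $E_2[1] = \tfrac{d(d-1)}{8}|\Sph^d| = S_d^{(2)}\|1\|_{4d/(d-4)}^4$.

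Next, I would expand $e_2(1+r_j)$ pointwise. Since $e_2(u)$ is polynomial in $u, \nabla u, \Delta u$ --- $\sigma_1$ itself being polynomial in these --- the substitution $u = 1+r_j$ together with the splitting $\sigma_1(1+r_j) = \sigma_1(1) + (\sigma_1(1+r_j) - \sigma_1(1))$ yields an \emph{exact} identity of the form
$$ e_2(1+r_j) = \tfrac{d(d-1)}{8} + \tfrac{d(d-1)}{2}\, r_j + \tfrac{4(d-1)}{d-4}|\nabla r_j|^2 + \tfrac{3d(d-1)}{4}\, r_j^2 + [\text{cubic}] + [\text{quartic}] + \big(\tfrac{4}{d-4}\big)^3\big(\sigma_1(1+r_j) - \sigma_1(1)\big)|\nabla r_j|^2, $$
where a direct bookkeeping shows that the bracketed cubic and quartic combinations are, respectively, the pointwise integrands of $\mathcal I_3[r_j]$ and $\mathcal I_4[r_j]$. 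To match the integrated quadratic part with $\mathcal I_2[r_j]$, I must cancel the linear term $\tfrac{d(d-1)}{2}\int r_j\, d\omega$. For this I would invoke the normalization $\int |1+r_j|^p\, d\omega = |\Sph^d|$ with $p = 4d/(d-4)$; the elementary Taylor bound $\big|\, |1+r|^p - 1 - pr - \tfrac{p(p-1)}{2} r^2\, \big| \lesssim |r|^3 + |r|^p$ then gives
$$ \int r_j\, d\omega = -\tfrac{p-1}{2}\int r_j^2\, d\omega + O\big(\|r_j\|_3^3 + \|r_j\|_p^p\big). $$
Substituting, and using the algebraic identity $\tfrac{3d(d-1)}{4} - \tfrac{d(d-1)(p-1)}{4} = \tfrac{d(d-1)(4-p)}{4} = -\tfrac{4d(d-1)}{d-4}$, the $|\nabla r_j|^2$ and $r_j^2$ contributions combine into exactly $\mathcal I_2[r_j]$, modulo an error of order $\|r_j\|_3^3 + \|r_j\|_p^p$.

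The last step is to show this error is $o(\|r_j\|_{W^{1,2}}^2 + \|r_j\|_{W^{1,4}}^4)$. By the Sobolev embedding $W^{1,4} \hookrightarrow L^p$ and $p>4$, one has $\|r_j\|_p^p \lesssim \|r_j\|_{W^{1,4}}^p = o(\|r_j\|_{W^{1,4}}^4)$ since $\|r_j\|_{W^{1,4}} \to 0$. For $\|r_j\|_3^3$, interpolating $L^3$ between $L^2$ and $L^p$ combined with a scaled Young inequality gives $\|r_j\|_3^3 \leq \varepsilon \|r_j\|_{W^{1,2}}^2 + C_\varepsilon \|r_j\|_{W^{1,4}}^p$ for every $\varepsilon > 0$, which together with the previous bound yields the desired $o(\cdot)$. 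The hard part, in my view, is the bookkeeping in the quadratic reduction: it is essential and not a priori evident that the coefficient $p-1 = (3d+4)/(d-4)$ coming from the constraint conspires with the coefficient $\tfrac{3d(d-1)}{4}$ coming from $(1+r)^4$ to produce exactly the negative mass term $-\tfrac{4d(d-1)}{d-4}$ of $\mathcal I_2[r_j]$. Note that the almost-orthogonality conditions of Proposition~\ref{prop:qualstab} are not needed in this lemma --- they will enter in the subsequent step of bounding $\mathcal I_2[r_j]$ from below modulo its low-eigenvalue subspace.
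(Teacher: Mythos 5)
Your proof is correct, and your observation that the expansion of $e_2(1+r_j)$ is an \emph{exact pointwise identity} (the paper only says ``after simple computations'') with cubic and quartic parts matching the integrands of $\mathcal I_3$ and $\mathcal I_4$ is right. The overall strategy is the one the paper uses, but the mechanism by which you dispose of the linear term $\tfrac{d(d-1)}{2}\int r_j\,\mathrm d\omega$ is genuinely different. The paper does not invoke the normalization $\|u_j\|_{4d/(d-4)}=\|1\|_{4d/(d-4)}$ in this lemma at all: it Taylor-\emph{upper}-bounds $|1+\rho|^{4d/(d-4)}$ by a polynomial plus a remainder with a tunable parameter $\kappa$ (eq.~\eqref{eq:elementary}), uses concavity of $t\mapsto t^{(d-4)/d}$ to pass to an upper bound on $\|1+r_j\|_{4d/(d-4)}^4$, subtracts, and then in a final step chooses $\kappa=\kappa_j\to 0$ slowly enough that $C_{\kappa_j}\|r_j\|_{4d/(d-4)}^{4d/(d-4)} = o(\|r_j\|_{4d/(d-4)}^4)$; this requires tracking the explicit dependence $C_\kappa\sim\kappa^{-(d+12)/(d-4)}$. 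You instead use the standing normalization as an exact constraint to write $\int r_j\,\mathrm d\omega = -\tfrac{p-1}{2}\int r_j^2\,\mathrm d\omega + O(\|r_j\|_3^3+\|r_j\|_p^p)$ and substitute, then dispose of the cubic error by $L^2$--$L^p$ interpolation for $L^3$ plus Young. Your version yields a (one-sided, but actually two-sided) estimate without a free parameter, at the cost of making the lemma logically depend on the normalization; the paper's version is slightly more general in that the bound holds without that constraint, which is why the paper can keep $\|u_j\|_{4d/(d-4)}^4$ as a genuine variable quantity on the left side. Both arguments are sound, and your algebraic check that $\tfrac{3d(d-1)}{4}-\tfrac{d(d-1)(p-1)}{4}=-\tfrac{4d(d-1)}{d-4}$ reproduces the $\mathcal I_2$ coefficient is correct. (Incidentally, the coefficient $\tfrac{2(3d-4)}{d-4}$ in the paper's display \eqref{eq:bddsigma0} appears to be a typographical slip for $\tfrac{2(3d+4)}{d-4}$, as your computation confirms.)
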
	

Note that the functionals $\mathcal I_k$ are homogeneous of degree $k$. The integral term in the above bound is not homogeneous, but since the difference $\sigma_1(1+r_j) - \sigma_1(1)$ vanishes linearly with $r_j$, its integrand vanishes cubically with $r_j$.

\begin{proof}
	By conformal invariance, we have
	$$
	E_2[u_j] = E_2[(u_j)_{\Psi_j}] = E_2[1+r_j]
	\qquad\text{and}\qquad
	\|u_j\|_{\frac{4d}{d-4}}^4 = \|(u_j)_{\Psi_j} \|_{\frac{4d}{d-4}}^4 = \|1+r_j \|_{\frac{4d}{d-4}}^4\,.
	$$
	Concerning the energy function $E_2$, we expand $e_2(1+r_j)$ and find after simple computations
	\begin{align}\label{eq:bddsigma2}
		E_2[1+r_j] & = \frac{d(d-1)}{8}\, |\Sph^d| + \frac{d(d-1)}{2} \int_{\Sph^d} r_j \,\mathrm d\omega \notag \\
		& \quad + \frac{4(d-1)}{d-4} \int_{\Sph^d} \left( |\nabla r_j|^2 + \frac{3d(d-4)}{16}\, r_j^2 \right) \mathrm d\omega \notag \\
		& \quad + \mathcal I_3[r_j] + \mathcal I_4[r_j] \notag \\
		& \quad + \left( \frac{4}{d-4} \right)^3 \int_{\Sph^d} \left( \sigma_1(1+r_j) - \sigma_1(1) \right) |\nabla r_j|^2 \,\mathrm d\omega \,.
	\end{align}
	Thus, our task is to find an upper bound on $\|1+r_j \|_{\frac{4d}{d-4}}^4$. For this purpose, we use the elementary fact that for every $\kappa>0$ there is a constant $C_\kappa$ such that for all $\rho\in\R$ one has
	\begin{equation}
		\label{eq:elementary}
		|1+\rho|^\frac{4d}{d-4} \leq 1+ \frac{4d}{d-4} \rho + \frac{d}{d-4} \left(  \frac{2(3d+4)}{d-4} +\kappa \right) \rho^2 + \frac{d}{d-4} C_\kappa |\rho|^\frac{4d}{d-4} \,;
	\end{equation}
	see, e.g., \cite[Lemma 3.2]{Figalli2015}. In fact, $C_\kappa = C(1+\kappa^{-\frac{d+12}{d-4}})$ can be chosen with a constant $C$ depending only on $d$. To see this, we note that there are constants $C'$ and $C''$ such that
	$$
	|1+\rho|^\frac{4d}{d-4} \leq 1+ \frac{4d}{d-4} \rho + \frac{d}{d-4} \frac{2(3d+4)}{d-4} \rho^2 + C' |\rho|^3 + C'' |\rho|^\frac{4d}{d-4} \,;
	$$
	The claimed bound \eqref{eq:elementary} now follows from
	$$
	\frac{d-4}{d} C' |\rho|^3 \leq \kappa \rho^2 + C''' \kappa^{-\frac{d+12}{d-4}} |\rho|^\frac{4d}{d-4}
	$$
	for some constant $C'''$.
	
	We take $\rho = r_j(\omega)$ in \eqref{eq:elementary} and integrate the resulting inequality with respect to $\omega$. By concavity of $t\mapsto t^{(d-4)/d}$, we obtain
	\begin{equation}\label{eq:bddsigma0}
		\|1+ r_j\|_{\frac{4d}{d-4}}^4 \leq |\mathbb S^d|^{\frac{d-4}{d}} \left( 1+|\mathbb S^d|^{-1}\int_{\mathbb S^d}\left(4 r_j+\left(\frac{2(3d-4)}{d-4}+\kappa\right) r_j^2+ C_\kappa | r_j|^{\frac{4d}{d-4}}\right)\mathrm d\omega\right).
	\end{equation}
	
	Combining \eqref{eq:bddsigma2} and \eqref{eq:bddsigma0} and recalling that $S_d^{(2)} = (d(d-1)/8) |\Sph^d|^{4/d}$, we obtain
	\begin{align*}
		E_2[1+r_j] - S_d^{(2)} \|1+r_j\|_\frac{4d}{d-4}^4
		& \geq \mathcal I_2[r_j] + \mathcal I_3[r_j] + \mathcal I_4[r_j] \\
		& \quad + \left( \frac{4}{d-4} \right)^3 \int_{\Sph^d} \left( \sigma_1(1+r_j) - \sigma_1(1) \right) |\nabla r_j|^2 \,\mathrm d\omega \\
		& \quad - \frac{d(d-1)}{8} \kappa \int_{\Sph^d} r_j^2\,\mathrm d\omega 
		- \frac{d(d-1)}{8} C_\kappa \int_{\Sph^d} |r_j|^\frac{4d}{d-4} \,\mathrm d\omega \,.
	\end{align*}
	The assertion of the lemma now follows from the fact that
	$$
	\frac{d(d-1)}{8} \kappa \int_{\Sph^d} r_j^2\,\mathrm d\omega 
	+ \frac{d(d-1)}{8} C_\kappa \int_{\Sph^d} |r_j|^\frac{4d}{d-4} \,\mathrm d\omega
	= o( \| r_j \|_2^2 + \| r_j\|_\frac{4d}{d-4}^4 ) \,.
	$$
Indeed, since $4d/(d-4)>4$, it is possible to choose a sequence $(\kappa_j)$ that tends to zero so slowly that $C_{\kappa_j} \|r_j\|_\frac{4d}{d-4}^\frac{4d}{d-4}= o( \| r_j\|_\frac{4d}{d-4}^4 )$.
\end{proof}

To proceed, we will decompose $r_j$ into three different pieces, corresponding to spherical harmonics of certain degrees. We recall that we have an orthogonal decomposition
$$
L^2(\Sph^d) = \bigoplus_{\ell=0}^\infty \mathcal H_\ell \,,
$$
where $\mathcal H_\ell$ is the space of spherical harmonics of degree $\ell$. For more details on spherical harmonics, we refer to \cite[p.~137--152]{Stein1990}. We denote by $\Pi_\ell$ the orthogonal projection in $L^2(\Sph^d)$ onto $\mathcal H_\ell$. Given a parameter $L\geq 1$, we will decompose
$$
r_j = r_j^\lo + r_j^\me + r_j^\hi
$$
with
$$
r_j^\lo := \sum_{\ell=0}^1 \Pi_\ell r_j \,,
\qquad
r_j^\me := \sum_{\ell=2}^L \Pi_\ell r_j \,,
\qquad
r_j^\me := \sum_{\ell=L+1}^\infty \Pi_\ell r_j \,.
$$
We assume throughout that $L$ is independent of $j$. It will be chosen in the proof of Proposition~\ref{prop:loc}. As with all the constants in this paper, its final choice may depend on $d$.

We now state the key technical ingredient in the proof of Proposition~\ref{prop:loc}. It says that for the terms on the right side in Lemma \ref{lem:locinitial} that are not quadratic (that is, all terms except for $\mathcal I_2[r_j]$) only the high frequency component $r_j^\hi$ contributes up to the order that we are interested in.

\begin{lemma}\label{lem:locsimplified}
	For every fixed choice of $L$, we have
	\begin{align}
		\label{eq:locsimplified1a}
		\| r_j \|_{W^{1,2}}^2 & =  \| r_j^\me \|_{W^{1,2}}^2 + \| r_j^\hi \|_{W^{1,2}}^2 +  o(\|r_j\|_{W^{1,2}}^2 + \| r_j \|_{W^{1,4}}^4)   \,,\\
		\label{eq:locsimplified1b}
		\| r_j \|_{W^{1,4}}^4 & =  \| r_j^\hi \|_{W^{1,4}}^4 + o(\|r_j\|_{W^{1,2}}^2 + \| r_j \|_{W^{1,4}}^4) \,.
	\end{align}
	Moreover,	
	\begin{align}
		\label{eq:locsimplified2a}
		\mathcal I_2[r_j] & = \mathcal I_2[r_j^\me] + \mathcal I_2[r_j^\hi] + o(\|r_j\|_{W^{1,2}}^2 + \| r_j \|_{W^{1,4}}^4)\,,\\
		\label{eq:locsimplified2b}
		\mathcal I_3[r_j] & = \mathcal I_3[r_j^\hi] + o( \|r_j\|_{W^{1,2}}^2) \,,\\
		\label{eq:locsimplified2c}
		\mathcal I_4[r_j] & = \mathcal I_4[r_j^\hi] + o( \|r_j\|_{W^{1,2}}^2 + \|r_j\|_{W^{1,4}}^4)\,,
	\end{align}
	and
	\begin{align}
	\notag
		\int_{\Sph^d} &\left( \sigma_1(1+r_j) - \sigma_1(1) \right) |\nabla r_j|^2 \,\mathrm d\omega \\&= \int_{\Sph^d} \left( \sigma_1(1+r_j) - \sigma_1(1) \right) |\nabla r_j^\hi|^2 \,\mathrm d\omega + o( \|r_j\|_{W^{1,2}}^2 + \|r_j\|_{W^{1,4}}^4) \,.	\label{eq:locsimplified3}
	\end{align}
\end{lemma}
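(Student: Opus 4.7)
My plan rests on three structural facts about the spherical harmonic decomposition $r_j = r_j^\lo + r_j^\me + r_j^\hi$; I write $b := r_j^\lo + r_j^\me$ for the low-and-medium piece. \emph{Fact 1:} the almost-orthogonality conditions \eqref{eq:almostortholoc}, together with the expression of $r_j^\lo$ as a linear combination of $1,\omega_1,\dots,\omega_{d+1}$, give $\|r_j^\lo\|_2 \lesssim \|r_j\|_{W^{1,2}}^2 + \|r_j\|_{L^{4d/(d-4)}}^{4d/(d-4)}$; the Sobolev embedding $W^{1,4}\hookrightarrow L^{4d/(d-4)}$ together with $4d/(d-4)>4$ makes the second summand $o(\|r_j\|_{W^{1,4}}^4)$, so after squaring $\|r_j^\lo\|_2^2 = o(\|r_j\|_{W^{1,2}}^2 + \|r_j\|_{W^{1,4}}^4)$. \emph{Fact 2:} $r_j^\lo$ and $r_j^\me$ lie in finite-dimensional subspaces (of dimension depending on $L$), so every Sobolev and H\"older norm of $b$ is equivalent to $\|b\|_2$, hence controlled by $\|r_j\|_{W^{1,2}}$; in particular $\|\nabla^k b\|_\infty \lesssim_{L,k} \|r_j\|_{W^{1,2}}$ for every $k$. \emph{Fact 3:} the $W^{1,2}$-inner product diagonalizes in this decomposition with eigenvalue $1+\ell(\ell+d-1)$ on $\mathcal H_\ell$.

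Identity \eqref{eq:locsimplified1a} is then immediate from Fact 3 combined with the quadratic smallness of $\|r_j^\lo\|_{W^{1,2}}$ supplied by Facts 1 and 2. Identity \eqref{eq:locsimplified2a} follows in the same way, since $\mathcal I_2$ diagonalizes in the same basis with coefficient proportional to $\ell(\ell+d-1)-d$, which equals $-d$ at $\ell=0$ and \emph{vanishes} at $\ell=1$; thus $\mathcal I_2[r_j^\lo]$ is a constant multiple of $\|\Pi_0 r_j\|_2^2$, of the desired remainder order by Fact 1. The vanishing of the $\ell=1$ coefficient is precisely why almost-orthogonality to $\omega_i$ only needs to be accurate to quadratic order.

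For identities \eqref{eq:locsimplified1b}, \eqref{eq:locsimplified2b}, \eqref{eq:locsimplified2c} I expand each integrand by writing $r_j = r_j^\hi + b$ and $\nabla r_j = \nabla r_j^\hi + \nabla b$: the difference of the integrand from its $r_j^\hi$-value is a finite sum of mixed monomials, each containing at least one factor of $b$ or $\nabla b$. I then bound each mixed monomial by H\"older's inequality, choosing the exponent pattern so that any $L^2$-factor is placed on a $b$- or $\nabla b$-containing quantity, exploiting the possibility (via Fact 2) of trading $L^p$ for $L^2$ control on $b$. The resulting bounds have the form $\|r_j\|_{W^{1,4}}^a \|r_j\|_{W^{1,2}}^b$ with $a+b$ equal to the degree of the integrand ($3$ for $\mathcal I_3$, $4$ for $\mathcal I_4$ and for the expansion of $\|r_j\|_{W^{1,4}}^4$); Young's inequality with a slowly vanishing parameter $\epsilon_j\to 0$, as already used in the proof of Lemma~\ref{lem:locinitial}, absorbs each such bound into $o(\|r_j\|_{W^{1,2}}^2 + \|r_j\|_{W^{1,4}}^4)$, and into $o(\|r_j\|_{W^{1,2}}^2)$ for $\mathcal I_3$ where one extra power of $\|r_j\|_{W^{1,4}} = o(1)$ is automatically present.

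The main obstacle is \eqref{eq:locsimplified3}, because the expression $\sigma_1(1+r_j)-\sigma_1(1) = -\tfrac{d-4}{4}\Delta r_j - \tfrac{d-4}{8}\Delta(r_j^2) - |\nabla r_j|^2 + d\bigl(\tfrac{d-4}{4}\bigr)^2 r_j + \tfrac{d}{2}\bigl(\tfrac{d-4}{4}\bigr)^2 r_j^2$ contains two Laplacian terms whose $L^p$-norms are not controlled by any quantity at our disposal. I handle them by writing $|\nabla r_j|^2 - |\nabla r_j^\hi|^2 = 2\nabla r_j^\hi\cdot\nabla b + |\nabla b|^2$ and splitting $\Delta r_j = \Delta r_j^\hi + \Delta b$; the $\Delta b$-pieces are harmless by Fact 2. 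For the $\Delta r_j^\hi$-piece I integrate by parts \emph{twice}: first to shift $\partial_k$ off of $\Delta r_j^\hi$ onto the $\nabla b$-factor, which exposes the symmetric combination $\sum_{k,\ell}\partial_k r_j^\hi\,\partial_k\partial_\ell r_j^\hi\,\partial_\ell b = \tfrac12\nabla(|\nabla r_j^\hi|^2)\cdot\nabla b$; then once more to turn this into $-\tfrac12|\nabla r_j^\hi|^2\Delta b$. After these reductions no uncontrolled derivative of $r_j^\hi$ remains, and the H\"older--Young strategy of the preceding paragraph yields the required bound. The quadratic Laplacian contribution $-\tfrac{d-4}{8}\Delta(r_j^2)$ is treated analogously and is in fact easier thanks to its additional smallness in $r_j$.
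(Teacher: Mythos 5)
Your argument follows the same overall plan as the paper: the three‑piece spherical harmonic decomposition, your Fact~1 is the paper's Step~1 of its Part~1, your Fact~2 is exactly the paper's inequality $\|r_j^<\|_{C^k}\lesssim_k\|r_j\|_{W^{1,2}}$, and the expand--H\"older--Young strategy with a slowly vanishing $\epsilon_j$ for \eqref{eq:locsimplified1b}, \eqref{eq:locsimplified2b}, \eqref{eq:locsimplified2c} is identical. Your extra observation for \eqref{eq:locsimplified2a} that the $\ell=1$ eigenvalue of $\mathcal I_2$ vanishes is correct but not needed: since $\|\Pi_1 r_j\|_{W^{1,2}}\lesssim\|r_j\|_{W^{1,2}}^2$ already, one would have $|\mathcal I_2[\Pi_1 r_j]|\lesssim\|r_j\|_{W^{1,2}}^4=o(\|r_j\|_{W^{1,2}}^2)$ regardless of whether that eigenvalue vanishes, so your editorial remark that this vanishing is ``precisely why'' quadratic accuracy suffices is not quite accurate.

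The one genuine divergence from the paper is in \eqref{eq:locsimplified3}. The paper writes $\Delta(2r_j+r_j^2)=2\Div\!\big((1+r_j)\nabla r_j\big)$ so that one integration by parts exposes $(1+r_j)\nabla r_j\cdot\nabla(\,\cdot\,)$, and then applies Lemma~\ref{lem:productrule} with $X=\nabla r_j$, $Y=\nabla r_j^<$; your route splits $\Delta r_j=\Delta r_j^\hi+\Delta b$ and integrates by parts twice on the cross term $\int\Delta r_j^\hi\,(\nabla r_j^\hi\cdot\nabla b)$. Both are valid. Two caveats, though. First, your displayed identity $\sum_{k,\ell}\partial_k r_j^\hi\,\partial_k\partial_\ell r_j^\hi\,\partial_\ell b=\tfrac12\nabla(|\nabla r_j^\hi|^2)\cdot\nabla b$ is flat-space shorthand; on $\Sph^d$ the literal second coordinate derivatives do not commute, and one must use the covariant Hessian (symmetric) or, equivalently, accept lower-order curvature corrections. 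The paper codifies this precisely as Lemma~\ref{lem:productrule}, which delivers the pointwise estimate with only $|X|^2(|\nabla Y|+|Y|)$ on the right and no second derivatives of $X$. Your proof should either invoke that lemma or explicitly work with the covariant Hessian and record the (harmless) commutator terms; as written it silently assumes Euclidean symmetry of $\partial_k\partial_\ell$. Second, you dismiss the quadratic Laplacian contribution $-\tfrac{d-4}{8}\Delta(r_j^2)$ as ``analogous and in fact easier.'' Expanding $\Delta(r_j^2)=2r_j\Delta r_j+2|\nabla r_j|^2$ and running your strategy on the $r_j\Delta r_j$ piece produces a term $\int r_j\,\nabla r_j\cdot\big((\nabla^2 r_j^\hi)\nabla b\big)\,\mathrm d\omega$ which again requires the Hessian-symmetry rewriting $(\nabla r_j^\hi)^{\rm T}(\nabla^2 r_j^\hi)\nabla b=\tfrac12\nabla(|\nabla r_j^\hi|^2)\cdot\nabla b$ and a further integration by parts; it works, but it is not lighter than the linear case, and the paper avoids this bookkeeping precisely by keeping the factor $(1+r_j)$ intact. (Also a trivial sign slip: $-\tfrac12\int\nabla(|\nabla r_j^\hi|^2)\cdot\nabla b=+\tfrac12\int|\nabla r_j^\hi|^2\Delta b$, not $-\tfrac12$; immaterial to the estimate.)
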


We will prove this lemma in the following two subsections. Accepting its conclusion for the moment, we will use it to complete the proof of the local stability result.

\begin{proof}[Proof of Proposition \ref{prop:loc}]
	Let $(u_j)$ be as at the beginning of this section. According to Lemma~\ref{lem:locinitial}, as well as equations \eqref{eq:locsimplified2a}, \eqref{eq:locsimplified2b}, \eqref{eq:locsimplified2c}, and \eqref{eq:locsimplified3} in Lemma \ref{lem:locsimplified}, we have
	\begin{align}\label{eq:locproof}
		E_2[u_j] - S_d^{(2)} \|u_j\|_{\frac{4d}{d-4}}^4
		& \geq \mathcal I_2[r_j^\me] + \mathcal I_2[r_j^\hi] + \mathcal I_3[r_j^\hi] + \mathcal I_4[r_j^\hi] \notag \\
		& \quad - \left( \frac{4}{d-4} \right)^3 \sigma_1(1) \int_{\Sph^d} |\nabla r_j^\hi|^2 \,\mathrm d\omega + o(\| r_j\|_{W^{1,2}}^2 + \|r_j\|_{W^{1,4}}^4) \,.
	\end{align}
	Note that we dropped the term $\int_{\Sph^d} \sigma_1(1+r_j) |\nabla r_j^\hi|^2\,\mathrm d\omega \geq 0$ using the fact that by assumption and by conformal invariance $\sigma_1(1+r_j) = \sigma_1((u_j)_{\Psi_j}) = \sigma_1(u_j)> 0$.
	
	In the following we consider separately the various terms on the right side of \eqref{eq:locproof}. Since the quadratic form $\mathcal I_2$ is positive definite on $(\mathcal H_0 \oplus \mathcal H_1)^\bot$ and since $r_j^\me$ belongs to this space, we have
	$$
	\mathcal I_2[r_j^\me] \gtrsim \| r_j^\me \|_{W^{1,2}}^2 \,.
	$$
	Next, we observe that
	$$
	\mathcal I_2[r_j^\hi] - \left( \frac{4}{d-4} \right)^3 \sigma_1(1) \int_{\Sph^d} |\nabla r_j^\hi|^2 \,\mathrm d\omega \gtrsim \| r_j^\hi \|_{W^{1,2}}^2 \,.
	$$
	Indeed, this is equivalent to the inequality
	$$
	\frac{4(d-1)}{d-4} \left( \ell(\ell+d-1) - d \right) - \frac{2d}{d-4} \, \ell(\ell+d-1) \gtrsim  \ell(\ell+d-1) + 1
	\qquad\text{for all}\ \ell\geq L+1 \,.
	$$
	Using the frequency cut-off, we further bound for later purposes
	$$
	 \| r_j^\hi \|_{W^{1,2}}^2 \geq \frac12 \left( \| r_j^\hi \|_{W^{1,2}}^2 + \left( (L+1)(L+d) + 1 \right)  \| r_j^\hi \|_2^2 \right).
	$$
	
	By dropping the middle term in the definition of $\mathcal I_4$, we find
	$$
	\mathcal I_4[r_j^\hi] \gtrsim \| r_j^\hi \|_{W^{1,4}}^4 \,.
	$$
	Finally, by the Cauchy--Schwarz inequality, we find
	$$
	\mathcal I_3[r_j^\hi] \gtrsim - \| r_j^\hi \|_2 \| r_j^\hi \|_{W^{1,4}}^2 \,.
	$$
	Thus, there is a constant $C$ such that for any $\epsilon>0$ we have
	$$
	\mathcal I_3[r_j^\hi] \geq - \epsilon \| r_j^\hi \|_{W^{1,4}}^4 - C \epsilon^{-1} \| r_j^\hi \|_2^2 \,.
	$$
	Let us show that here the two negative terms on the right side can be controlled by the positive terms coming from the other functionals. We choose $\epsilon>0$ so small (independent of $j$) that the term $- \epsilon \| r_j^\hi \|_{W^{1,4}}^4$ is compensated by half of the term in the lower bound on $\mathcal I_4[r_j^\hi]$. Then we choose $L$ so large that the term $- C \epsilon^{-1} \| r_j^\hi \|_2^2$ is compensated by the term proportional to $((L+1)(L+d)+1) \| r_j^\hi \|_{2}^2$.
	
	Inserting all these bounds into the right side of \eqref{eq:locproof}, we arrive at
	\begin{align*}
		E_2[u_j] - S_d^{(2)} \|u_j\|_{\frac{4d}{d-4}}^4
		& \gtrsim \| r_j^\me \|_{W^{1,2}}^2 + \| r_j^\hi \|_{W^{1,2}}^2 + \|r_j^\hi \|_{W^{1,4}}^4
		+ o(\| r_j\|_{W^{1,2}}^2 + \|r_j\|_{W^{1,4}}^4) \,.
	\end{align*}
	Using \eqref{eq:locsimplified1a} and \eqref{eq:locsimplified1b} in Lemma \ref{lem:locsimplified}, we see that the right side is equal to $\| r_j \|_{W^{1,2}}^2 + \|r_j \|_{W^{1,4}}^4 + o(\| r_j\|_{W^{1,2}}^2 + \|r_j\|_{W^{1,4}}^4)$.
	
	Since
\begin{multline*}
		\inf_\Psi \left( \| (u_j)_\Psi - 1 \|_{W^{1,2}}^2 + \| (u_j)_\Psi - 1 \|_{W^{1,4}}^4 \right) \\\leq \| (u_j)_{\Psi_j} - 1 \|_{W^{1,2}}^2 + \| (u_j)_{\Psi_j} - 1 \|_{W^{1,4}}^4 = \| r_j\|_{W^{1,2}}^2 + \|r_j\|_{W^{1,4}}^4
	\end{multline*}
	and since $\| u_j \|_{\frac{4d}{d-4}}^\frac{4d}{d-4}=|\Sph^d|$, we have shown that
	$$
	\liminf_{j\to\infty} \frac{F_2[u_j] - S_d^{(2)}}{\inf_\Psi \left( \| (u_j)_\Psi - 1 \|_{W^{1,2}}^2 + \| (u_j)_\Psi - 1 \|_{W^{1,4}}^4 \right)} \gtrsim 1
	$$
	with an implicit constant depending only on $d$. This completes the proof of Proposition \ref{prop:loc}.
\end{proof}

It remains to prove Lemma \ref{lem:locsimplified}. This will be accomplished in the following two subsections.

	
\subsection{Controlling the low frequency part}

Our goal in this subsection is to prove the `quadratic' assertions in Lemma \ref{lem:locsimplified}, that is, \eqref{eq:locsimplified1a} and \eqref{eq:locsimplified2a}. In both cases the assertion reduces to the fact that $r_j^\lo$ is `essentially zero'. This will be a consequence of the almost-orthogonality conditions \eqref{eq:almostortholoc}.

\begin{proof}[Proof of Lemma \ref{lem:locsimplified}. Part 1]
	\emph{Step 1.} We shall show that
	\begin{equation*}
		\| r_j^\lo \|_{W^{1,2}} \lesssim \| r_j \|_{W^{1,2}}^2 + \|r_j\|_{W^{1,4}}^4 \,.
	\end{equation*}
	To prove this, we treat the contributions $\Pi_0 r_j$ and $\Pi_1 r_j$ to $r_j^\lo$ separately.
	
	Noting that $\mathcal H_0$ is the space of constant functions, we see that $\Pi_0 r_j$ is the constant with value
	$$
	\Pi_0 r_j = |\Sph^d|^{-1} \int_{\Sph^d} r_j \,\mathrm d\omega \,.
	$$
	By the almost-orthogonality condition \eqref{eq:almostortholoc} and the fact that $\|r_j\|_{W^{1,4}}\to 0$, we have
	$$
	\| \Pi_0 r_j \|_{W^{1,2}} = \| \Pi_0 r_j \|_2 \lesssim \left| \int_{\Sph^d} r_j \,\mathrm d\omega \right| \lesssim \| r_j \|_2^2 + \|r_j\|_\frac{4d}{d-4}^\frac{4d}{d-4} \lesssim \| r_j \|_{W^{1,2}}^2 + \|r_j\|_{W^{1,4}}^4 \,.
	$$
	
	Similarly, $\mathcal H_1$ is the space of linear functions, and we see that
	$$
	(\Pi_1 r_j)(\omega) = \sum_{i=1}^{d+1} \omega_i \ \frac{d+1}{|\Sph^d|} \int_{\Sph^d} \omega'_i \, r_j(\omega')\,\mathrm d\omega' \,.
	$$
	By the second group of almost-orthogonality conditions \eqref{eq:almostortholoc}, we have
	$$
	\| \Pi_1 r_j \|_{W^{1,2}} \lesssim \|\Pi_1 r_j\|_{2} \lesssim \left( \sum_i \left| \int_{\Sph^d} \omega'_i \, r_j(\omega')\,\mathrm d\omega' \right|^2 \right)^{1/2}
	\lesssim \|r_j \|_{W^{1,2}}^2 \,.
	$$
	
	Since $\| r_j^\lo\|_{W^{1,2}}^2 = \| \Pi_0 r_j \|_{W^{1,2}}^2 + \|\Pi_1 r_j\|_{W^{1,2}}^2$, we obtain the claimed bound.
	
	\medskip
	
	\emph{Step 2.} Using the bound from Step 1, it is easy to complete the proof. By orthogonality of spherical harmonics in $L^2(\Sph^d)$ and $W^{1,2}(\Sph^d)$, we have
	\begin{align*}
		\| r_j \|_{W^{1,2}}^2 & =  \| r_j^\lo \|_{W^{1,2}}^2 + \| r_j^\me \|_{W^{1,2}}^2 + \| r_j^\hi \|_{W^{1,2}}^2 \,,\\
		\mathcal I_2[r_j] & = \mathcal I_2[r_j^\lo] + \mathcal I_2[r_j^\me] + \mathcal I_2[r_j^\hi] \,.
	\end{align*}
	The assertion now follows from $|\mathcal I_2[r_j^\lo]|\lesssim \| r_j^\lo\|_{W^{1,2}}$ in view of the bound from Step 1.
\end{proof}


\subsection{Controlling the medium frequency part}

In the previous subsection we have proved \eqref{eq:locsimplified1a} and \eqref{eq:locsimplified2a} in Lemma \ref{lem:locsimplified}. Our goal in the present subsection is to prove the remaining equations in this lemma. Those concern terms that vanish faster than quadratically and the assertion is that only the contribution $r_j^\hi$ is relevant for them. This comes basically from the fact that in the space of spherical harmonics of degree $\leq L$, which is finite-dimensional, the (stronger) norm $W^{1,4}(\Sph^d)$ is equivalent to the (weaker) norm $W^{1,2}(\Sph^d)$.

\begin{proof}[Proof of Lemma \ref{lem:locsimplified}. Part 2]
	In the following proof the distinction between $r_j^\lo$ and $r_j^\me$ is irrelevant, and it is convenient to set
	$$
	r_j^< := r_j^\lo + r_j^\me \,.
	$$
	These functions belong to the space $\bigoplus_{\ell=0}^L \mathcal H_\ell$, so they are smooth as (restrictions to $\Sph^d$ of) polynomials of degree $\leq L$. Moreover, since in this finite-dimensional space all norms are equivalent, for any $k\in\N_0$ there is a constant such that for all $j$ we have
	\begin{equation}
		\label{eq:locsimplifiedproof1}
		\| r_j^< \|_{C^k} \lesssim_k \|r_j \|_{W^{1,2}} \,.
	\end{equation}	
	The constant in this inequality depends on $L$, but it is considered fixed in this proof, and its dependence will not be tracked. In what follows, we will use inequality \eqref{eq:locsimplifiedproof1} only with $k=1$ and $k=2$.
			
	\medskip
	
	\emph{Proof of \eqref{eq:locsimplified1b}.} By expanding the quartic power and using some elementary estimates, we find pointwise on $\Sph^d$,
	\begin{align*}
		\left| |\nabla r_j^\hi|^4 - |\nabla r_j|^4 \right| & \lesssim \sum_{j=1}^4 |\nabla r_j^<|^j |\nabla r_j^\hi|^{4-j} \lesssim \sum_{j=1}^4 |\nabla r_j^<|^j |\nabla r_j|^{4-j}
		\lesssim |\nabla r_j^<| |\nabla r_j|^3 + |\nabla r_j^<|^4 \,.
	\end{align*}
	By H\"older's inequality, this and a similar bound for $| (r_j^\hi)^4 - r_j^4|$ imply that
	$$
	\left| \| r_h^\hi \|_{W^{1,4}}^4 - \| r_j \|_{W^{1,4}}^4 \right| \lesssim \| r_j^< \|_{W^{1,4}}  \| r_j \|_{W^{1,4}}^3 + \| r_j^< \|_{W^{1,4}}^4 \,.
	$$
	According to \eqref{eq:locsimplifiedproof1} with $k=1$ (we only use the weaker inequality with a $W^{1,4}$-norm on the left), this term is 
	$$
	\lesssim \| r_j \|_{W^{1,2}}  \| r_j \|_{W^{1,4}}^3 + \| r_j \|_{W^{1,2}}^4 \,.
	$$
	Here the second term is $o(\| r_j \|_{W^{1,2}}^2)$ and therefore acceptable. For the first term, we bound
	$$
	\| r_j \|_{W^{1,2}}  \| r_j \|_{W^{1,4}}^3 \lesssim \epsilon_j \| r_j \|_{W^{1,2}}^2 + \epsilon_j^{-1} \| r_j \|_{W^{1,4}}^6 \,.
	$$
	Choosing $\epsilon_j = \|r_j\|_{W^{1,4}}$, the right side is $o(\| r_j \|_{W^{1,2}}^2+\|r_j\|_{W^{1,4}}^4)$ as well. This completes the proof of \eqref{eq:locsimplified1b}.
	
	\medskip
	
	\emph{Proof of \eqref{eq:locsimplified2c}.} The terms involving $|\nabla r_j|^4$ and $r_j^4$ in $\mathcal I_4[r_j]$ are handled in the exact same way as in the proof of \eqref{eq:locsimplified1b}. The argument for the remaining term involving $r_j^2|\nabla r_j|^2$ follows along the same lines. We omit the details.
	
	\medskip
	
	\emph{Proof of \eqref{eq:locsimplified2b}.} By expanding the cubic power and using some elementary estimates, we find pointwise on $\Sph^d$,
	\begin{align*}
		\left| r_j^\hi |\nabla r_j^\hi|^2 - r_j |\nabla r_j|^2 \right| & \lesssim |r_j^<| |\nabla r_j|^2 + |r_j^\hi| \left( |\nabla r_j^<|^2 + |\nabla r_j^<||\nabla r_j^\hi| \right) \\
		& \lesssim |r_j^<| \left( |\nabla r_j|^2 + |\nabla r_j^<|^2 \right) + |r_j| \left( |\nabla r_j^<|^2 + |\nabla r_j^<||\nabla r_j| \right).
	\end{align*}
	In view of \eqref{eq:locsimplifiedproof1} with $k=1$, we can write
	$$
	\left| r_j^\hi |\nabla r_j^\hi|^2 - r_j |\nabla r_j|^2 \right|  \lesssim
	\| r_j \|_{W^{1,2}} \left( |\nabla r_j|^2 + \| r_j \|_{W^{1,2}}^2 \right) + |r_j| \left( \| r_j \|_{W^{1,2}}^2 + \| r_j \|_{W^{1,2}} |\nabla r_j| \right).
	$$
	Integrating this and a similar bound for $|(r_j^\hi)^3 - r_j^3|$ over $\Sph^d$ leads to
	\begin{align*}
		\left| \mathcal I_3[r_j^\hi] - \mathcal I_3[r_j] \right|
		& \lesssim \| r_j \|_{W^{1,2}}^3 \,.
	\end{align*}
	Clearly, this is $o(\| r_j \|_{W^{1,2}}^2)$, proving \eqref{eq:locsimplified2b}.
	
	\medskip
	
	\emph{Proof of \eqref{eq:locsimplified3}.} Writing
	$$
	\sigma_1(1+r_j) - \sigma_1(1) = -\frac{d-4}8 \Delta\left(2r_j + r_j^2\right) - |\nabla r_j|^2 + \frac d2 \left( \frac{d-4}{4} \right)^2 \left( 2r_j + r_j^2 \right)
	$$
	and
	$$
	|\nabla r_j|^2 - |\nabla r_j^\hi|^2 = - |\nabla r_j^<|^2 + 2 \nabla r_j^<\cdot \nabla r_j \,,
	$$
	we obtain
	$$
	\int_{\Sph^d} \left( \sigma_1(1+r_j) - \sigma_1(1) \right) \left( |\nabla r_j|^2 - |\nabla r_j^\hi|^2 \right)\mathrm d \omega = A+B
	$$
	with
	\begin{align*}
		A & := - \frac{d-4}8 \int_{\Sph^d} \left( \Delta(2r_j + r_j^2) \right) \left( - |\nabla r_j^<|^2 + 2 \nabla r_j^<\cdot \nabla r_j \right)\mathrm d\omega \,, \\
		B & := \int_{\Sph^d} \left( - |\nabla r_j|^2 + \frac d2 \left( \frac{d-4}{4} \right)^2 \left( 2r_j + r_j^2 \right) \right) \left( - |\nabla r_j^<|^2 + 2 \nabla r_j^<\cdot \nabla r_j \right)\mathrm d \omega \,. 
	\end{align*}
	
	When we multiply out the terms in the integrand of $B$, we see that every term contains at least one factor of $r_j^<$ or $\nabla r_j^<$. Therefore, we can treat this term in the same way as in the proof of \eqref{eq:locsimplified1b} and \eqref{eq:locsimplified2c} (for the terms homogeneous of degree four) and of \eqref{eq:locsimplified2b} (for the terms homogeneous of degree three). We deduce that $B = o(\|r_j\|^2_{W^{1,2}} + \|r_j\|_{W^{1,4}}^4)$.
	
	The argument for $A$ is more involved. We begin by integrating by parts to write
	$$
	A = A_1 + A_2
	$$
	with
	\begin{align*}
		A_1 & := - \frac{d-4}4 \int_{\Sph^d} (1+r_j) \nabla r_j \cdot \nabla \left( |\nabla r_j^<|^2 \right)\mathrm d\omega \,, \\
		A_2 & := \frac{d-4}2 \int_{\Sph^d} (1+r_j) \nabla r_j \cdot \nabla \left( \nabla r_j^<\cdot \nabla r_j \right)\mathrm d \omega \,.
	\end{align*}
	From \eqref{eq:locsimplifiedproof1} with $k=2$, we know that $\nabla \left(|\nabla r_j^<|^2\right)$ is bounded pointwise by $\| r_j \|_{W^{1,2}}^2$. Thus,
	$$
	|A_1| \lesssim ( 1 + \|r_j\|_2) \|r_j\|_{W^{1,2}}^3 = o( \|r_j\|_{W^{1,2}}^2) \,.
	$$
	To deal with $A_2$, we apply Lemma \ref{lem:productrule} with $X=\nabla r_j$ and $Y=\nabla r_j^<$. We deduce that
	$$
	\left| \nabla r_j \cdot \nabla \left( \nabla r_j^< \cdot \nabla r_j \right) - \frac12 \nabla r_j^< \cdot\nabla \left(|\nabla r_j|^2\right) \right| \lesssim |\nabla r_j|^2 \left( |\nabla^2 r_j^<| + |\nabla r_j^<| \right).
	$$
	We multiply this inequality by $(1+r_j)$ and integrate. Using $|\nabla^2 r_j^<| + |\nabla r_j^<| \lesssim \|r_j\|_{W^{1,2}}$ by \eqref{eq:locsimplifiedproof1} with $k=2$ and applying H\"older's inequality, we find
	\begin{align*}
		\left| A_2  - \frac{d-4}{4} \int_{\Sph^d} (1+r_j) \nabla r_j^< \cdot \nabla \left(|\nabla r_j|^2\right)\, \mathrm d\omega \right| \lesssim \| r_j \|_{W^{1,2}} \left( \| \nabla r_j\|_2^2 + \|r_j\|_3 \|\nabla r_j\|_3^2 \right).
	\end{align*}
	By H\"older's inequality $\| f \|_3 \leq \|f\|_2^{1/3} \|f\|_4^{2/3}$ for $f\in L^4(\Sph^d)$, so the right side is
	\begin{align*}
		& \lesssim \| r_j \|_{W^{1,2}} \left( \| \nabla r_j\|_2^2 + \|r_j\|_2^{1/3}\|r_j\|_4^{2/3} \|\nabla r_j\|_2^{2/3} \|\nabla r_j\|_4^{4/3} \right) \\
		& \leq \| r_j \|_{W^{1,2}} \left( \| r_j\|_{W^{1,2}}^2 + \|r_j\|_{W^{1,2}} \| r_j\|_{W^{1,4}}^2 \right).
	\end{align*}
	This is $o(\|r_j\|_{W^{1,2}}^2 + \|r_j\|_{W^{1,4}}^4)$, as desired.
	
 Finally, in the remaining integral term, we integrate by parts, which gives 
	$$
	\int_{\Sph^d} (1+r_j) \nabla r_j^< \cdot \nabla \left(|\nabla r_j|^2\right)\, \mathrm d\omega
	= - \int_{\Sph^d} (1+r_j) (\Delta r_j^<) |\nabla r_j|^2 \,\mathrm d\omega - \int_{\Sph^d} \nabla r_j \cdot \nabla r_j^< |\nabla r_j|^2\,\mathrm d\omega \,.
	$$
	Recalling $|\Delta r_j^<| + |\nabla r_j^<|\lesssim \|r_j\|_{W^{1,2}}$ by \eqref{eq:locsimplifiedproof1} with $k=2$, we see that the right side is bounded by
	$$
	\|r_j \|_{W^{1,2}} \left( \|\nabla r_j\|_2^2 + \|r_j\|_3 \|\nabla r_j\|_3^2 + \|\nabla r_j\|_3^3 \right).
	$$
	Dealing with the $L^3$-norms as before, we see that this is $o(\|r_j\|_{W^{1,2}}^2 + \|r_j\|_{W^{1,4}}^4)$. Thus, we have shown that $A=o(\|r_j\|_{W^{1,2}}^2 + \|r_j\|_{W^{1,4}}^4)$, which concludes the proof of \eqref{eq:locsimplified3}.	
\end{proof}


\section{Sharpness of the results}\label{sec:4} 

In this section we prove that our main result, Theorem \ref{thm}, is optimal in two different respects. First, the exponent 2 of the $W^{1,2}$-norm cannot be decreased. Second, the exponent 4 of the $W^{1,4}$-norm cannot be decreased. Both items are discussed in the next two subsections, respectively.


\subsection{Sharpness of the quadratic $W^{1,2}(\mathbb S^d)$-growth}

In this subsection, we exhibit a family $(u_\epsilon)\subset C^\infty(\Sph^d)$ such that $u_\epsilon>0$ and $\sigma_1(u_\epsilon)>0$ for all sufficiently small $\epsilon>0$, as well as $\| u_\epsilon\|_{\frac{4d}{d-4}} = \| 1 \|_{\frac{4d}{d-4}}$ and 
\begin{equation}
	\label{eq:sharppow2}
	\lim_{\epsilon\to 0} \inf_{\Psi}\|(u_\epsilon)_\Psi -1\|_{W^{1,2}}^2 = 0
	\qquad\text{and}\qquad
	\limsup_{\epsilon\to 0} \frac{F_2[u_\epsilon]- S_d^{(2)}}{\inf_{\Psi}\|(u_\epsilon)_\Psi -1\|_{W^{1,2}}^2} < \infty \,.
\end{equation}
Note that here we do not minimize with respect to $\lambda$, but instead we impose the normalization condition $\| u_\epsilon\|_{\frac{4d}{d-4}} = \| 1 \|_{\frac{4d}{d-4}}$. This corresponds to what was actually shown in the proof of Theorem~\ref{thm}.

To prove \eqref{eq:sharppow2}, we fix a function $0\not\equiv\phi\in C^\infty(\Sph^d)$ with $\int_{\Sph^d} \phi \,\mathrm d\omega = 0$ and $\int_{\Sph^d} \omega\, \phi \,\mathrm d\omega=0$ and consider the family 
$$
u_\epsilon \coloneqq \lambda_\epsilon \left( 1+\epsilon \phi \right)
\qquad\text{with}\qquad
\lambda_\epsilon:= \frac{\| 1 \|_{\frac{4d}{d-4}}}{\|1+\epsilon\phi\|_{\frac{4d}{d-4}}} \,.
$$
As $\phi$ and its derivatives are bounded, we can easily verify that $u_\epsilon>0$ and $\sigma_1(u_\epsilon)>0$ for all sufficiently small $\epsilon>0$. Arguing as in the proof of Lemma \ref{lem:locinitial}, we find
$$
F_2[u_\epsilon] - S_d^{(2)} = \| 1 + \epsilon \phi \|_\frac{4d}{d-4}^{-4} \left( E_2[1+\epsilon\phi] - S_d^{(2)} \| 1 + \epsilon \phi \|_\frac{4d}{d-4}^{4} \right) 
= \epsilon^2 |\Sph^d|^{-\frac{d-4}{d}} \mathcal I_2[\phi] + o(\epsilon^2) \,.
$$

The proof of \eqref{eq:sharppow2} will therefore be complete once we have shown
\begin{equation}\label{eq:distH1}
	\inf_{\Psi}\|(u_\epsilon)_\Psi -1\|_{W^{1,2}}^2 = \epsilon^2 \|\phi\|^2_{W^{1,2}}+o(\epsilon^2)\,.
\end{equation}
Since
$$
\|(u_\epsilon)_\Psi -1\|_{W^{1,2}} = \lambda_\epsilon \left\|(1+\epsilon\phi)_\Psi - \lambda_\epsilon^{-1} \right\|_{W^{1,2}}
$$
and $\lambda_\epsilon^{-1} = 1 + \mathcal O(\epsilon^2)$ in view of the assumption $\int_{\mathbb S^d} \phi\,\mathrm d\omega = 0$, the claim \eqref{eq:distH1} will follow if we can prove
\begin{equation}\label{eq:distH1alt}
	\inf_{\Psi}\|(1+\epsilon\phi)_\Psi -1\|_{W^{1,2}}^2 = \epsilon^2 \|\phi\|^2_{W^{1,2}}+o(\epsilon^2)\,.
\end{equation}
By choosing $\Psi$ as the identity, we see that the left side here is at most $\epsilon^2 \|\phi\|^2_{W^{1,2}}$. On the other hand, thanks to Lemma \ref{lem:att}, we know that for all sufficiently small $\epsilon>0$ there are $\Psi_\epsilon$ such that 
\begin{align}\notag
	\inf_{\Psi}\|(1+\epsilon\phi)_\Psi -1\|_{W^{1,2}}^2 & = \|(1+\epsilon\phi)_{\Psi_\epsilon} -1\|_{W^{1,2}}^2 \\
	&=\|(1)_{\Psi_\epsilon} - 1\|_{W^{1,2}}^2 + 2\epsilon \langle (1)_{\Psi_\epsilon}-1, (\phi)_{\Psi_\epsilon}\rangle_{W^{1,2}} + \epsilon^2 \|(\phi)_{\Psi_\epsilon}\|_{W^{1,2}}^2\,.\label{eq:distH1asympt}
\end{align}
We can bound the left side from above by $\epsilon^2 \|\phi\|_{W^{1,2}}^2$ and the mixed term on the right from below by
$$
2\epsilon \langle (1)_{\Psi_\epsilon}-1, (\phi)_{\Psi_\epsilon}\rangle_{W^{1,2}}
\geq - \frac12 \|(1)_{\Psi_\epsilon} - 1\|_{W^{1,2}}^2 - 2 \epsilon^2 \|(\phi)_{\Psi_\epsilon}\|_{W^{1,2}}^2 \,.
$$
Bounding $\|(\phi)_{\Psi_\epsilon}\|_{W^{1,2}}\lesssim \|(\phi)_{\Psi_\epsilon}\|_{W^{1,4}} \lesssim \|\phi \|_{W^{1,4}}$ by Lemma \ref{lem:confbdd}, we deduce from the upper bound in \eqref{eq:distH1alt} that $\|1-(1)_{\Psi_\epsilon}\|_{W^{1,2}}\lesssim \epsilon$. Meanwhile, arguing as in the proof of Lemma \ref{lem:almostortho}, we see that $\|1-(1)_{\Psi_\epsilon}\|_{W^{1,2}}^2 = \operatorname{const} |\xi_\epsilon|^2 + o(|\xi_\epsilon|^2)$, so we infer that $|\xi_\epsilon|\lesssim \epsilon$. Arguing again as in Lemma~\ref{lem:almostortho}, we find $\| (\phi)_{\Psi_\epsilon} - \phi \|_{W^{1,2}} \lesssim |\xi_\epsilon| \lesssim \epsilon$. Inserting this into  \eqref{eq:distH1asympt}, we obtain
$$
\inf_{\Psi}\|(1+\epsilon\phi)_\Psi -1\|_{W^{1,2}}^2
= \|(1)_{\Psi_\epsilon} - 1\|_{W^{1,2}}^2 + 2\epsilon \langle (1)_{\Psi_\epsilon}-1, \phi \rangle_{W^{1,2}} + \epsilon^2 \|\phi \|_{W^{1,2}}^2 + o(\epsilon^2) \,.
$$
Looking once again at the proof of Lemma \ref{lem:almostortho}, we see that
$$
\langle (1)_{\Psi_\epsilon}-1, \phi \rangle_{W^{1,2}} = \operatorname{const} \int_{\Sph^d} \xi_\epsilon\cdot \omega\, \phi \,\mathrm d\omega + o(|\xi_\epsilon|) = o(\epsilon) \,.
$$
Here we used the orthogonality condition $\int_{\Sph^d} \omega\, \phi \,\mathrm d\omega = 0$. Thus,
$$
\inf_{\Psi}\|(1+\epsilon\phi)_\Psi -1\|_{W^{1,2}}^2
= \|(1)_{\Psi_\epsilon} - 1\|_{W^{1,2}}^2 + \epsilon^2 \|\phi \|_{W^{1,2}}^2 + o(\epsilon^2) \,.
$$
Dropping the nonnegative first term, we arrive at the claimed identity \eqref{eq:distH1alt}. 


\subsection{Sharpness of the quartic $W^{1,4}(\mathbb S^d)$-growth}

In this subsection we exhibit a sequence $(u_j)\subset C^\infty(\Sph^d)$ such that $u_j>0$ and $\sigma_1(u_j)>0$ for all $j$, as well as $\| u_j\|_{\frac{4d}{d-4}} = \| 1 \|_{\frac{4d}{d-4}}$ and 
\begin{equation}
	\label{eq:sharppow4}
	\lim_{j\to\infty} \inf_{\Psi}\|(u_j)_\Psi -1\|_{W^{1,4}}^4 = 0
	\qquad\text{and}\qquad
	\limsup_{j\to \infty} \frac{F_2[u_j]- S_d^{(2)}}{\inf_{\Psi}\|(u_j)_\Psi -1\|_{W^{1,4}}^4} < \infty \,.
\end{equation}
As in \eqref{eq:sharppow2}, instead of minimizing with respect to $\lambda$, we impose the normalization condition $\| u_\epsilon\|_{\frac{4d}{d-4}} = \| 1 \|_{\frac{4d}{d-4}}$.

The elements in the sequence $(u_j)$ in \eqref{eq:sharppow4} consist of two bubbles with eventually disjoint supports on different scales. A similar approach for the sharpness of the $p$-Sobolev inequality on $\mathbb R^d$ can be found in \cite{Figalli2022}.

It is convenient to take $(u_j)$ as a subsequence of the two-parameter family of functions
$$
u_{\delta,\xi} := \lambda_{\delta,\xi} \left(1+ \delta (1)_{\Psi_\xi} \right)
\qquad\text{with}\qquad
\lambda_{\delta,\xi} := \frac{\|1\|_{\frac{4d}{d-4}}}{\|1 + \delta (1)_{\Psi_\xi}\|_{\frac{4d}{d-4}}} \,,
$$
corresponding to a suitably chosen sequence $(\delta_j,\xi_j)$ with $\delta_j\to 0$ and $|\xi_j|\to 1$. We explain this in Step 4 below, after having established all the necessary facts in the previous three steps.

\medskip

\textit{Step 1.} We show that $\sigma_1(u_{\delta,\xi})>0$ if $|\xi|$ is sufficiently close to $1$, uniformly in $\delta>0$. 

Since $u\mapsto \sigma_1(u)$ is homogeneous, it suffices to show the corresponding fact for $\sigma_1(1+\delta (1)_{\Psi_\xi})$. We have
$$
\sigma_1(1+\delta (1)_{\Psi}) = \sigma_1(1) + \delta^2 \sigma_1((1)_{\Psi}) - \delta \frac{d-4}{4} \Delta (1)_{\Psi} + \delta d \left( \frac{d-4}4 \right)^2 (1)_{\Psi} \,.
$$
Clearly, $\sigma_1(1) = (d/2) ((d-4)/4)^2$ and, by conformality of $\Psi$ and $\sigma_1^{\Psi^*g_*}=\sigma_1^{g_*} = d/2$,
$$
\sigma_1((1)_{\Psi}) = \left(\frac{d-4}{4} \right)^2 (1)_\Psi^{\frac{2d}{d-4}} \sigma_1^{(1)_\Psi^\frac{8}{d-4} g_*} = \left(\frac{d-4}{4} \right)^2 (1)_\Psi^{\frac{2d}{d-4}} \sigma_1^{\Psi^* g_*} = \frac d2\, \left(\frac{d-4}{4} \right)^2 (1)_\Psi^{\frac{2d}{d-4}} \,.
$$
Using the explicit form of $J_{\Psi_\xi}$ in \eqref{eq:psijac}, we compute
$$
\Delta (1)_{\Psi_\xi} = \frac{d(d-4)}4 \, (1)_{\Psi_\xi} \, \frac{|\xi|^2 - (\xi\cdot\omega)^2 - 2(1-2\xi\cdot\omega+|\xi|^2)\xi\cdot\omega}{|\omega-\xi|^4}
$$
and write
$$
-\frac{d-4}{4} \Delta (1)_{\Psi} + d \left( \frac{d-4}4 \right)^2 (1)_{\Psi}
= \frac d2\, \left(\frac{d-4}{4} \right)^2 \, (1)_{\Psi_\xi} \, \frac{Q_{|\xi|}(\xi\cdot\omega)}{|\omega-\xi|^4}
$$
with
$$
Q_{|\xi|}(t) := 2(-|\xi|^2 + t^2 + 2t(1-2t + |\xi|^2) + (1-2t + |\xi|^2)^2) \,.
$$
Thus, we arrive at
$$
\sigma_1(1+\delta (1)_{\Psi_\xi}) = \frac d2\, \left(\frac{d-4}{4} \right)^2 \left( 1 + \delta^2 \, (1)_{\Psi_\xi}^{\frac{2d}{d-4}} + \delta (1)_{\Psi_\xi} \frac{Q_{|\xi|}(\xi\cdot\omega)}{|\omega-\xi|^4} \right).
$$
It turns out that the right side is positive -- not only for $|\xi|$ close to $1$, but for any $|\xi|\in [0,1)$. Clearly, the terms of order $\delta^0$ and $\delta^2$ are positive. Next, we study the term of order $\delta$. Note that $Q_{|\xi|}$ is a polynomial of degree two with positive leading order coefficient. Moreover, $Q_{|\xi|}(|\xi|)>0$ and $Q_{|\xi|}'(|\xi|)\leq 0$. Thus, we have $Q_{|\xi|}(t)>0$ for all $t\leq |\xi|$ and, since  $|\xi\cdot\omega|\leq|\xi|$, also $Q_{|\xi|}(\xi\cdot\omega)> 0$ for all $\omega\in\Sph^d$. This proves the claimed positivity.

\medskip

\textit{Step 2.} We show that as $|\xi|\to 1$ we have, uniformly in $0<\delta\leq 1$, 
\begin{equation}
	\label{eq:sharppow4energy}
	E_2[1+\delta(1)_{\Psi_\xi}] = (1 + \delta^4) \, E_2[1] + o_{|\xi|\to 1}(1)
\end{equation}
and 
\begin{equation}
	\label{eq:sharppow4norm}
	\| 1+\delta (1)_{\Psi_{\xi}}\|_{\frac{4d}{d-4}}^\frac{4d}{d-4} = |\mathbb S^d|\left(1+\delta^{\frac{4d}{d-4}} \right)+ o_{|\xi|\to\infty}(1)\,.
\end{equation}

For the proof of the latter expansion, we argue similarly as in the proof of Lemma \ref{lem:mobcomp}. Taking any sequence $(\xi_j)\subset B_1(0)$ with $\xi_j\to\omega$ and $|\omega|=1$ and any bounded sequence  $(\delta_j)\subset (0,1]$, we see that
$$
\delta_j (1)_{\Psi_{\xi_j}} \to 0
\qquad\text{pointwise in}\ \Sph^d \setminus\{\omega\} 
$$ and conclude \eqref{eq:sharppow4norm} using the Brezis--Lieb lemma and the fact that $\| (1)_{\Psi_{\xi_j}}\|_{\frac{4d}{d-4}}^\frac{4d}{d-4}= \| 1\|_{\frac{4d}{d-4}}^\frac{4d}{d-4} = |\Sph^d|$.

To prove \eqref{eq:sharppow4energy}, we expand $e_2(1+\delta_j(1)_{\Psi_j})$ in powers of $\delta_j$, apply Lemma \ref{lem:subcritical} to the terms of order one, two, and three, and use the conformal invariance for the term of order four. In this way we arrive at
\begin{equation}
	E_2[1+\delta_j (1)_{\Psi_{\xi_j}}] = (1+\delta_j^4)\, E_2[1] - \delta_j^3\left(\frac{4}{d-4}\right)^2 \int_{\mathbb S^d}   (\Delta(1)_{\Psi_{\xi_j}}) |\nabla (1)_{\Psi_{\xi_j}}|^2\, \mathrm d\omega + o_{j\to\infty}(1)\,. \label{eq:expE}
\end{equation}
To discard the remaining third order term in \eqref{eq:expE}, we recall $(1)_{\Psi_\xi} = J_{\Psi_\xi}^{(d-4)/(4d)}$ with $J_{\Psi_\xi}$ given in \eqref{eq:psijac}, which yield
$$
|\nabla (1)_{\Psi_\xi}(\omega) | \lesssim \frac{(1-|\xi|^2)^\frac{d-4}4}{|\omega-\xi|^\frac{d-2}2}
\qquad\text{and}\qquad
|\Delta (1)_{\Psi_\xi}(\omega) | \lesssim \frac{(1-|\xi|^2)^\frac{d-4}4}{|\omega-\xi|^\frac{d}2} \,.
$$
Thus,
$$
\left| \int_{\mathbb S^d}   (\Delta(1)_{\Psi_{\xi_j}}) |\nabla (1)_{\Psi_{\xi_j}}|^2\, \mathrm d\omega \right| \lesssim (1-|\xi|^2)^{\frac{3(d-4)}4} \int_{\Sph^d} \frac{\mathrm d\omega}{|\omega-\xi|^{\frac{3d-4}2}} \,.
$$
From \eqref{eq:elementaryintegral} we deduce that the integral in \eqref{eq:expE} is bounded by $$(1-|\xi|^2)^{\frac{3(d-4)}4 - \frac{3d-4}{2} + d} = (1-|\xi|^2)^{\frac d4-1} \to 0\,,$$ which concludes the proof of \eqref{eq:sharppow4energy}.

\medskip

\textit{Step 3.} We show that there is a constant $c>0$ such that for all $\delta\in(0,c]$ and all $\xi$ with $1-|\xi|\in(0,c]$ we can bound
\begin{equation}
	\label{eq:distw14}
	\delta^4 \gtrsim \inf_\Psi \left\| (u_{\delta,\xi})_\Psi - 1 \right\|_{W^{1,4}}^4 \gtrsim \delta^4 \,,
\end{equation} provided that $|\lambda_{\delta,\xi} - 1|\leq c \delta$. Indeed, we shall show that if $\delta\in(0,c]$ and $1-|\xi|\in(0,c]$, then
\begin{equation}
	\label{eq:distw14alt}
	\delta^4 \gtrsim \inf_\Psi \left\| (1+\delta(1)_{\Psi_\xi})_\Psi - 1 \right\|_{W^{1,4}}^4 \gtrsim \delta^4 \,. 
\end{equation}
After possibly decreasing the constant $c$ (depending on the implicit constant in \eqref{eq:distw14alt}), we can argue as in the previous subsection to see that \eqref{eq:distw14} holds under the additional assumption $|\lambda_{\delta,\xi} - 1|\leq c \delta$.

Let us turn to the proof of \eqref{eq:distw14alt}. The upper bound follows immediately by taking $\Psi$ as the identity and using Lemma \ref{lem:confbdd}. To prove the lower bound, we argue by contradiction, considering sequences $\delta_j\to 0$ and $|\xi_j|\to 1$. In view of the upper bound, which we have already proved, we see as in Step 1 of the proof of Lemma \ref{lem:att} that the infimum in \eqref{eq:distw14alt} for $(\delta,\xi) = (\delta_j,\xi_j)$ is attained at some $\Psi=\Psi_{\xi_j'}$. (In fact, for the proof that follows, it is not necessary to choose $\xi_j'$ as the exact minimizer. It would be sufficient to choose $\xi_j'$ so that it attains the infimum in \eqref{eq:distw14alt} up to an additive error $o(\delta_j^4)$.) It remains to estimate
$$
\left\| (1+\delta(1)_{\Psi_{\xi_j}})_{\Psi_{\xi_j'}} - 1 \right\|_{W^{1,4}}^4 =
\left\| (1)_{\Psi_{\xi_j'}} - 1 + \delta_j (1)_{\Psi_{\xi_j''}} \right\|_{W^{1,4}}^4 \,,
$$
where $\xi_j''\in B_1(0)$ is defined by $\Psi_{\xi_j''} = \Psi_{\xi_j} \circ \Psi_{\xi_j'}$. In fact, we will show that
\begin{equation}
	\label{eq:distw14altlower}
	\left\| (1)_{\Psi_{\xi_j'}} - 1 + \delta_j (1)_{\Psi_{\xi_j''}} \right\|_{W^{1,4}}^4 \geq \delta_j^4 \left\| (1)_{\Psi_{\xi_j''}} \right\|_{W^{1,4}}^4 + o(\delta_j^4) \,.
\end{equation}
Note that, by Lemma \ref{lem:confbdd}, $\| (1)_{\Psi_{\xi_j''}} \|_{W^{1,4}}^4\gtrsim 1$, so \eqref{eq:distw14altlower} contradicts the choice of $(\delta_j,\xi_j)$. Thus, we have reduced the proof of the lower bound in \eqref{eq:distw14alt} to the proof of \eqref{eq:distw14altlower}.

In order to prove \eqref{eq:distw14altlower}, we first show that
\begin{equation}
	\label{eq:distw14altlowerproof}
	|\xi_j'|\lesssim \delta_j \,.
\end{equation}
To this end, we argue similarly to the case of the $W^{1,2}$-norm. Choosing the identity as a competitor and using the triangle inequality, we find
$$
\delta_j\left\|(1)_{\Psi_{\xi_j}}\right\|_{W^{1,4}}\geq \left\|(1)_{\Psi_{\xi_j'}}-1+\delta_j(1)_{\Psi_{\xi_j''}}\right\|_{W^{1,4}}\geq\left\|(1)_{\Psi_{\xi_j'}}-1\right\|_{W^{1,4}} - \delta_j\left\|(1)_{\Psi_{\xi_j''}}\right\|_{W^{1,4}}\,.
$$
From this, we can infer that $\|(1)_{\Psi_{\xi_j'}}-1\|_{W^{1,4}}\lesssim \delta_j$ by Lemma \ref{lem:confbdd}. Meanwhile,  by the same reasoning as in the proof of Lemma \ref{lem:almostortho}, we see that $\| 1- (1)_{\Psi_j} \|_{W^{1,4}}^4 = \operatorname{const} |\xi_j'|^4 + o(|\xi_j'|^4)$. This implies \eqref{eq:distw14altlowerproof}.

Expanding the fourth power in the integrand on the left side of \eqref{eq:distw14altlower} in powers of $\delta_j$ gives $\|(1)_{\Psi_{\xi_j'}}-1\|_{W^{1,4}}^4+\|\delta_j(1)_{\Psi_{\xi_j''}}\|_{W^{1,4}}^4$ plus some mixed terms that are bounded by
\begin{equation}\label{eq:mixed}
	\int_{\mathbb S^d} |\nabla((1)_{\Psi_{\xi_j'}}-1)|^\alpha |\delta_j\nabla(1)_{\Psi_{\xi_j''}}|^{4-\alpha} \,\mathrm d\omega
	\qquad \text{and} \qquad 
	\int_{\mathbb S^d} |(1)_{\Psi_{\xi_j'}}-1|^\alpha |\delta_j(1)_{\Psi_{\xi_j''}}|^{4-\alpha} \,\mathrm d\omega
\end{equation} 
for $\alpha\in \{1,2,3\}$. We claim that these mixed terms are $o(\delta_j^4)$. By dropping the nonnegative term $\|(1)_{\Psi_{\xi_j'}}-1\|_{W^{1,4}}^4$, we will thus arrive at the claimed lower bound \eqref{eq:distw14altlower}.

To prove that the terms in \eqref{eq:mixed} are $o(\delta_j^4)$, we note that, in view of \eqref{eq:psijac},
$$
\left| (1)_{\Psi_{\xi_j'}}(\omega)-1 \right| + \left| \nabla ( (1)_{\Psi_{\xi_j'}}(\omega)-1) \right| \lesssim |\xi_j'| \,,
\qquad\omega\in\Sph^d \,.
$$
This, together with \eqref{eq:distw14altlowerproof}, implies that the terms in \eqref{eq:mixed} are bounded by a constant times
\begin{equation*}
	\delta_j^4 \int_{\mathbb S^d} |\nabla(1)_{\Psi_{\xi_j''}}|^{4-\alpha} \,\mathrm d\omega
	\qquad \text{and} \qquad 
	\delta_j^4 \int_{\mathbb S^d} | (1)_{\Psi_{\xi_j''}}|^{4-\alpha} \,\mathrm d\omega \,.
\end{equation*} 
The assertion that these terms are $o(\delta_j^4)$ follows from the fact that $|\xi_j''|\to 1$ (as a consequence of \eqref{eq:distw14altlowerproof}) together with Lemma \ref{lem:subcritical}.

\medskip

\textit{Step 4.} We now complete the proof of \eqref{eq:sharppow4}. We take an arbitrary sequence $(\delta_j)\in(0,1]$ with $\delta_j\to 0$. Then, according to \eqref{eq:sharppow4norm}, for each fixed $j$, we have
$$
\lim_{|\xi|\to 1} \lambda_{\delta_j,\xi} = (1 + \delta_j^\frac{4d}{d-4})^{-\frac{d-4}{4d}} \,.
$$
Thus, after discarding finitely many $j$ if necessary, we can choose a $\xi_j\in B_1(0)$ such that $|\lambda_{\delta_j,\xi_j} - 1| \leq c \delta_j$ with the constant $c$ from Step 3. We can also ensure that the terms $o_{|\xi|\to 1}(1)$ in \eqref{eq:sharppow4energy} and \eqref{eq:sharppow4norm} with $\delta=\delta_j$ are $\leq \delta_j^{4d/(d-4)}$, and hence we obtain
\begin{equation}\label{eq:sharppow4energy2} 
	E_2[1+\delta_j(1)_{\Psi_{\xi_j}}] \geq (1 + \delta_j^4) \, E_2[1] -\delta_j^{\frac{4d}{d-4}}
\end{equation}
and
\begin{equation} \label{eq:sharppow4norm2}
	\| 1+\delta_j (1)_{\Psi_{\xi_j}}\|_{\frac{4d}{d-4}}^4 \leq |\mathbb S^d|^{\frac{d-4}{
	d}}\left(1+\frac{d-4}{d}\left(1+\frac{1}{|\mathbb S^d|}\right)\delta_j^{\frac{4d}{d-4}} \right).
\end{equation}
Clearly, we may also assume that $|\xi_j|\to 1$.

From the bound $|\lambda_{\delta_j,\xi_j} - 1| \leq c \delta_j$ it follows by Step 3 that $\inf_\Psi \|(u_{\delta_j,\xi_j})_\Psi - 1 \|_{W^{1,4}}^4$ is comparable to $\delta_j^4$. In particular, this infimum tends to zero, as required. Next, it follows from the choice of $\xi_j$ and the bounds \eqref{eq:sharppow4energy2} and \eqref{eq:sharppow4norm2} that
$$
F_2[u_{\delta_j,\xi_j}] - S_d^{(2)} = S_d^{(2)} \delta_j^4 +\mathcal O\left(\delta_j^\frac{4d}{d-4}\right).
$$
This, together with the quartic vanishing of $\inf_\Psi \|(u_{\delta_j,\xi_j})_\Psi - 1 \|_{W^{1,4}}^4$, completes the proof of \eqref{eq:sharppow4}.


\appendix


\section*{Appendix. A strengthening of the result of Figalli and Zhang}\label{app:figallizhang}

After attending a conference presentation of the main results of this paper, Robin Neumayer suggested that a similar two-term stability inequality might hold for the Sobolev inequality in $\dot W^{1,p}(\R^d)$, $p>2$, as well. We are grateful to her for raising this question, and we provide the details of the argument in this appendix.

The sharp Sobolev inequality on $\R^d$ for $1\leq p<d$ reads
$$
\int_{\R^d} |\nabla u|^p\,\mathrm dx \geq S_{d,p} \left( \int_{\R^d} |u|^\frac{dp}{d-p} \,\mathrm dx \right)^\frac{d-p}d
\qquad\text{for all}\ u\in \dot W^{1,p}(\R^d) \,.
$$
Here $\dot W^{1,p}(\R^d)$ denotes the homogeneous Sobolev space, that is, the space of all $u\in L^1_\loc(\R^d)$ satisfying $\nabla u\in L^p(\R^d)$ (where the gradient is taken in the distributional sense) as well as $|\{ |u|>\tau \}|<\infty$ for all $\tau>0$. By definition $S_{d,p}$ denotes the optimal (that is, largest) constant for which this inequality is valid. Let $\mathcal M=\mathcal M_{d,p}$ denote the set of optimizers in the inequality.

\begin{theorem}
	Let $2<p<d$. Then there is a constant $c_{d,p}>0$ such that for all $u\in\dot W^{1,p}(\R^d)$,
	\begin{equation}\label{eq:strengFZ}\tag{A.1}
	 \|\nabla u\|_p^p-S_{p,d}\|u\|_{\frac{dp}{d-p}}^p \geq c_{d,p} \inf_{Q\in\mathcal M} \left( \| \nabla u - \nabla Q \|_p^p + \int_{\R^d} |\nabla Q|^{p-2} |\nabla u-\nabla Q|^2\,\mathrm dx \right).
	\end{equation}
\end{theorem}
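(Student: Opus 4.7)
My plan is to follow the Bianchi--Egnell template used in the proof of Theorem~\ref{thm}, now applied to the $p$-Sobolev setting. The global-to-local step comes essentially for free from the existing Figalli--Zhang theorem: after suitable normalization, any sequence $(u_j)$ with vanishing deficit admits $Q_j \in \mathcal M$ with $\|\nabla(u_j - Q_j)\|_p \to 0$, and H\"older's inequality
\begin{equation*}
\int_{\R^d} |\nabla Q_j|^{p-2}|\nabla(u_j - Q_j)|^2\,\mathrm dx \leq \|\nabla Q_j\|_p^{p-2}\|\nabla(u_j - Q_j)\|_p^2,
\end{equation*}
combined with the boundedness of $\|\nabla Q_j\|_p$, shows that the weighted $L^2$-distance to $\nabla Q_j$ also vanishes. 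Hence the problem reduces to a local statement for $u$ close to $\mathcal M$ in $\dot W^{1,p}$.

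For the local analysis, I would choose $Q$ attaining (or almost attaining) $\inf_{Q'} \|\nabla u - \nabla Q'\|_p$ and set $v := u - Q$. Arguing as in Lemma~\ref{lem:almostortho}, the first-order optimality gives almost-orthogonality of $v$ to the finite-dimensional tangent space $T_Q\mathcal M$. Since Figalli--Zhang's local estimate already delivers $\|\nabla u\|_p^p - S_{p,d}\|u\|_{dp/(d-p)}^p \gtrsim \|\nabla v\|_p^p$, it suffices to establish
\begin{equation*}
\|\nabla u\|_p^p - S_{p,d}\|u\|_{dp/(d-p)}^p \gtrsim \int_{\R^d} |\nabla Q|^{p-2}|\nabla v|^2\,\mathrm dx,
\end{equation*}
and then average the two inequalities to obtain \eqref{eq:strengFZ}. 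For this weighted $L^2$-bound, I would combine the refined pointwise inequality
\begin{equation*}
|a+b|^p - |a|^p - p|a|^{p-2}\, a\cdot b \geq c_p \min\!\bigl(|a|^{p-2}|b|^2,\ |b|^p\bigr), \qquad p\geq 2,
\end{equation*}
applied with $a = \nabla Q$ and $b = \nabla v$, with a second-order Taylor expansion of $S_{p,d}\|u\|_{dp/(d-p)}^p$ at $Q$. The linear correction is eliminated by the Euler--Lagrange equation for $Q$, and the resulting quadratic form is precisely the Hessian of the Sobolev quotient at $Q$. Its kernel is $T_Q\mathcal M \oplus \R Q$, so the known spectral gap at the Aubin--Talenti bubble, together with the almost-orthogonality of $v$, yields the desired lower bound up to a higher-order remainder.

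The principal obstacle is the cubic-type remainder arising from the Taylor expansion of $\|u\|_{dp/(d-p)}^p$: for $p>2$ and potentially concentrated $v$, this term is not directly dominated by $\int |\nabla Q|^{p-2}|\nabla v|^2\,\mathrm dx$. My proposed remedy is a region decomposition $\R^d = \{|\nabla v|\leq |\nabla Q|\} \cup \{|\nabla v|> |\nabla Q|\}$: on the first set, the pointwise inequality above reduces to the genuine quadratic bound $c_p|\nabla Q|^{p-2}|\nabla v|^2$ and the Taylor remainder can be absorbed; on the second set, the pointwise inequality gives the $L^p$-density $c_p|\nabla v|^p$ and the remainder is absorbed using Figalli--Zhang's quantitative $L^p$-estimate applied on that region. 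This dichotomy mirrors the phenomenon exhibited in Section~\ref{sec:4} of the present paper: dispersion-type perturbations saturate the weighted $L^2$-bound, while bubbling-type perturbations saturate the $L^p$-bound, and a unified two-regime analysis is needed to capture both simultaneously.
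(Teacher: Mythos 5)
Your plan re-derives the local estimate from scratch (Taylor expansion, pointwise inequality, spectral gap, region decomposition), whereas the paper's proof requires \emph{no new analysis at all}: it is a four-line chaining of existing results. Specifically, the paper chooses $Q_0$ as the minimizer of the \emph{weighted $L^2$-distance} $\inf_Q\int|\nabla Q|^{p-2}|\nabla u-\nabla Q|^2\,\mathrm dx$ (attainable by \cite[Prop.~4.1]{Figalli2015} when the deficit is small relative to $\|\nabla u\|_p^p$), then invokes \cite[Prop.~2.4]{Figalli2015} to bound that weighted distance by $\delta(u)+\|\nabla u-\nabla Q_0\|_p^p$, invokes \cite[proof of Prop.~4.1(2)]{Figalli2015} to say that this $Q_0$ is also a near-minimizer of the $L^p$-distance, and closes with the main theorem of \cite{Figalli2022} to control the $L^p$-distance by $\delta(u)$. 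The case of large deficit is handled trivially by taking $Q=0$. Everything you propose to prove (the cancellation of the linear term, the coercivity of the quadratic form on the orthogonal complement of $T_Q\mathcal M$, the absorption of the cubic remainder via a dichotomy between dispersion and concentration) is precisely the content of those cited lemmas, so your approach amounts to re-proving them rather than using them.

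Beyond being longer, your proposal has a genuine technical gap: you fix $Q$ as the minimizer of the \emph{$L^p$-distance} $\inf_{Q'}\|\nabla u-\nabla Q'\|_p$, and then claim that first-order optimality gives the orthogonality needed for the spectral gap. But the Euler--Lagrange condition for this minimization is the nonlinear identity $\int|\nabla v|^{p-2}\nabla v\cdot\nabla(\partial Q)\,\mathrm dx=0$, $\partial Q\in T_Q\mathcal M$, whereas the spectral gap for the linearized Sobolev quotient at $Q$ is coercive only on the orthogonal complement of $T_Q\mathcal M$ in the \emph{weighted} inner product $\int|\nabla Q|^{p-2}\nabla\cdot\,\nabla\cdot\,\mathrm dx$. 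For $p>2$ these orthogonality conditions do not coincide, and translating between them (to make the "almost" in almost-orthogonality small enough to be absorbed) is nontrivial. The paper's choice of $Q_0$ as the weighted-$L^2$ minimizer is not an incidental detail: it produces exactly the right orthogonality for free, and the price one pays---that $Q_0$ might not minimize the $L^p$-distance---is paid back by the comparability estimate in \cite[Prop.~4.1(2)]{Figalli2015}. If you want to carry out your plan, you should either switch to the weighted-$L^2$ minimizer from the start, or supply the argument showing that the $L^p$-optimality condition implies weighted-$L^2$ almost-orthogonality up to an error of acceptable order.
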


\begin{proof}
	Recall that by \cite[Proposition 4.1]{Figalli2015} there is a $\delta_0=\delta_0(d,p)>0$ with the following property: for all $u\in \dot W^{1,p}(\R^d)$ with \begin{equation}\label{eq:cond}\tag{A.2}
			\delta(u)\coloneqq 	\|\nabla u\|_p^p-S_{p,d}\|u\|_{\frac{dp}{d-p}}^p\leq \delta_0\|\nabla u\|_{p}^p
		\end{equation} there is a $Q_0\in \mathcal M$ that attains the infimum in $\inf_{Q\in\mathcal M}
		\int_{\R^d}|\nabla Q|^{p-2}|\nabla u-\nabla Q|^2\,\mathrm dx$.
		(To be precise, the attainability is stated for a different but equivalent distance; see \cite[Remark 2.1]{Figalli2015}. However, the proof of attainability is the same.) 
		
		The infimum on the right side of \eqref{eq:strengFZ} is not greater than $$\|\nabla u-\nabla Q_0\|_p^p+\int_{\R^d}|\nabla Q_0|^{p-2}|\nabla u-\nabla Q_0|^2\,\mathrm dx \,.$$ 
		Combining this with \cite[Proposition 2.4]{Figalli2015}, we obtain for $u\in W^{1,p}(\R^d)$ satisfying \eqref{eq:cond},
		\begin{equation*}
			\inf_{Q\in\mathcal M}\left(\|\nabla u-\nabla Q\|_p^p+\int_{\R^d}|\nabla Q|^{p-2}|\nabla u-\nabla Q|^2\,\mathrm dx\right)\lesssim \delta(u) +\|\nabla u-\nabla Q_0\|_p^p \,.
		\end{equation*} 
		
		It remains to prove that the second term on the right side can be bounded by the first term on the right side. To this end, we use the fact that
		$$
		\|\nabla u-\nabla Q_0\|_p^p
		\lesssim  \inf_{Q\in\mathcal M}\|\nabla u-\nabla Q\|_p^p
		$$
		with an implied constant depending only on $p$ and $d$. This is shown in the proof of \cite[Proposition 4.1(2)]{Figalli2015} (see the first displayed equation in their proof of part (2)). Combining this inequality with the main result from \cite{Figalli2022}, we deduce that indeed $\|\nabla u-\nabla Q_0\|_p^p\lesssim \delta(u)$.	This establishes the inequality in the theorem under the assumption $\delta(u)\leq \delta_0 \|\nabla u\|_{p}^p$.
		
		In case $\delta(u)> \delta_0 \|\nabla u\|_{p}^p$, the claim follows from the straight-forward estimate 
		$$	 
		\inf_{Q\in\mathcal M}\left(\|\nabla u-\nabla Q\|_p^p+\int_{\R^d}|\nabla Q|^{p-2}|\nabla u-\nabla Q|^2\,\mathrm dx\right) \leq \|\nabla u\|_{p}^p \,,
		$$
		obtained by taking $Q=0$. This completes the proof of the theorem.
\end{proof}

\newcommand{\etalchar}[1]{$^{#1}$}
\providecommand{\bysame}{\leavevmode\hbox to3em{\hrulefill}\thinspace}
\providecommand{\MR}{\relax\ifhmode\unskip\space\fi MR }
\providecommand{\MRhref}[2]{%
	\href{http://www.ams.org/mathscinet-getitem?mr=#1}{#2}
}
\providecommand{\href}[2]{#2}



\begin{thebibliography}{CFMP09}
	
	\bibitem[Aub76]{Aubin1976}
	T.~Aubin, \emph{Equations diff\'erentielles non lin\'eaires et probl\`eme
		de {Y}amabe concernant la courbure scalaire}, J. Math. Pures Appl.
	\textbf{55} (1976), 269--296.
	
	\bibitem[BE91]{Bianchi1991}
	G.~Bianchi and H.~Egnell, \emph{A note on the sobolev inequality}, J.
	Funct. Anal. \textbf{100} (1991), no.~1, 18--24.
	
	\bibitem[BDNS23]{Bonforte2023}
	M.~Bonforte, J.~Dolbeault, B.~Nazaret, and N.~Simonov,
	\emph{Constructive stability results in interpolation inequalities and
		explicit improvements of decay rates of fast diffusion equations}, Discrete
	Contin. Dyn. Syst. \textbf{43} (2023), no.~3 \& 4, 1070--1089.
	
	\bibitem[BDNS25]{Bonforte2025}
	M.~Bonforte, J.~Dolbeault, B.~Nazaret, and N.~Simonov, \emph{Stability in gagliardo-nirenberg-sobolev inequalities: Flows,
		regularity and the entropy method}, Memoirs of the AMS, to appear (2025), arXiv:2007.03674.
		
			\bibitem[Bre05]{Brendle2005}
		S.~Brendle, \emph{Convergence of the {Y}amabe flow for arbitrary initial
			energy}, J. Differential Geom. \textbf{69} (2005), no.~2, 217--278.
		
		\bibitem[Bre07]{Brendle2007}
		S.~Brendle, \emph{Convergence of the {Y}amabe flow in dimension 6 and higher},
		Invent. Math. \textbf{170} (2007), no.~3, 541--576.
	
	\bibitem[BL85]{BREZIS198573}
	H.~Brezis and E.~H.~Lieb, \emph{Sobolev inequalities with remainder
		terms}, J. Funct. Anal. \textbf{62} (1985), no.~1, 73--86.
		
			\bibitem[BDS24]{BrDoSi}
		G.~Brigati, J.~Dolbeault, and N.~Simonov, \emph{Logarithmic
			{S}obolev and interpolation inequalities on the sphere: constructive
			stability results}, Ann. Inst. H. Poincar\'e{} C Anal. Non Lin\'eaire
		\textbf{41} (2024), no.~5, 1289--1321.
	
	\bibitem[CCR15]{Carlotto2015}
		A.~Carlotto, O.~Chodosh, and Y.~A.~Rubinstein, \emph{Slowly
		converging yamabe flows}, Geom. Topol. \textbf{19} (2015), no.~3, 1523--1568.
		
	\bibitem[Cas20]{Case2020}
		J.~S.~Case, \emph{The {Frank-Lieb} approach to sharp {S}obolev
			inequalities}, Commun. Contemp. Math. \textbf{23} (2020), no.~03, 2050015.
			
				\bibitem[CW20]{Case2020a}
			J.~S.~Case and Y.~Wang, \emph{Towards a fully nonlinear sharp sobolev
				trace inequality}, J. Math. Study \textbf{53} (2020), no.~4, 402--435.
	
	\bibitem[CFMP09]{Cianchi2009}
	A.~Cianchi, N.~Fusco, F.~Maggi, and A.~Pratelli, \emph{The
		sharp {S}obolev inequality in quantitative form}, J. Eur. Math. Soc.
	\textbf{11} (2009), no.~5, 1105--1139.
	
	\bibitem[CGY02a]{Chang2002}
	S.-Y.~A.~Chang, M.~J.~Gursky, and P.~C.~Yang, \emph{An a priori
		estimate for a fully nonlinear equation on four-manifolds}, J. Anal. Math.
	\textbf{87} (2002), no.~1, 151--186.
	
	\bibitem[CGY02b]{Chang2002a}
	S.-Y.~A.~Chang, M.~J.~Gursky, and P.~C.~Yang, \emph{{An Equation of Monge-Ampere Type in Conformal Geometry, and
			Four-Manifolds of Positive Ricci Curvature}}, Ann. Math. \textbf{155} (2002),
	no.~3, 709--787.
	
		\bibitem[CFW13]{Chen2013}
	S.~Chen, R.~L.~Frank, and T.~Weth, \emph{Remainder {T}erms in the
		{F}ractional {S}obolev {I}nequality}, Indiana Univ. Math. J. \textbf{62}
	(2013), no.~4, 1381--1397.
	
	\bibitem[DEF{\etalchar{+}}23]{Dolbeault2023}
	J.~Dolbeault, M.~J.~Esteban, A.~Figalli, R.~L.~Frank, and M.~Loss, \emph{Sharp stability for {S}obolev and log-{S}obolev inequalities,
		with optimal dimensional dependence}, Preprint (2023), arXiv:2209.08651.
	
	\bibitem[ENS22]{Engelstein2020}
	M.~Engelstein, R.~Neumayer, and L.~Spolaor, \emph{Quantitative stability
		for minimizing {Y}amabe metrics}, Trans. Amer. Math. Soc. Ser. B \textbf{9}
	(2022), 395--414.
	
	\bibitem[FN19]{Figalli2015}
	A.~Figalli and R.~Neumayer, \emph{Gradient stability for the {S}obolev
		inequality: the case $p\geq 2$}, J. Eur. Math. Soc. \textbf{29} (2019),
	no.~2, 319--354.
	
		\bibitem[FZ22]{Figalli2022}
	A.~Figalli and Y.~R.-Y. Zhang, \emph{Sharp gradient stability for the
		{S}obolev inequality}, Duke Math. J. \textbf{171} (2022), no.~12, 2407--2459.
	
	\bibitem[Fra22]{Frank2023a}
	R.~L.~Frank, \emph{Degenerate stability of some {S}obolev inequalities},
	Ann. Inst. H. Poincaré Anal. Non Linéaire \textbf{39} (2022), no.~6,
	1459--1484.
	
	\bibitem[Fra24]{Frank2023}
	R.~L.~Frank, \emph{The sharp {S}obolev inequality and its stability: An
		introduction}, Geometric and Analytic Aspects of Functional Variational
	Principles. Cetraro, Italy 2022 (A.~Cianchi, V.~Maz'ya, and T.~Weth,
	eds.), Springer Cham, 2024, 1--64.
	
		\bibitem[FL12a]{Frank2012a}
	R.~L.~Frank and E.~H.~Lieb, \emph{A new, rearrangement-free proof of
		the sharp hardy-littlewood-sobolev inequality}, Spectral Theory, Function
	Spaces and Inequalities (B.~Malcolm Brown, Jan Lang, and Ian~G. Wood,
	eds.), Springer Basel, 2012, 55--67.
	
	\bibitem[FL12b]{Frank2012b}
	R.~L.~Frank and E.~H.~Lieb, \emph{Sharp constants in several inequalities on the {H}eisenberg
		group}, Ann. of Math. \textbf{176} (2012), no.~1, 349--381.
	
		\bibitem[FP24]{Frank2024}
	R.~L.~Frank and J.~W.~Peteranderl, \emph{Degenerate stability of the
		{C}affarelli-{K}ohn-{N}irenberg inequality along the {F}elli-{S}chneider
		curve}, Calc. Var. Partial Differential Equations \textbf{63} (2024), no.~44.
		
			\bibitem[GW05]{Ge2005}
		Y.~Ge and G.~Wang, \emph{On a fully nonlinear {Y}amabe problem}, Ann.
		Sci. \'Ec. Norm. Sup\'er. \textbf{39} (2005), 569--598.
		
		\bibitem[GW13]{Ge2013}
		Y.~Ge and G.~Wang, \emph{On a conformal quotient equation. {II}},
		Commun. Anal. Geom. \textbf{21} (2013), no.~1, 1--38.
		
		
		\bibitem[GVW03]{Guan2003b}
		P.~Guan, J.~Viaclovsky, and G.~Wang, \emph{{Some Properties of the
				Schouten Tensor and Applications to Conformal Geometry}}, Trans. Amer. Math.
		Soc. \textbf{355} (2003), no.~3, 925--933.
		
		\bibitem[GW04]{Guan2004}
		P.~Guan and G.~Wang, \emph{Geometric inequalities on locally
			conformally flat manifolds}, Duke Math. J. \textbf{124} (2004), no.~1,
		177--212.
	
	\bibitem[GLZ23]{guerra2023sharp}
	A.~Guerra, X.~Lamy, and K.~Zemas, \emph{Sharp quantitative
		stability of the {M}\"obius group among sphere-valued maps in arbitrary
		dimension}, Trans. Amer. Math. Soc., to appear (2023), arXiv:2305.19886.
	
	\bibitem[GV07]{Gursky2007}
	M.~J.~Gursky and J.~A.~Viaclovsky, \emph{{Prescribing Symmetric
			Functions of the Eigenvalues of the Ricci Tensor}}, Ann. Math. \textbf{166}
	(2007), no.~2, 475--531.
	
	\bibitem[K{\"o}n24]{Koenig2024}
	T.~K{\"o}nig, \emph{An exceptional property of the one-dimensional
		bianchi--egnell inequality}, Calc. Var. Partial Differential Equations
	\textbf{63} (2024), no.~123.
	
			\bibitem[LP87]{Lee1987}
	J.~M.~Lee and T.~H.~Parker, \emph{The {Y}amabe problem}, Bull. Amer.
	Math. Soc. (N.S.) \textbf{17} (1987), no.~1, 37--91.
	
	\bibitem[LL03]{Li2003}
	A.~Li and Y.~Y.~Li, \emph{On some conformally invariant fully nonlinear
		equations}, Commun. Pure Appl. Math. \textbf{56} (2003), no.~10, 1416--1464.
		
		\bibitem[Lio85a]{Lions1985}
		P.-L.~Lions, \emph{The {C}oncentration-{C}ompactness {P}rinciple in the
			{C}alculus of {V}ariations. {T}he limit case, {P}art 1.}, Rev. Mat. Iberoam.
		\textbf{1} (1985), no.~1, 145--201.
		
		\bibitem[Lio85b]{Lions1985a}
		P.-L.~Lions, \emph{The {C}oncentration-{C}ompactness {P}rinciple in the {C}alculus
			of {V}ariations. {T}he limit case, {P}art 2.}, Rev. Mat. Iberoam. \textbf{1}
		(1985), no.~2, 45--121.
	
	\bibitem[Neu20]{Neumayer2019}
	R.~Neumayer, \emph{A note on strong-form stability for the {S}obolev
		inequality}, Calc. Var. Partial Differential Equations \textbf{59} (2020),
	no.~25.
	
	\bibitem[Oba71]{Obata1971}
	M.~Obata, \emph{The conjectures on conformal transformations of {R}iemannian
		manifolds}, J. Differential Geom. \textbf{6} (1971), no.~2, 247--258.
	
	\bibitem[Rod66]{Rodemich1966}
	E.~Rodemich, \emph{The {S}obolev inequality with best possible constant},
	Analysis Seminar Caltech (1966).
	
	\bibitem[Sch84]{Schoen1984}
	R.~M.~Schoen, \emph{Conformal deformation of a {R}iemannian metric to
		constant scalar curvature}, J. Differential Geom. \textbf{20} (1984),
	479--495.
	
	\bibitem[STW07]{Sheng2007}
	W.-M.~Sheng, N.~S.~Trudinger, and X.-J.~Wang, \emph{The {Y}amabe problem
		for higher order curvatures}, J. Differential Geom. \textbf{77} (2007),
	no.~3, 515--553.
	
	\bibitem[STW12]{Sheng2012}
	W.-M.~Sheng, N.~S.~Trudinger, and X.-J.~Wang, \emph{The k-{Y}amabe problem}, International Press (2012), 427--457.
	
	\bibitem[SW90]{Stein1990}
	E.~M.~Stein and G.~Weiss, \emph{{Introduction to Fourier Analysis on
			Euclidean Spaces. Sixth Printing}}, Princeton University Press, 1990.
	
	\bibitem[Tal76]{Talenti1976}
	G.~Talenti, \emph{{B}est constant in {S}obolev inequality}, Ann. Mat. Pura
	Appl. \textbf{110} (1976), no.~1, 353--372.
	
	\bibitem[Tru68]{Trudinger1968}
	N.~S.~Trudinger, \emph{Remarks concerning the conformal deformation of
		riemannian structures on compact manifolds}, Ann. Sc. Norm. Super. Pisa -
	Scienze Fisiche e Mat. \textbf{22} (1968), no.~2, 265--274.
	
	\bibitem[Via00]{Viaclovsky2000}
	J.~A.~Viaclovsky, \emph{Conformal geometry, contact geometry, and the
		calculus of variations}, Duke Math. J. \textbf{101} (2000), no.~2, 283--316.
	
	\bibitem[Yam60]{Yamabe1960}
	H.~Yamabe, \emph{On a deformation of {R}iemannian structures on compact
		manifolds}, Osaka Math. J. \textbf{12} (1960), no.~1, 21--37.
	
\end{thebibliography}
\end{document}